\documentclass[oneside, a4paper,reqno]{amsart}
\usepackage{amscd,amssymb,amsmath,graphicx,verbatim,mathrsfs,xcolor}
\usepackage{wasysym}
\usepackage{hyperref}
\usepackage[TS1,OT1,T1]{fontenc}
\usepackage{lscape} %landscape
\usepackage{fullpage} %full page
\usepackage[all]{xy}
\newtheorem{theorem}{Theorem}[section]
\newtheorem{lemma}[theorem]{Lemma}
\newtheorem{corollary}[theorem]{Corollary}
\newtheorem{proposition}[theorem]{Proposition}

\newtheorem*{theorem*}{Theorem}

\theoremstyle{definition}
\newtheorem{definition}[theorem]{Definition}
\newtheorem{notation}[theorem]{Notation}

\newtheorem{example}[theorem]{Example}

\theoremstyle{remark}
\newtheorem{remark}[theorem]{Remark}

%%%%%%%%%%%%%%%%%%%%%%%%%%%%%%%%%%%%%%%%%%%%%%%%%%%%%%%%%
%abbrievations
 
\DeclareMathOperator{\Hom}{Hom}
\DeclareMathOperator{\Ext}{Ext}

\newcommand{\Bcal}{\ensuremath{\mathcal{B}}}

\newcommand{\Dcal}{\ensuremath{\mathcal{D}}}
\newcommand{\Xcal}{\ensuremath{\mathcal{X}}}
\newcommand{\Ycal}{\ensuremath{\mathcal{Y}}}
\newcommand{\Tcal}{\ensuremath{\mathcal{T}}}
\newcommand{\Fcal}{\ensuremath{\mathcal{F}}}
\newcommand{\Ccal}{\ensuremath{\mathcal{C}}}
\newcommand{\Lbb}{\ensuremath{\mathbb{L}}}

\newcommand{\Mcal}{\ensuremath{\mathcal{M}}}

\newcommand{\Wcal}{\ensuremath{\mathcal{W}}}

\newcommand{\Acal}{\ensuremath{\mathcal{A}}}

\newcommand{\Ncal}{\ensuremath{\mathcal{N}}}
\newcommand{\Hcal}{\ensuremath{\mathcal{H}}}
\newcommand{\Scal}{\ensuremath{\mathcal{S}}}

\newcommand{\Ucal}{\ensuremath{\mathcal{U}}}
\newcommand{\Vcal}{\ensuremath{\mathcal{V}}}
\newcommand{\Zcal}{\ensuremath{\mathcal{Z}}}

\newcommand{\Kcal}{\ensuremath{\mathcal{K}}}

%%%

% Author-defined macros

\newcommand{\im}{\mbox{\rm{Im\,}}}

\newcommand{\Ker}[1]{\mbox{\rm{Ker}}\,#1}
\newcommand{\Coker}[1]{\mbox{\rm{Coker}}\,#1}

\newcommand{\Kbb}{\mathbb{K}}
\newcommand{\Z}{\mathbb{Z}}
\newcommand{\Zbb}{\mathbb{Z}}
\newcommand{\Rbb}{\mathbb{R}}

\numberwithin{equation}{section}

\begin{document}
\title{Definability and approximations in triangulated categories}\
\author{Rosanna Laking, Jorge Vit\'oria}
\address{Rosanna Laking,  Dipartimento di Informatica - Settore di Matematica, Universit\`a degli Studi di Verona, Strada le Grazie 15 - Ca' Vignal, I-37134 Verona, Italy} 
\email{rosannalaking@gmail.com}
\urladdr{http://www.math.uni-bonn.de/people/laking/}
\address{Jorge Vit\'oria, Department of Mathematics, City, University of London, Northampton Square, London EC1V 0HB, United Kingdom}
\email{zjorgevitoria@gmail.com}
\urladdr{https://sites.google.com/view/jorgevitoria/}
\begin{abstract}
We give criteria for subcategories of a compactly generated algebraic triangulated category to be precovering or preenveloping. These criteria are formulated in terms of closure conditions involving products, coproducts, directed homotopy colimits and further conditions involving the notion of purity. In particular, we provide sufficient closure conditions for a subcategory of a compactly generated algebraic triangulated category to be a torsion class. Finally we explore applications of the previous results to the theory of recollements.
\end{abstract}
\subjclass[2010]{18C35, 18E30, 18E35, 18E40}
\keywords{precover, preenvelope, definable subcategory, torsion pair, t-structure, recollement}
\thanks{RL acknowledges support from the Max Planck Institute for Mathematics (Bonn). JV acknowledges support from the Engineering and Physical Sciences Research Council of the United Kingdom, grant number EP/N016505/1. The authors thank Lidia Angeleri H\"ugel, Joseph Chuang and Frederik Marks for valuable discussions.
}
\maketitle

\section{Introduction}
Given a subcategory $\Xcal$ of a given category $\Ccal$, it is natural to ask whether every object of $\Ccal$ admits morphisms from $\Xcal$ satisfying a universal property (being terminal) among all morphims from $\Xcal$.  This is the notion of $\Xcal$ being precovering and the notion of preenveloping is dual. For example, modules over a ring always admit projective precovers and injective envelopes; these are universal maps to/from a given module from/to the distinguished classes of projective/injective modules. Approximation theory studies the subcategories $\Xcal$ that can provide such approximation maps to all objects in the category. Some subcategories providing good approximations arise through the notion of torsion pairs and definability.

Torsion pairs in abelian or triangulated categories are special pairs of subcategories, one of them being precovering and the other preenveloping. In triangulated categories such pairs assume particular relevance when they `behave well' with respect to the shift functor: these torsion pairs are called t-structures or co-t-structures.  Areas where these torsion pairs play a particularly important role include silting/tilting theory (e.g. \cite{KV,KN,KY}); the study of derived equivalences in algebra and geometry (e.g. \cite{PV} and references therein); and the study of Bridgeland's stability conditions (\cite{Bri}).

Definable subcategories of a module category (\cite{CB}) have their origins in the model theory of modules and are well-known to have good approximation-theoretic properties. Indeed, they are both precovering and preenveloping (\cite{CPT}).  In fact, a subcategory is precovering if it is closed under coproducts and pure quotients (\cite{HJ}) and it is preenveloping if it is closed under products and pure subobjects (\cite{RS}). The notion of definable subcategories of (compactly generated) triangulated categories has appeared in \cite{Krause2}, and are known to be preenveloping (\cite{AMV3}), but they are generally less well-understood.

For categories of modules over a ring, a subcategory is a torsion class if and only if it is closed under coproducts, quotients and extensions (\cite{D}), and a subcategory is definable if and only if it is closed under products, pure subobjects and pure quotients (\cite{CB} and Theorem \ref{lfp precover}(1)).  In this paper we provide analogous closure conditions for subcategories of triangulated categories that yield good approximation-theoretic properties. However, in triangulated categories all monomorphisms and epimorphisms split, and so we are lacking a useful notion of subobject and quotient object.  We approach this deficit by considering the class of compactly generated algebraic triangulated categories, which include, in particular, derived categories of rings. The assumption that $\Tcal$ is compactly generated allows us to deal with \textit{pure subobjects} and \textit{pure quotient objects}.  We summarise our main results as follows.

\begin{theorem*}
Let $\Tcal$ be a compactly generated algebraic triangulated category and $\Xcal$ a subcategory of $\Tcal$. Then the following statements hold.
\begin{enumerate}
\item If $\Xcal$ is closed under coproducts and pure quotients, then $\Xcal$ is precovering. If $\Xcal$ is closed under products and pure subobjects, then $\Xcal$ is preenveloping;
\item The subcategory $\Xcal$ is definable in $\Tcal$ if and only if it is closed under pure subobjects, products and pure quotients. In particular, definable subcategories are both precovering and preenveloping.
\item If $\Xcal$ is closed under coproducts, extentions and pure quotients, then $\Xcal$ is a torsion class in $\Tcal$.
\end{enumerate}
\end{theorem*}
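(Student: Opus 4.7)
The plan is to establish that $(\Xcal, \Xcal^{\perp})$ is a torsion pair in $\Tcal$, where
\[
\Xcal^{\perp} := \{Y \in \Tcal \mid \Hom_{\Tcal}(X, Y) = 0 \text{ for all } X \in \Xcal\}.
\]
Since $\Hom(\Xcal, \Xcal^{\perp}) = 0$ holds by definition, the only substantive task is to show that every $T \in \Tcal$ fits in a triangle $X \to T \to Y \to X[1]$ with $X \in \Xcal$ and $Y \in \Xcal^{\perp}$.

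By part~(1) of the theorem, $\Xcal$ is precovering, so I would begin with an $\Xcal$-precover $f_0 \colon X_0 \to T$ with cone $Y_0$, and then iterate in order to force the cone to become $\Xcal$-orthogonal. Inductively, given $f_n \colon X_n \to T$ with $X_n \in \Xcal$ and cone $Y_n$, pick an $\Xcal$-precover $p_{n+1} \colon X'_{n+1} \to Y_n$, which again exists by part~(1), and apply the octahedral axiom to the composition $X'_{n+1} \to Y_n \to X_n[1]$ (the second map being the connecting morphism). This yields triangles
\[
X_n \to X_{n+1} \to X'_{n+1} \to X_n[1], \qquad X'_{n+1} \to Y_n \to Y_{n+1} \to X'_{n+1}[1],
\]
together with a lift $f_{n+1} \colon X_{n+1} \to T$ refining $f_n$ and with cone $Y_{n+1}$. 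Extension closure of $\Xcal$ keeps $X_{n+1}$ in $\Xcal$; moreover, the precover property of $p_{n+1}$ ensures that every map $X' \to Y_n$ with $X' \in \Xcal$ factors through $p_{n+1}$ and therefore becomes zero after composition with $Y_n \to Y_{n+1}$. Passing to the sequential homotopy colimit, the telescope triangle exhibits $X := \mathrm{hocolim}_n X_n$ as a pure quotient of $\bigoplus_n X_n$; coproduct closure places the latter in $\Xcal$, and pure-quotient closure then yields $X \in \Xcal$. The induced map completes to a triangle $X \to T \to Y \to X[1]$ with $Y \simeq \mathrm{hocolim}_n Y_n$.

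The main obstacle is to verify that $Y \in \Xcal^{\perp}$, i.e.\ that $\Hom_{\Tcal}(X', Y) = 0$ for every $X' \in \Xcal$. For compact $X'$ in $\Tcal^c \cap \Xcal$ this is routine: $\Hom(X', -)$ commutes with directed homotopy colimits, so any map $X' \to Y$ arises from some $X' \to Y_n$ and is annihilated by construction at the next stage. The delicate point, and where I expect most of the technical work to lie, is to upgrade this vanishing from compact $X'$ to arbitrary $X' \in \Xcal$. Here I plan to exploit the compactly generated algebraic hypothesis: every object of $\Tcal$ admits a pure-projective presentation by coproducts of compact objects, and the sequential homotopy colimit defining $Y$ interacts well with the pure-exact structure on $\Tcal$. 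In this way, each map $X' \to Y$ can be analysed via a pure resolution of $X'$ and reduced to the compact case handled above. Closure of $\Xcal$ under pure quotients plays a dual role: it is needed both to ensure $X \in \Xcal$ and to carry the compact-to-general reduction consistently within $\Xcal$.
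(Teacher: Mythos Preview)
Your iteration-of-precovers construction is a genuinely different route from the paper's, and the step you yourself flag as ``the delicate point'' is a real gap that does not close as you propose. For compact $X'\in\Tcal^c\cap\Xcal$ the argument is fine, but the pure-projective presentation $P\to X'$ you invoke has $P$ a coproduct of compact objects which have no reason to lie in $\Xcal$; the vanishing you established only for compacts \emph{in} $\Xcal$ therefore says nothing about $\Hom_\Tcal(P,Y)$, and the long exact sequence for the pure triangle gives no control over $\Hom_\Tcal(X',Y)$. (If $\Hom_\Tcal(C,Y)=0$ held for every compact $C$, compact generation would force $Y=0$, which is far too strong.) Closure of $\Xcal$ under pure quotients does not help either: it lets you \emph{descend} membership in $\Xcal$ along a pure epimorphism, not approximate an arbitrary $X'\in\Xcal$ by compacts in $\Xcal$. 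Without a smallness hypothesis on the objects of $\Xcal$, or a transfinite small-object argument that you do not set up, there is no mechanism forcing an arbitrary map $X'\to Y=\mathrm{hocolim}\,Y_n$ to factor through some finite stage.

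The paper avoids this difficulty by not iterating in $\Tcal$ at all. It lifts to the locally finitely presented abelian category $\Ccal(\Acal)$: the preimage $\pi^{-1}(\Xcal)$ inherits closure under coproducts, pure quotients and extensions, and in that setting these conditions yield a \emph{covering} class, not merely a precovering one. Wakamatsu's Lemma then upgrades covers to \emph{special} precovers, i.e.\ precovers whose kernel already lies in the $\Ext^1$-orthogonal. Combined with a cofibrant replacement, a single such special precover produces a complete cotorsion pair in the Frobenius exact category $\mathscr{C}$ of cofibrant objects, which by the Saor\'in--\v{S}\v{t}ov\'i\v{c}ek correspondence descends to the desired torsion pair in $\Tcal\simeq\underline{\mathscr{C}}$. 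The conceptual move you are missing is to trade countable iteration inside $\Tcal$ for minimality (covers) plus Wakamatsu in the underlying abelian category, where orthogonality of the cone comes for free in one step.
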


In order to obtain (2) we make reference to a recent characterisation of definable subcategories when $\Tcal$ is the underlying category of a strong and stable derivator (\cite{Laking}): a subcategory is definable if and only if it is closed under products, pure subobjects and directed homotopy colimits.  The assumption that $\Tcal$ is both compactly generated and algebraic means that $\Tcal$ is equivalent to $\Dcal(\Acal)$ for a small differential graded (dg, for short) category $\Acal$ (\cite{Keller}). In particular, this means that $\Tcal$ is equivalent to the homotopy category of a nice model structure and so we may apply results from the setting of strong and stable derivators (\cite{Groth}).  However, it is not clear that the notion of `closure under directed homotopy colimits' is independent of the choice of $\Acal$.  Thus the second part of the theorem above should be seen as a simplification of \cite{Laking} in the case where $\Tcal$ is algebraic, which also relieves us of the choice of derivator.

The results summarised above have direct implications in other lines of research. Indeed, for example, our techniques make explicit a reformulation (already implicit in \cite{Krause2.5}) of the Telescope Conjecture for a compactly generated triangulated category in the case where the category is also algebraic. 

The structure of the paper is as follows. Section \ref{Sec: prelim} consists of some preliminaries concerning approximation theory, compactly generated triangulated categories and derived categories of small dg categories. In Section \ref{sec: compact algebraic} we consider compactly generated algebraic triangulated categories, which can be realised both as the homotopy category of an abelian model category and the stable category of a Frobenius exact category. We recall the background that we require on these topics and then use it to prove some preliminary results on closure conditions for subcategories of such categories. In Section \ref{Sec: Approx} we use those closure conditions to obtain approximation-theoretic properties for certain subcategories. In Section \ref{Sec: tors} we turn to the topic of torsion pairs in triangulated categories and we formulate sufficient conditions for a subcategory to be a torsion class. Finally, Section \ref{Sec: Recoll} discusses applications of the previous sections to recollements and the Telescope Conjecture.\\

\textbf{Notation and conventions.} By `subcategory' of a given category, we mean a `full and strict subcategory'. Therefore, all subcategories considered are determined by the objects lying in them, and we often refer to the subcategory as the class of its objects. Given a category $\Xcal$ and a subcategory $\Scal$ of $\Xcal$, we denote by $\Scal^\perp$ the subcategory of all $X$ in $\Xcal$ such that $\Hom_\Xcal(S,X)=0$, for all $S$ in $\Scal$. If a subcategory $\Ycal$ is contained in $\Scal^\perp$ then we write $\Hom_\Xcal(\Scal,\Ycal)=0$. Given a small category $\Xcal$, we denote by $\mathsf{Mod}(\Xcal)$ the abelian category of contravariant additive functors from $\Xcal$ to the category of abelian groups $\mathsf{Mod}(\Z)$. Finally, given $\Ccal$ a subcategory of an additive category $\Xcal$, we denote by $\mathsf{Add}(\Ccal)$ the subcategory of $\Xcal$ whose objects are summands of direct sums of objects in $\Ccal$.

\section{Preliminaries}\label{Sec: prelim}
In this section we begin with some definitions and statements that we will need later.  In Subsection \ref{subsec: Approx} we briefly recall some known results about approximation theory in locally finitely presented categories. Following this, in Subsection \ref{Sec: comp gen} we define compactly generated triangulated categories and concentrate in particular on the theory of purity therein. Subsection \ref{Sec: Comp gen alg} consists of a short overview of derived categories of small dg categories. 

\subsection{Approximation theory}\label{subsec: Approx}
We begin with the following definitions from approximation theory.
\begin{definition}
A subcategory $\Xcal$ of an additive category $\Acal$ is said to be 
\begin{itemize}
\item \textbf{precovering} if for every object $A$ in $\Acal$ there is $X$ in $\Xcal$ and a morphism $f\colon X\longrightarrow A$ such that $\Hom_\Acal(X^\prime,f)$ is surjective for all $X^\prime$ in $\Xcal$. The map $f$ is then called a $\Xcal$-precover of $A$.
\item \textbf{preenveloping} if for every object $A$ in $\Acal$ there is $X$ in $\Xcal$ and a morphism $f\colon A\longrightarrow X$ such that $\Hom_\Acal(f,X^\prime)$ is surjective for all $X^\prime$ in $\Xcal$. The map $f$ is then called a $\Xcal$-preenvelope of $A$.
\item \textbf{covering} if every object $A$ in $\Acal$ admits a right minimal $\Xcal$-precover (i.e.~a $\Xcal$-precover $f\colon X\longrightarrow~A$ such that any endomorphism $g$ of $X$ with the property that $fg=f$ must be an automorphism).
\item \textbf{enveloping} if every object $A$ in $\Acal$ admits a left minimal $\Xcal$-preenvelope (i.e.~a $\Xcal$-preenvelope $f\colon A\longrightarrow X$ such that any endomorphism $g$ of $X$ with the property that $g f=f$ must be an automorphism).
\item \textbf{coreflective} if the inclusion functor of $\Xcal$ into $\Acal$ admits a right adjoint.
\item \textbf{reflective} if the inclusion functor of $\Xcal$ into $\Acal$ admits a left adjoint.
\item \textbf{bireflective} if the inclusion functor of $\Xcal$ into $\Acal$ admits both a left and a right adjoint.
\end{itemize}
\end{definition}

Applying the counit of the adjunction we have that that any coreflective subcategory is covering and, dually, the unit of the adjunction shows that any reflective subcategory is enveloping.

We will consider sufficiently nice additive categories in which certain closure conditions on a subcategory imply that it is precovering and/or preenveloping. 

\begin{definition}\label{Def: lfp}
Let $\Acal$ be a cocomplete additive category (i.e.~an additive category with all colimits).  An object $A$ in $\Acal$ is \textbf{finitely presented} if $\Hom_\Acal(A,-)$ commutes with directed colimits. The category $\Acal$ is said to be \textbf{locally finitely presented} if there is a set $\Scal$ of finitely presented objects such that every object of $\Acal$ is a directed colimit of objects in $\Scal$.
\end{definition}

As $\Acal$ is a cocomplete additive category, we can also define the  notions of pure subobject and pure quotient.  This definition will be used to express closure conditions in Theorem \ref{lfp precover}.

\begin{definition}
Let $\Acal$ be a cocomplete additive category. A morphism $f\colon X\longrightarrow Y$ is said to be a \textbf{pure monomorphism} (respectively, a \textbf{pure epimorphism}) in $\Acal$ if there is a directed system of split monomorphisms (respectively, split epimorphisms) $(f_i\colon X_i\longrightarrow Y_i)_{i\in I}$ such that $f=\varinjlim_I f_i$. An object $X$ is said to be a \textbf{pure subobject} (respectively, a \textbf{pure quotient}) of $Y$ if there is a pure monomorphism $X\longrightarrow Y$ (respectively, a pure epimorphism $X\longrightarrow Y$).
\end{definition}

The following theorem holds more generally. However, for simplicity, we restrict its scope to the setting in Definition \ref{Def: lfp}.

\begin{theorem}\label{lfp precover}
Let $\Xcal$ be a subcategory of a locally finitely presented additive category $\Acal$. 
\begin{enumerate}
\item If $\Xcal$ is closed under pure quotients, then it is closed under directed colimits if and only if it is closed under coproducts.
\item \cite[Theorem 4]{Krause} If $\Xcal$ is closed under pure quotients and coproducts (or, equivalently, under pure quotients and directed colimits), then $\Xcal$ is precovering.
\item\cite[Proposition 5]{Krause} If $\Acal$ admits products and if $\Xcal$ is closed under pure subobjects and products, then $\Xcal$ is preenveloping.
\end{enumerate}
\end{theorem}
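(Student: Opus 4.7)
The plan is to treat the three parts in turn, noting that parts (2) and (3) are already established in the cited works of Krause, while part (1) is a purely formal observation about directed colimits, coproducts and pure quotients in a locally finitely presented additive category.

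For part (1), I would argue the two implications separately. In one direction, assuming $\Xcal$ is closed under directed colimits, I would use the classical observation that any (possibly infinite) coproduct can be expressed as a directed colimit of its finite subsums: given a family $(X_i)_{i \in I}$ in $\Xcal$, the diagram $F \mapsto \bigoplus_{i \in F} X_i$ indexed by the finite subsets $F \subseteq I$ ordered by inclusion is directed, and its colimit is $\bigoplus_{i \in I} X_i$. Since $\Xcal$ is a full additive subcategory (hence closed under finite direct sums), each $\bigoplus_{i \in F} X_i$ lies in $\Xcal$, and directed colimit closure delivers the coproduct. No appeal to pure quotients is required here.

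For the reverse direction, I would invoke the well-known fact valid in any locally finitely presented additive category: if $(X_i, f_{ji})_{i \in I}$ is a directed system, then the canonical map $\bigoplus_{i \in I} X_i \longrightarrow \varinjlim_I X_i$ is a pure epimorphism. Thus, given closure of $\Xcal$ under coproducts and pure quotients, the coproduct $\bigoplus_{i \in I} X_i$ lies in $\Xcal$, and its directed colimit is a pure quotient of it, hence also in $\Xcal$. This is the step where the pure quotient hypothesis is essential, and it is the only nontrivial input; I anticipate no obstacle beyond citing the standard characterisation of directed colimits as pure quotients of coproducts.

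For parts (2) and (3), I would simply cite \cite[Theorem 4]{Krause} and \cite[Proposition 5]{Krause} respectively, as the statements are quoted verbatim from those works and the ambient hypothesis (local finite presentability, with products in part (3)) is exactly the setting considered there. The role of this theorem in the paper is not to reprove Krause's results but to package them together with part (1) into a convenient reference for later use; so the overall proof amounts essentially to verifying part (1) and invoking Krause for the rest.
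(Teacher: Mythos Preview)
Your proposal is correct and matches the paper's approach exactly: the paper likewise only argues item (1) and defers (2) and (3) to Krause, using precisely the two facts you name (every coproduct is a directed colimit of its finite subsums, and every directed colimit is a pure quotient of the coproduct of its terms). One minor caveat: you assert that $\Xcal$ is a full \emph{additive} subcategory to get closure under finite direct sums, but additivity is not part of the stated hypotheses; the paper's one-line proof glosses over this same point.
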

\begin{proof}
We discuss only item (1).  Every coproduct is a directed colimit and, in a locally finitely presented category, it is easy to see that every directed colimit is a pure quotient of a coproduct.
\end{proof}

\begin{remark}\label{pure q pure s}
As observed in \cite{Krause,AR}, in locally finitely presented additive category, any subcategory closed under directed colimits and pure subobjects is also closed for pure quotients. Hence, if $\Xcal$ is a subcategory of a locally finitely presented category $\Acal$ closed under directed colimits and pure subobjects, then $\Xcal$ is precovering.
\end{remark}

In nice enough categories, we also get that a precovering class closed under directed colimits is, in fact, covering. Once again, taking into account our purposes, we simplify the setting in which the following theorem holds. Recall that a category is said to be \textbf{well-powered} if the subobjects of any given object form a set.

\begin{theorem}[{\cite[Section 7]{Enochs}}]\label{covering}
Let $\Acal$ be a cocomplete and well-powered abelian category and $\Xcal$ a subcategory. If $\Xcal$ is precovering in $\Acal$ and $\Xcal$ is closed under directed colimits, then $\Xcal$ is covering in $\Acal$.
\end{theorem}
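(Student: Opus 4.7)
The plan is to upgrade each $\Xcal$-precover to a right minimal one via an Enochs-style transfinite reduction, using closure under directed colimits at both the successor and limit stages, and using well-poweredness to force termination.

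Fix $A\in\Acal$ and a $\Xcal$-precover $p_0\colon X_0\to A$, which exists by hypothesis. If $p_0$ is already right minimal we are done, so assume otherwise: there is $g\colon X_0\to X_0$ with $p_0g=p_0$ and $g$ not an automorphism. Set
\[X_1:=\varinjlim\bigl(X_0\xrightarrow{g}X_0\xrightarrow{g}X_0\to\cdots\bigr),\]
which lies in $\Xcal$ by closure under directed colimits, and let $p_1\colon X_1\to A$ be the map induced by the relations $p_0 g^n=p_0$. A short diagram chase shows $p_1$ is still a precover: given $h\colon Z\to A$ with $Z\in\Xcal$, the precover property of $p_0$ yields $\tilde h\colon Z\to X_0$ with $p_0\tilde h=h$, and composing $\tilde h$ with the canonical colimit map $X_0\to X_1$ lifts $h$ through $p_1$. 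Moreover, the shift-by-one on the indexing system becomes the identity on the colimit, so the endomorphism of $X_1$ induced by $g$ is the identity; this particular witness of non-minimality has thereby been neutralised.

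Iterating transfinitely produces a chain of precovers $p_\alpha\colon X_\alpha\to A$ with $X_\alpha\in\Xcal$, where at successor stages one applies the reduction above to a fresh endomorphism witnessing non-minimality, and at limit ordinals $\lambda$ one sets $X_\lambda:=\varinjlim_{\alpha<\lambda}X_\alpha$, again inside $\Xcal$. To force termination, one arranges the composite canonical maps $X_0\to X_\alpha$ so that their kernels form an ascending chain of subobjects of $X_0$, equivalently that the $X_\alpha$ correspond to a descending chain of quotients of $X_0$. Since $\Acal$ is well-powered, such a chain is indexed by a set and must stabilise at some ordinal $\alpha^\ast$. At that stage $p_{\alpha^\ast}$ cannot admit any non-invertible endomorphism $g$ with $p_{\alpha^\ast}g=p_{\alpha^\ast}$, as this would produce a further strict reduction, contradicting stabilisation; hence $p_{\alpha^\ast}$ is right minimal, giving the desired $\Xcal$-cover.

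The main obstacle is the termination argument: one must pin down a subobject-valued invariant of the fixed object $X_0$ that strictly decreases at each successor stage and is preserved (or further reduced) at limit stages. Controlling how the kernels and images of the composite canonical maps $X_0\to X_\alpha$ interact with the colimit reductions is the technical heart of Enochs' original argument and is where well-poweredness is indispensable. Once the invariant is set up, the remainder of the proof is a routine transfinite recursion exploiting the universal property of directed colimits together with the hypothesis that $\Xcal$ is closed under them.
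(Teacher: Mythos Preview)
The paper does not supply a proof of this statement; it simply records it as a theorem of Enochs with a citation to \cite[Section 7]{Enochs}. So there is no ``paper's proof'' to compare against, and your sketch should be judged as an attempted reconstruction of Enochs' argument.

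Your outline is the correct strategy---transfinite reduction of a precover using closure under directed colimits at every stage---and you are right that termination via well-poweredness is the crux. However, the specific invariant you propose does not work. You claim the kernels of the composite maps $X_0\to X_\alpha$ form a strictly ascending chain (equivalently, that the $X_\alpha$ are a descending chain of quotients of $X_0$). But if the witness $g\colon X_0\to X_0$ happens to be a monomorphism that is not an epimorphism, then every $g^n$ is mono, so the canonical map $X_0\to X_1=\varinjlim(X_0\xrightarrow{g}X_0\to\cdots)$ has zero kernel and the chain does not move. More generally, the $X_\alpha$ are directed colimits, not quotients of $X_0$, so there is no a priori reason they sit inside a lattice of subobjects or quotients of a fixed object.

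Enochs' original argument is for modules and uses cardinality: one shows that all the $X_\alpha$ can be taken of cardinality at most $|X_0|$, and then a counting argument forces stabilisation. Transporting this to an arbitrary well-powered cocomplete abelian category is genuinely more delicate; one route is to bound the process by the subobject lattice of a \emph{fixed} object (for instance by tracking images of the precovers inside $A$, or by working inside a suitable slice), rather than kernels inside the moving $X_0$. Your final paragraph correctly flags that identifying the right invariant is the technical heart, but the particular candidate you name would need to be replaced before the argument goes through.
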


Examples of cocomplete well-powered abelian categories abound. In particular, any Grothendieck category satisfies these properties (see, for example, \cite[Chapter~IV, Proposition~6.6]{Stenstrom}).

\begin{corollary}\label{cor covering}
Let $\Acal$ be a locally finitely presented abelian category and $\Xcal$ a subcategory. If $\Xcal$ is closed under pure quotients and coproducts, then it is covering.
\end{corollary}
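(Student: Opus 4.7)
The plan is to assemble the corollary by combining the three results already on the page. First, by Theorem \ref{lfp precover}(2), the closure of $\Xcal$ under coproducts and pure quotients implies that $\Xcal$ is precovering. Next, by Theorem \ref{lfp precover}(1), the same closure hypotheses give that $\Xcal$ is closed under directed colimits. Once these two conclusions are in hand, Theorem \ref{covering} delivers that $\Xcal$ is in fact covering, provided its ambient category $\Acal$ is cocomplete, well-powered and abelian.

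The remaining task is therefore to verify those structural hypotheses on $\Acal$. Since $\Acal$ is assumed to be abelian (hence has finite colimits) and locally finitely presented (hence has directed colimits), it is cocomplete. For well-poweredness, the standard fact to invoke is that a locally finitely presented abelian category is a Grothendieck category: the coproduct of (representatives of isomorphism classes of) the finitely presented objects provides a generator, directed colimits are exact in any locally finitely presented abelian category, and well-poweredness of any Grothendieck category is the content of the reference already cited in the excerpt after Theorem \ref{covering}, namely \cite[Chapter~IV, Proposition~6.6]{Stenstrom}.

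Putting these pieces together yields the corollary with essentially no calculation; it is a purely formal consequence of the three theorems recalled in this subsection. I do not anticipate any genuine obstacle, the only mildly delicate point being the justification that a locally finitely presented abelian category is automatically Grothendieck, which is where one must pause to note the generator coming from the set of finitely presented objects and the fact that in this setting directed colimits are exact.
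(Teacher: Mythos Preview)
Your proposal is correct and follows essentially the same approach as the paper: combine Theorem~\ref{lfp precover} with Theorem~\ref{covering}, having first checked that a locally finitely presented abelian category is Grothendieck (hence well-powered). The only difference is that the paper outsources the Grothendieck claim to \cite[Section~2.4]{CBlfp} rather than sketching it, but the logical structure is identical.
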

\begin{proof}
By \cite[Section~2.4]{CBlfp}, any locally finitely presented abelian category is a Grothendieck category and therefore well-powered.  This is then a direct combination of Theorem \ref{lfp precover} and Theorem \ref{covering}.
\end{proof}

If $\Tcal$ is a triangulated category, there are also some sufficient conditions guaranteeing that certain precovering (respectively, preenveloping) subcategories are covering (respectively, enveloping). 

\begin{definition} A subcategory $\Ucal$ of $\Tcal$ is \textbf{suspended} (respectively, \textbf{cosuspended}) if it is closed under summands and extensions, and $\Ucal[1]$ is contained in $\Ucal$ (respectively, $\Ucal[-1]$ is contained in $\Ucal$).  Moreover, a subcategory $\Ucal$ is \textbf{triangulated} if it is both suspended and cosuspended. \end{definition}

The following theorem is a direct consequence of the arguments presented in \cite[Proposition 1.4]{Neeman} (see also Theorem \ref{KV}).

\begin{theorem}[{\cite[Proposition 1.4]{Neeman}}]\label{Neeman}
Let $\Ucal$ be a subcategory of a triangulated category $\Tcal$. 
\begin{enumerate}
\item If $\Ucal$ is suspended, then $\Ucal$ is precovering if and only if $\Ucal$ is covering, and if and only if  $\Ucal$ is coreflective. 
\item If $\Ucal$ is cosuspended, then $\Ucal$ is preenveloping if and only if $\Ucal$ is enveloping, and if and only if  $\Ucal$ is reflective.
\item If $\Ucal$ is a triangulated subcategory, then $\Ucal$ is both precovering and preenveloping if and only if $\Ucal$ is both covering and enveloping, and if and only if $\Ucal$ is bireflective.
\end{enumerate}
\end{theorem}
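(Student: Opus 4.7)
The plan is to prove (1), obtain (2) by a completely dual argument, and then deduce (3) by combining the two. Within (1) the three conditions form a cycle with one trivial link (covering $\Rightarrow$ precovering) and two non-trivial links. I would first handle coreflective $\Rightarrow$ covering: writing $R\colon \Tcal \to \Ucal$ for the right adjoint to the inclusion $i$, the counit $\epsilon_T\colon iRT \to T$ is a precover by the adjunction's universal property, and an endomorphism $g$ of $RT$ with $\epsilon_T \circ i(g) = \epsilon_T$ must correspond under the adjunction to the identity on $RT$, forcing $g$ to be an automorphism. Hence $\epsilon_T$ is minimal.

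The heart of the argument is precovering $\Rightarrow$ coreflective. Given $T \in \Tcal$ and a precover $f\colon U \to T$, I would complete $f$ to a triangle
\[
U \xrightarrow{f} T \to V \to U[1]
\]
and aim to show $V \in \Ucal^\perp$. For $U' \in \Ucal$, applying $\Hom_{\Tcal}(U',-)$ gives a long exact sequence in which the map $\Hom_{\Tcal}(U',U) \to \Hom_{\Tcal}(U',T)$ is surjective (by the precover property) and $U[1] \in \Ucal$ (by the suspended hypothesis); this already produces an injection of $\Hom_{\Tcal}(U',V)$ into the kernel of $\Hom_{\Tcal}(U',f[1])$. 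The suspended hypothesis is then used in an iterated/octahedral refinement of $f$, following the strategy of \cite[Proposition~1.4]{Neeman}, to kill the remaining obstruction and conclude that $V \in \Ucal^\perp$. Once this is established, $T \mapsto U$ becomes functorial: the universal property of the torsion-type triangle makes the assignment into a right adjoint to $i$, and so $\Ucal$ is coreflective.

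Part (2) follows by systematically dualising: replace precovers by preenvelopes, the suspended hypothesis by cosuspended, right adjoints by left adjoints, and cones by cocones. For (3), the assumption that $\Ucal$ is triangulated means exactly that it is both suspended and cosuspended, so the equivalences of (1) and (2) both apply. Bireflective coincides with being simultaneously reflective and coreflective, and the counit/unit of the two adjunctions give minimal precovers and preenvelopes, so bireflective $\Leftrightarrow$ (covering \textbf{and} enveloping) $\Leftrightarrow$ (precovering \textbf{and} preenveloping).

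The main obstacle I anticipate is the verification that the cone $V$ of a precover lies in $\Ucal^\perp$ using only the suspended hypothesis. The direct Hom long exact sequence produces only an injection $\Hom_{\Tcal}(U',V) \hookrightarrow \Hom_{\Tcal}(U',U[1])$ whose image need not vanish, so one cannot simply read off $V \in \Ucal^\perp$ from a single precover. The genuine input is the iterative construction available in Neeman's argument, which exploits the closure $\Ucal[1] \subseteq \Ucal$ to improve successive precovers by octahedral manipulations until the obstruction is eliminated; everything else in the statement reduces to standard adjunction calculus once this step is in hand.
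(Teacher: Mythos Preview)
The paper does not give its own proof of this statement; it only remarks that the theorem is a direct consequence of the arguments in \cite[Proposition~1.4]{Neeman} and points to Theorem~\ref{KV}. Your outline is consistent with that treatment: you handle the elementary implications explicitly (coreflective $\Rightarrow$ covering via the counit, covering $\Rightarrow$ precovering trivially, and the dual and combined versions for (2) and (3)) and, like the paper, defer the substantive step---precovering $\Rightarrow$ coreflective for a suspended $\Ucal$---to Neeman's argument, correctly flagging that the long exact sequence alone does not force the cone of a precover into $\Ucal^\perp$.
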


\subsection{Compactly generated triangulated categories}\label{Sec: comp gen}
We will consider triangulated categories that have a nice set of generators.
\begin{definition}
If $\Tcal$ is a triangulated category with coproducts, then
\begin{itemize}
\item an object $X$ in $\Tcal$ is \textbf{compact} if $\Hom_\Tcal(X,-)$ commutes with direct sums; 
\item $\Tcal$  is said to be \textbf{compactly generated} if the subcategory of compact objects, denoted by $\Tcal^c$, is skeletally small and generates $\Tcal$ (i.e.~if $\Hom_\Tcal(C,X)=0$ for all $C$ compact, then $X\cong 0$).
\end{itemize}
\end{definition}

Compactly generated triangulated categories admit a \textit{pure-exact structure}, which we discuss in the following. We consider the category $\mathsf{Mod}(\Tcal^c)$ of additive functors $(\Tcal^c)^{op}\longrightarrow \mathsf{Mod}(\mathbb{Z})$. This is an abelian category with directed colimits and, in fact, it is a locally finitely presented (even locally coherent) Grothendieck category. We denote by
$\mathbf{y}\colon \Tcal\longrightarrow \mathsf{Mod}(\Tcal^c)$ the functor sending an object $X$ in $\Tcal$ to the functor $\textbf{y}X:=\Hom_\Ccal(-,X)|_{\Tcal^c}$.

\begin{definition}
A triangle in a compactly generated triangulated category $\Tcal$
\begin{equation}\nonumber
\Delta\colon\xymatrix{L\ar[r]^f&M\ar[r]^g&N\ar[r]& L[1]}
\end{equation}
is said to be a \textbf{pure triangle} if $\mathbf{y}(\Delta)$ is a short exact sequence in $\mathsf{Mod}(\Tcal^c)$. In that case we say that $f$ is a \textbf{pure monomorphism}, $g$ is a \textbf{pure epimorphism}, $L$ is a \textbf{pure subobject} of $M$ and $N$ is a \textbf{pure quotient} of $M$. Moreover, an object $L$ is \textbf{pure-injective} if every pure triangle of the form $\Delta$ splits. An object $N$ is \textbf{pure-projective} if every pure triangle of the form $\Delta$ splits.
\end{definition}

It is well-known (see, for example, \cite[Sections 8 and 11]{Bel}) that the subcategory of pure-projective objects in a compactly generated triangulated category coincides with the additive closure of the compact objects $\mathsf{Add}(\Tcal^c)$. The following lemma provides a useful method to construct pure triangles.

\begin{lemma}[{\cite[Lemma 2.8]{Krause2}}]\label{pure mono}
Let $\Tcal$ be compactly generated. A triangle $\Delta$ in $\Tcal$ is pure if and only if there is a directed system of split triangles $(\Delta_i)_{i\in I}$ such that the short exact sequence $\mathbf{y}\Delta$ coincides with the short exact sequence $\varinjlim_{I} \mathbf{y}\Delta_i$.
\end{lemma}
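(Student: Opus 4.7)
The plan is to work with the restricted Yoneda functor $\mathbf{y}\colon \Tcal \to \mathsf{Mod}(\Tcal^c)$ and to exploit that $\mathsf{Mod}(\Tcal^c)$ is a Grothendieck category in which filtered colimits are exact and commute with finite limits. The backward direction is then almost immediate: since each $\mathbf{y}\Delta_i$ is split and in particular short exact, the colimit $\mathbf{y}\Delta = \varinjlim_{I} \mathbf{y}\Delta_i$ is short exact by exactness of filtered colimits, so $\Delta$ is pure.

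For the forward direction, write $\Delta = (L \xrightarrow{f} M \xrightarrow{g} N \xrightarrow{h} L[1])$ and suppose it is pure. The first observation I would make is that the long exact sequence obtained by applying $\mathbf{y}$ collapses to the short exact sequence $\mathbf{y}\Delta$, forcing $\mathbf{y}h = 0$; equivalently, $h\gamma = 0$ for every compact $C$ and every morphism $\gamma \colon C \to N$. For each such $\gamma$ I would form the pullback of $\Delta$ along $\gamma$, obtaining a triangle
\begin{align*}
\Delta_\gamma \colon L \to P_\gamma \to C \xrightarrow{h\gamma} L[1]
\end{align*}
together with a morphism of triangles $\Delta_\gamma \to \Delta$ that is the identity on $L$ and $\gamma$ on the right-hand vertex. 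Because $h\gamma = 0$, the triangle $\Delta_\gamma$ is split and $P_\gamma \cong L \oplus C$. Indexing these over the slice category $\Tcal^c/N$ — which is filtered thanks to the existence of cones in $\Tcal^c$ and which realises the canonical presentation $\mathbf{y}N \cong \varinjlim_{\Tcal^c/N} \mathbf{y}C$ of the flat functor $\mathbf{y}N$ — produces the candidate directed system of split triangles.

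To verify that $\varinjlim_\gamma \mathbf{y}\Delta_\gamma = \mathbf{y}\Delta$, I would proceed termwise. The $L$-components are constant with filtered colimit $\mathbf{y}L$ and the $C$-components have colimit $\mathbf{y}N$ by construction. For the middle terms, the morphism of triangles induces a canonical comparison $\mathbf{y}P_\gamma \to \mathbf{y}M \times_{\mathbf{y}N} \mathbf{y}C$ in $\mathsf{Mod}(\Tcal^c)$, and this is an isomorphism because both $\mathbf{y}P_\gamma$ and the pullback fit into split short exact sequences with the same end terms $\mathbf{y}L$ and $\mathbf{y}C$. Using that filtered colimits commute with finite limits in $\mathsf{Mod}(\Tcal^c)$,
\begin{align*}
\varinjlim_\gamma \mathbf{y}P_\gamma \cong \mathbf{y}M \times_{\mathbf{y}N} \varinjlim_\gamma \mathbf{y}C \cong \mathbf{y}M \times_{\mathbf{y}N} \mathbf{y}N \cong \mathbf{y}M,
\end{align*}
which furnishes the middle term of $\mathbf{y}\Delta$ and completes the identification.

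The hard part is the non-canonicity of cones in a triangulated category: the assignment $\gamma \mapsto \Delta_\gamma$ is not strictly functorial, so the $\Delta_\gamma$ do not automatically assemble into a diagram in $\Tcal$. I expect to handle this by fixing compatible representatives along the filtered diagram $\Tcal^c/N$ (an inductive choice suffices using filteredness, and an enhancement argument would strictify this in the algebraic setting) and by observing that, after applying $\mathbf{y}$, the resulting diagram in the abelian category $\mathsf{Mod}(\Tcal^c)$ is strictly functorial with pullback unique up to unique isomorphism, so the colimit computation above is independent of the chosen representatives.
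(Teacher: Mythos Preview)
The paper does not give its own proof of this lemma; it is quoted verbatim from \cite[Lemma~2.8]{Krause2} and used as a black box. So there is no in-paper argument to compare your attempt against.

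On the substance of your attempt: the backward direction is correct and immediate. Your forward-direction strategy --- pulling $\Delta$ back along the canonical system $\gamma\colon C\to N$ indexed by $\Tcal^c/N$, observing that $h\gamma=0$ forces each $\Delta_\gamma$ to split, and then computing the colimit in $\mathsf{Mod}(\Tcal^c)$ via commutation of filtered colimits with the pullback $\mathbf{y}M\times_{\mathbf{y}N}(-)$ --- is essentially the argument Krause gives. You are also right that the only genuine difficulty is assembling the $\Delta_\gamma$ into an honest directed system of triangles in~$\Tcal$.

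Where your write-up falls short is in the resolution of that difficulty. ``An inductive choice suffices using filteredness'' is not an argument: coherence over an arbitrary filtered category cannot be arranged by naive induction, and there is no transfinite scaffold available here. Appealing to an enhancement is also unsatisfactory, since the lemma is stated for any compactly generated $\Tcal$, not just algebraic ones. The clean fix is to note that once you choose splittings $P_\gamma\cong L\oplus C$, the transition map dictated by the pullback in $\mathsf{Mod}(\Tcal^c)$ is an upper-triangular matrix $\bigl(\begin{smallmatrix}1&\ast\\0&\alpha\end{smallmatrix}\bigr)$ with $\ast\in\Hom(\mathbf{y}C,\mathbf{y}L)$ and $\alpha\in\Hom(\mathbf{y}C,\mathbf{y}C')$; both of these Hom-sets are in the image of $\mathbf{y}$ \emph{bijectively} by Yoneda (the source is representable), so the transition maps lift uniquely to $\Tcal$ and therefore compose strictly. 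That replaces your hand-wave with an actual argument.
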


This lemma shows that indeed the notions of pure triangle in $\Tcal$ and pure-exact sequence in the locally finitely presented category $\mathsf{Mod}(\Tcal^c)$ are naturally compatible. Note that, as a consequence of the lemma above, a morphism $f$ in $\Tcal$ is a pure monomorphism if and only if $\mathbf{y}f$ is a pure monomorphism in $\mathsf{Mod}(\Tcal^c)$. Dually, it also follows that a morphism $g$ in $\Tcal$ is a pure epimorphism if and only if $\mathbf{y}g$ is a pure epimorphism in $\mathsf{Mod}(\Tcal^c)$.

\subsection{Derived categories of dg categories}\label{Sec: Comp gen alg}
Among all compactly generated triangulated categories, we will be particularly interested in those that are equivalent to the derived category $\Dcal(\Acal)$ of a small dg category $\Acal$.  If $\Kbb$ is a commutative ring, then a $\Kbb$-linear category $\Acal$ is a \textbf{dg category} if it is enriched over the category $\Ccal(\Kbb)$ of chain complexes over $\Kbb$.  Since we will not directly make use of this enriched structure, we refer the reader to the following example for an intuitive idea of what this means and to sources such as \cite[Section~6]{Krause3} and \cite{Keller} for a detailed account.  

\begin{example}
Let $\Kbb$ be a commutative ring.  We will describe a dg category $\Ccal_{dg}(\Kbb)$ with the same objects as $\Ccal(\Kbb)$.  For any pair $X, Y$ of objects in $\Ccal(\Kbb)$ we must define a chain complex $\mathcal{H}om(X, Y)$.  For $n \in \Zbb$, we take \[\mathcal{H}om(X,Y)^n := \prod_{p\in\Zbb} \mathrm{Hom}_\Kbb(X^p, Y^{p+n})\] and the differential is defined to be $d^n(f^p) := d_Y \circ f^p - (-1)^nf^{p+1}\circ d_X$ where $f^p$ is in $\mathrm{Hom}_\Kbb(X^p, Y^{p+n})$.
\end{example}

The derived category $\Dcal(\Acal)$ is constructed as a localisation of an abelian category $\Ccal(\Acal)$, which is a generalisation of the category of chain complexes over a ring.  For the details of the definition of $\Ccal(\Acal)$ we refer to \cite[Section~2.1]{Keller}.  The category $\Dcal(\Acal)$ is obtained from $\Ccal(\Acal)$ by formally inverting a class $\Wcal$ of morphisms in $\Ccal(\Acal)$ called quasi-isomorphisms.  As one might expect, the morphisms in $\Wcal$ generalise the quasi-isomorphisms in the category of chain complexes over a ring; an exact definition may be found in \cite[Section~4.1]{Keller}. 

\section{Compactly generated algebraic triangulated categories}\label{sec: compact algebraic}

A famous theorem of Keller (\cite[Subsection 4.1 and Theorem 4.3]{Keller}) states that the class of derived categories of small dg categories coincides with the class of stable categories of Frobenius exact categories that are compactly generated. These categories are called compactly generated algebraic triangulated categories and, in this section, we explore their properties from these two distinct points of view. 

The process of formally inverting the quasi-isomorphisms in $\Ccal(\Acal)$ for a small dg category $\Acal$ may be done within the framework of model categories and we present this point of view in Subsection \ref{Sec: Model}. The category $\Dcal(\Acal)$ may also be realised as the stable category of a Frobenius exact category and there is a close relationship between the torsion pairs in $\Dcal(\Acal)$ and the cotorsion pairs in the Frobenius exact category.  More details of this construction are given in Subsection \ref{Sec: Frob ex}.   It is the interplay between these two structures underlying the category $\Dcal(\Acal)$ that will allow us to develop approximation theory in the later sections.

\subsection{Compactly generated algebraic triangulated categories as homotopy categories}\label{Sec: Model}
One way of seeing that the localisation of $\Ccal(\Acal)$ at $\Wcal$ exists is to observe that there is a model structure on $\Ccal(\Acal)$ such that the quasi-isomorphisms coincide with the weak equivalences. In particular, we will consider the so-called \textbf{projective model structure} on $\Ccal(\Acal)$ described in the following proposition.
We refer the reader to \cite{Hir,Hovey} for more details on the theory of model categories.  Given a model category $\Mcal$ we will denote its homotopy category by $\textrm{Ho}(\Mcal)$.  For a proof of the following well-known proposition, see \cite[Proposition 1.3.5]{Becker} and references therein.

\begin{proposition}\label{model in DG}
Let $\Acal$ be a small dg category.  There exists a model structure on $\Ccal(\Acal)$ such that the weak equivalences are the quasi-isomorphisms and every object is fibrant and it is uniquely determined by these properties. The homotopy category of this model structure is equivalent to $\Dcal(\Acal)$.
\end{proposition}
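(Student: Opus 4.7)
The plan is to construct the projective model structure on $\Ccal(\Acal)$ explicitly, verify the stated properties, and finally address uniqueness by a rigidity argument. Adapting Hovey's construction for complexes of modules over a ring to the dg-categorical setting, I would define the fibrations to be the morphisms $f\colon X\to Y$ which are componentwise surjective, meaning that for every object $A$ of $\Acal$ and every $n\in\Zbb$, the morphism $f(A)^n\colon X(A)^n \to Y(A)^n$ is surjective; take the weak equivalences to be the quasi-isomorphisms; and take the cofibrations to be those morphisms with the left lifting property against all trivial fibrations.

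I would then verify the model category axioms. The 2-out-of-3 property and closure under retracts are immediate. For the two factorisation axioms, I would apply Quillen's small object argument to explicit generating sets: for cofibrations, the maps coming from the inclusions of spheres into disks built on the representable dg $\Acal$-modules $\Acal(-,A)$, and for trivial cofibrations an analogous family indexed so as to detect acyclicity. The verification that trivial cofibrations coincide with cofibrations that are weak equivalences, and the identification of the weak equivalences with quasi-isomorphisms, follow from the usual computations; for the precise bookkeeping in the dg case I would cite Keller \cite{Keller} and Becker. The assertion that every object is fibrant is then automatic, since any morphism to the zero object is componentwise surjective.

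For uniqueness, suppose $\mathsf{M}_1$ and $\mathsf{M}_2$ are two model structures on $\Ccal(\Acal)$ whose weak equivalences are the quasi-isomorphisms and in which every object is fibrant. In any model category, a morphism between fibrant objects is a fibration if and only if it has the right lifting property with respect to all trivial cofibrations; since in both structures all objects are fibrant, this characterises all fibrations purely in terms of the trivial cofibrations. The trivial cofibrations, in turn, can be intrinsically described (in the presence of enough cofibrant replacements provided by the small object argument) as the weak equivalences that admit the left lifting property against fibrations between fibrant objects. Invoking such a rigidity result (e.g.~in the cofibrantly generated setting where the generating trivial cofibrations are forced to coincide) one concludes that $\mathsf{M}_1$ and $\mathsf{M}_2$ have the same fibrations, and therefore the same cofibrations, and agree.

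The last sentence of the proposition is then immediate: the homotopy category of any model category is its localisation at the weak equivalences, and here those are precisely the quasi-isomorphisms, whose localisation is by definition $\Dcal(\Acal)$ as recalled in Subsection \ref{Sec: Comp gen alg}. The main obstacle is the uniqueness clause: existence is routine once one knows Hovey's argument, and the identification of the homotopy category is formal, but uniqueness is subtler and is most transparently handled by appealing to a general rigidity result for model structures with a prescribed class of weak equivalences and a prescribed class of fibrant objects.
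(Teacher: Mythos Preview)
The paper does not actually prove this proposition: it simply refers the reader to \cite[Proposition 1.3.5]{Becker} and the references therein. So there is no ``paper's own proof'' to compare against, only the cited literature. Your existence argument is the standard one and is fine in outline; the identification of the homotopy category is indeed formal.

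The difficulty is the uniqueness clause, and here your argument has a genuine gap. You write that ``trivial cofibrations \dots\ can be intrinsically described \dots\ as the weak equivalences that admit the left lifting property against fibrations between fibrant objects'', and then use this to pin down the fibrations. But this is circular: you are characterising the trivial cofibrations in terms of the fibrations while simultaneously trying to determine the fibrations from the trivial cofibrations. The parenthetical appeal to a ``rigidity result'' in the cofibrantly generated setting does not help either, since you are comparing two \emph{arbitrary} model structures $\mathsf{M}_1,\mathsf{M}_2$, with no a priori reason for either to be cofibrantly generated. More fundamentally, the general principle you invoke --- that a model structure is determined by its weak equivalences together with its class of fibrant objects --- is not one of the standard determination results. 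The standard statements are that a model structure is determined by any two of the three classes, or (Joyal) by its cofibrations together with its fibrant objects; ``weak equivalences plus fibrant objects'' is not among these, and you would need to supply a proof. Concretely, from ``all objects fibrant'' one can deduce that every trivial cofibration is a split monomorphism $A\hookrightarrow A\oplus C$ with $C$ acyclic, but the converse requires $C$ to be \emph{cofibrant}, which is exactly the model-structure-dependent datum you are trying to determine.

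The argument in Becker proceeds differently: one works within the framework of \emph{abelian} model structures and Hovey's correspondence with cotorsion pairs. Requiring every object to be fibrant forces the Hovey triple to be $(\mathscr{C},\Wcal_0,\Ccal(\Acal))$, and then the two associated cotorsion pairs $(\mathscr{C}\cap\Wcal_0,\Ccal(\Acal))$ and $(\mathscr{C},\Wcal_0)$ pin down $\mathscr{C}$ uniquely as ${}^{\perp_1}\Wcal_0$. If you want a self-contained uniqueness proof, this is the route to take.
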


\begin{notation}
We will fix the following notation with respect to the projective model structure.  \begin{itemize}
\item We denote the associated localisation functor by $\pi \colon \Ccal(\Acal) \rightarrow \Dcal(\Acal)$.
\item We will denote the class of weak equivalences (i.e.~quasi-isomorphisms) by $\Wcal$.
\item We will denote the class of cofibrant objects by $\mathscr{C}$ (in \cite{Keller} Keller refers to the objects in $\mathscr{C}$ as \textit{objects with property (P)}).
\end{itemize}
\end{notation}

By \cite[Lemma~2.12]{StPo}, the category $\Ccal(\Acal)$ is a locally finitely presented Grothendieck category with enough projective objects.  In Section \ref{Sec: Approx} we will prove an analogue of Theorem \ref{lfp precover} for $\Dcal(\Acal)$. The strategy will be to determine which subcategories $\Xcal$ of $\Dcal(\Acal)$ have a preimage $\pi^{-1}(\Xcal)$ in $\Ccal(\Acal)$ satisfying the closure conditions specified in Theorem \ref{lfp precover}. We, therefore, need to discuss the products, coproducts and directed colimits (or suitable replacements of these) in both $\Ccal(\Acal)$ and $\Dcal(\Acal)$.

It is well-known (\cite[Proposition A.2.8.2]{Lurie}) that the model structure in $\Ccal(\Acal)$ described in the proposition above induces model structures in the category of functors from a small category $S$ to $\Ccal(\Acal)$. We denote this category, usually called the category of $S$-diagrams in $\Ccal(\Acal)$, by $\Ccal(\Acal)^S$ and the corresponding localisation functor by $\pi_S\colon \Ccal(\Acal)^S\longrightarrow \textrm{Ho}(\Ccal(\Acal)^S)$.

\begin{remark} It is easy to see that if $S$ is a discrete category (i.e.~if the only morphisms of $S$ are identities on objects), then $\Ccal(\Acal)^S$ is nothing but the product, indexed by $S$, of copies of the category $\Ccal(\Acal)$. In that case it then follows that the homotopy category for the induced model structure is nothing but the product of the respective homotopy categories, i.e.~$\textrm{Ho}(\Ccal(\Acal)^S)\simeq\textrm{Ho}(\Ccal(\Acal))^S\simeq \Dcal(\Acal)^S$. However, this equivalence does not generally hold for small categories which are not discrete.
\end{remark}

We recall that if $F \colon \Mcal \rightarrow \Ncal$ is a functor between model categories, then the total left derived functor (respectively, the total right derived functor) is a functor $\Lbb F \colon \mathrm{Ho}(\Mcal) \rightarrow \mathrm{Ho}(\Ncal)$ (respectively, $\Rbb F\colon \mathrm{Ho}(\Mcal) \rightarrow \mathrm{Ho}(\Ncal)$) - see \cite[Section~8.4]{Hir} for the definition of total derived functors. We  consider the following total derived functors:

\begin{itemize}
\item For each directed small category $S$ (i.e.~the set of objects of $S$ is preordered by its morphisms and every finite subset has an upper bound), we consider the total left derived functor of the directed colimit functor $\varinjlim_S \colon \Ccal(\Acal)^S \rightarrow \Ccal(\Acal)$, and we denote it by $\mathbb{L}\varinjlim_S\colon \mathrm{Ho}(\Ccal(\Acal)^S) \rightarrow \mathrm{Ho}(\Ccal(\Acal))\simeq\Dcal(\Acal) $. This derived functor turns out to be left adjoint to the natural diagonal (or constant) functor $\delta_S\colon \textrm{Ho}(\Ccal(\Acal))\simeq \Dcal(\Acal)\longrightarrow \textrm{Ho}(\Ccal(\Acal)^S)$ and we will, therefore, refer to $\Lbb\varinjlim_S$ as the \textbf{directed homotopy colimit functor}.
\item For each discrete category $S$, we consider the total right derived functor of the product functor $\prod \colon \Ccal(\Acal)^S \rightarrow \Ccal(\Acal)$, which yields a functor $\Rbb\prod \colon \Dcal(\Acal)^S \rightarrow \Dcal(\Acal)$. This derived functor is right adjoint to the natural diagonal functor $\delta_S\colon \textrm{Ho}(\Ccal(\Acal))\simeq \Dcal(\Acal)\longrightarrow \textrm{Ho}(\Ccal(\Acal)^S)\cong \Dcal(\Acal)^S$ and, therefore, it coincides with the product functor in $\Dcal(\Acal)$ (\cite[Example~1.3.11]{Hovey}).
\item For each discrete category $S$, we consider the total left derived functor of the coproduct functor $\coprod \colon \Ccal(\Acal)^S \rightarrow \Ccal(\Acal)$, which yields a functor $\Lbb\coprod \colon \Dcal(\Acal)^S \rightarrow \Dcal(\Acal)$.  This derived functor is left adjoint to the natural diagonal functor $\delta_S\colon \textrm{Ho}(\Ccal(\Acal))\simeq \Dcal(\Acal)\longrightarrow \textrm{Ho}(\Ccal(\Acal)^S)\cong \Dcal(\Acal)^S$ and, therefore, it coincides with the coproduct functor in $\Dcal(\Acal)$ (\cite[Example~1.3.11]{Hovey}).
\end{itemize}

In the same vein as \cite[Lemma~7.1(4)]{SSV} we observe that the exactness of (co)products and directed colimits in $\Ccal(\Acal)$ allow for an easy computation of (co)products and directed homotopy  colimits in $\Dcal(\Acal)$.

\begin{proposition}\label{prop: derived functors} Let $\Acal$ be a small dg category and let $S$ be a small category.
\begin{enumerate}
\item If $S$ is directed and $X$ is an object of $\Ccal(\Acal)^S$, then we have $\mathbb{L}\varinjlim_S \pi_SX \cong \pi\left(\varinjlim_S X\right)$.
\item If $S$ is discrete and $X = (X_s)_{s\in S}$ is an object of $\Ccal(\Acal)^S$, then we have $\prod_{s\in S}\pi X_s \cong \pi\left( \prod_{s\in S} X_s \right)$.
\item If $S$ is discrete and $X = (X_s)_{s\in S}$ is an object of $\Ccal(\Acal)^S$, then we have $\coprod_{s\in S} \pi X_s \cong \pi\left( \coprod_{s\in S} X_s \right)$.
\end{enumerate}
\end{proposition}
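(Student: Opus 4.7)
The plan is to exploit the fact that each of the three functors --- directed colimit, product, and coproduct --- is already exact on the abelian category $\Ccal(\Acal)^S$, hence preserves quasi-isomorphisms, and therefore no actual (co)fibrant replacement is required to compute the total derived functor. Once this is in hand, the claimed isomorphisms follow from the universal property of the derived functor.

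First I would record the exactness of the relevant functors on $\Ccal(\Acal)^S$. Since $\Ccal(\Acal)$ is a Grothendieck category by \cite[Lemma~2.12]{StPo}, axioms (AB5) and (AB4) yield the exactness of directed colimits (for directed $S$) and of coproducts (for discrete $S$). For products, the Grothendieck property alone is not enough, but products in $\Ccal(\Acal)$ are computed componentwise at the level of complexes of abelian groups, where products are manifestly exact; hence $\prod\colon \Ccal(\Acal)^S \longrightarrow \Ccal(\Acal)$ is exact for discrete $S$ as well.

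Next I would note that any exact functor between abelian categories of complexes preserves quasi-isomorphisms, since it commutes with the formation of cohomology. Consequently each of the three functors above sends weak equivalences in the projective model structure on $\Ccal(\Acal)^S$ to weak equivalences in that of $\Ccal(\Acal)$. I would then invoke the general principle that if $F\colon \Mcal\longrightarrow \Ncal$ is a functor between model categories which preserves all weak equivalences, then its total left (respectively right) derived functor satisfies $\Lbb F\circ \pi_{\Mcal} \cong \pi_{\Ncal}\circ F$: for a cofibrant replacement $QX\to X$ the definition gives $\Lbb F(\pi_{\Mcal} X) \cong \pi_{\Ncal} F(QX)$, and $F(QX)\to F(X)$ is a weak equivalence, so $\pi_{\Ncal} F(QX)\cong \pi_{\Ncal} F(X)$. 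The dual argument handles total right derived functors. Applying this principle to the three exact functors gives statements (1), (2) and (3).

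The most subtle point is justifying the exactness of products in $\Ccal(\Acal)$: Grothendieck categories do not in general satisfy (AB4*), so one must make use of the concrete componentwise description of dg modules over $\Acal$ rather than an abstract categorical property. Once this hurdle is cleared, the remainder of the argument is a routine application of the preservation-of-weak-equivalences principle.
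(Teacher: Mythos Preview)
Your proposal is correct and follows essentially the same strategy as the paper: both argue that the underlying functor preserves quasi-isomorphisms (the paper via exactness in $\mathsf{Mod}(\Kbb)$ and commutation with cohomology, you via exactness in $\Ccal(\Acal)$ itself), and then invoke the general principle that a weak-equivalence-preserving functor computes its own total derived functor (the paper cites \cite[Corollary~7.2.5]{MGroth}, while you spell out the cofibrant-replacement argument directly). Your explicit treatment of the (AB4*) issue for products is a welcome clarification that the paper leaves implicit.
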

\begin{proof}
This result is well-known, but we sketch an argument along the lines of \cite[Proposition 2.2(2)]{TV}. We focus on statement (1), since the proof of statements (2) and (3) are analogous. Let $S$ be a directed small category. First we observe that, since $\varinjlim_S$ is exact in $\mathsf{Mod}(\Kbb)$, it commutes with the cohomology functor in $\Ccal(\Acal)$. This implies that weak equivalences (i.e.~quasi-isomorphisms) are preserved under directed colimits.  The statement then follows from \cite[Corollary 7.2.5]{MGroth}.
\end{proof}

\begin{corollary}\label{Cor: yoneda and hocolims}
Let $\Acal$ be a small dg category. Denote $\Tcal := \Dcal(\Acal)$ and let $\mathbf{y} \colon \Tcal \to \mathsf{Mod}(\Tcal^c)$ be the functor defined in Section \ref{Sec: comp gen}.  Then the composition $\mathbf{y}\circ \pi \colon\Ccal(\Acal)\longrightarrow \mathsf{Mod}(\Tcal^c)$ preserves directed colimits.
\end{corollary}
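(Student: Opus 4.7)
The plan is to verify the desired isomorphism $\mathbf{y}\pi(\varinjlim_I X_i) \cong \varinjlim_I \mathbf{y}\pi X_i$ pointwise on compact objects, since directed colimits in $\mathsf{Mod}(\Tcal^c)$ are computed objectwise. Concretely, for each $C \in \Tcal^c$ there is a canonical comparison map
\[
\eta_C \colon \varinjlim_{i \in I} \Hom_\Tcal(C, \pi X_i) \longrightarrow \Hom_\Tcal(C, \pi(\varinjlim_{i \in I} X_i))
\]
induced by the structure morphisms $X_i \to \varinjlim X_i$ in $\Ccal(\Acal)$, and the goal reduces to showing $\eta_C$ is an isomorphism for every compact $C$. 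Using Proposition \ref{prop: derived functors}(1), the target is naturally $\Hom_\Tcal(C, \mathbb{L}\varinjlim \pi X_i)$, so the corollary is equivalent to asserting that compact objects of $\Tcal = \Dcal(\Acal)$ are finitely presented with respect to directed homotopy colimits.

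First, I would verify $\eta_C$ in the case that $C$ is a representable dg module $A_a^{\wedge} := \Acal(-,a)$, for $a \in \Acal$. Such a representable is cofibrant in the projective model structure of Proposition \ref{model in DG}, while every object of $\Ccal(\Acal)$ is fibrant, so an enriched Yoneda lemma gives a natural identification $\Hom_\Tcal(A_a^{\wedge}, \pi Y) \cong H^0 Y(a)$ for all $Y \in \Ccal(\Acal)$. Since directed colimits in $\Ccal(\Acal)$ are computed pointwise in $\Ccal(\Kbb)$, and since $H^0$ commutes with directed colimits of chain complexes (a fact already exploited in the proof of Proposition \ref{prop: derived functors}), chaining these isomorphisms yields that $\eta_{A_a^{\wedge}}$ is an isomorphism.

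Next, a standard devissage argument will extend the conclusion to all of $\Tcal^c$. The class of objects $C \in \Tcal^c$ for which $\eta_C$ is an isomorphism is a thick subcategory of $\Tcal^c$: closure under shifts and summands is immediate; closure under extensions follows by applying $\Hom_\Tcal(-, \pi X_i)$ for each $i \in I$ and $\Hom_\Tcal(-, \pi(\varinjlim X_i))$ to a distinguished triangle in $\Tcal^c$ to produce long exact sequences, and then invoking exactness of directed colimits of abelian groups together with the five lemma. Since $\Tcal^c$ is the thick closure of the family of representables $\{A_a^{\wedge} : a \in \Acal\}$ (Keller, \cite{Keller}), the previous step forces $\eta_C$ to be an isomorphism for every $C \in \Tcal^c$.

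The hard part will be step one, namely justifying the identification $\Hom_\Tcal(A_a^{\wedge}, \pi Y) \cong H^0 Y(a)$ for arbitrary $Y \in \Ccal(\Acal)$, which hinges on the cofibrancy of $A_a^{\wedge}$ paired with the fibrancy of every $Y$ in the projective model structure. The remaining ingredients -- the pointwise nature of directed colimits in $\Ccal(\Acal)$, the commutation of $H^0$ with such colimits, and the thick-subcategory devissage -- are all routine.
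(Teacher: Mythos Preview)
Your argument is correct. Both the paper and you reduce the claim, via Proposition~\ref{prop: derived functors}(1), to the assertion that $\Hom_\Tcal(C,-)$ sends directed homotopy colimits to directed colimits of abelian groups for every compact $C$. The paper simply invokes this as a known characterisation of compactness in the derivator setting, citing \cite[Proposition~5.4]{SSV}, and is done in two lines. You instead prove the same fact from first principles: establish it for the representables $A_a^\wedge$ via the identification $\Hom_\Tcal(A_a^\wedge,\pi Y)\cong H^0 Y(a)$ together with the exactness of directed colimits in $\Ccal(\Kbb)$, and then propagate to all of $\Tcal^c$ by a thick-subcategory d\'evissage using the five lemma and Keller's description of $\Tcal^c$ as the thick closure of the representables. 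Your route is more self-contained and makes explicit the mechanism behind the cited result; the paper's route is shorter because it outsources precisely this step to the literature. The ``hard part'' you flag is not actually hard: cofibrancy of $A_a^\wedge$ is standard (it is one of Keller's property-(P) objects), and the identification with $H^0 Y(a)$ is the dg Yoneda lemma combined with the fact that maps from a cofibrant object to any object in the homotopy category coincide with homotopy classes.
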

\begin{proof}
By \cite[Proposition 5.4]{SSV}, the compact objects are precisely the objects $X$ for which the representable functors $\Hom_\Tcal(X,-)$ send directed homotopy colimits to directed colimits in $\mathsf{Mod}(\mathbb{Z})$. Hence, $\mathbf{y}$ sends directed homotopy colimits in $\Tcal$ to directed colimits in $\mathsf{Mod}(\Tcal^c)$. Since, by Proposition \ref{prop: derived functors}, the functor $\pi$ sends directed colimits in $\Ccal(\Acal)$ to directed homotopy colimits in $\Tcal$, the corollary follows.
\end{proof}

The upshot of Proposition \ref{prop: derived functors} is that we can transfer closure conditions between $\Ccal(\Acal)$ and $\Dcal(\Acal)$.  In order to make this precise, we will need the following definitions.

\begin{definition}\label{Def: closed hocolim} Let $\Acal$ be a small dg category and $\Xcal$ a subcategory of $\Dcal(\Acal)$. \begin{enumerate}
\item The \textbf{preimage} of $\Xcal$ in $\Ccal(\Acal)$ is the subcategory \[\pi^{-1}(\Xcal) := \{ X \in \Ccal(\Acal) \mid \pi(X) \in \Xcal\}.\]
\item The subcategory $\Xcal$ is \textbf{closed under directed homotopy colimits} if for every directed small category $S$ and every object $X = (X_s)_{s\in S}$ in $\Ccal(\Acal)^S$ such that $\pi(X_s)$ is in $\Xcal$ for every $s$ in $S$, we have that $\mathbb{L}\varinjlim_S\pi_S X$ is in $\Xcal$. 
\end{enumerate}
\end{definition}

\begin{corollary}\label{colim closed}
Let $\Acal$ be a small dg category and let $\Xcal$ be a subcategory of $\Dcal(\Acal)$. The following statements hold.
\begin{enumerate}
\item $\Xcal$ is closed under products if and only if $\pi^{-1}(\Xcal)$ is closed under products.
\item $\Xcal$ is closed under coproducts if and only if $\pi^{-1}(\Xcal)$ is closed under coproducts.
\item $\Xcal$ is closed under directed homotopy colimits if and only if $\pi^{-1}(\Xcal)$ is closed under directed colimits.
\end{enumerate}
\end{corollary}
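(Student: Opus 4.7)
The plan is to derive all three equivalences uniformly from Proposition \ref{prop: derived functors}, which identifies the images under $\pi$ of (co)products and directed colimits in $\Ccal(\Acal)$ with the corresponding (co)products and directed homotopy colimits in $\Dcal(\Acal)$. Throughout, I use the convention that a subcategory is closed under isomorphism.

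For (1), I would prove the two directions separately. For the forward implication, take a family $(X_s)_{s\in S}$ in $\pi^{-1}(\Xcal)$, so each $\pi X_s$ lies in $\Xcal$; then $\prod_s \pi X_s \in \Xcal$ by hypothesis, and Proposition \ref{prop: derived functors}(2) gives a canonical isomorphism $\pi\bigl(\prod_s X_s\bigr) \cong \prod_s \pi X_s$, showing $\prod_s X_s \in \pi^{-1}(\Xcal)$. For the converse, I would use that $\pi$ is essentially surjective (since it is the localisation functor onto the homotopy category of a model structure, see Proposition \ref{model in DG}): given $(Y_s)_{s\in S}$ in $\Xcal$, choose $X_s$ in $\Ccal(\Acal)$ with $\pi X_s \cong Y_s$, so that each $X_s$ lies in $\pi^{-1}(\Xcal)$; then $\prod_s X_s$ lies in $\pi^{-1}(\Xcal)$ by hypothesis, and Proposition \ref{prop: derived functors}(2) gives $\prod_s Y_s \cong \pi\bigl(\prod_s X_s\bigr) \in \Xcal$. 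Statement (2) is proved identically, with coproducts replacing products and Proposition \ref{prop: derived functors}(3) replacing Proposition \ref{prop: derived functors}(2).

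For (3), I would simply unpack Definition \ref{Def: closed hocolim}(2) directly. By definition, $\Xcal$ is closed under directed homotopy colimits precisely when, for every directed small $S$ and every $X = (X_s)_{s\in S}$ in $\Ccal(\Acal)^S$ with $\pi X_s \in \Xcal$ for all $s$ (i.e.\ every $X_s \in \pi^{-1}(\Xcal)$), the object $\mathbb{L}\varinjlim_S \pi_S X$ lies in $\Xcal$. Proposition \ref{prop: derived functors}(1) identifies this object with $\pi\bigl(\varinjlim_S X\bigr)$, so the condition is equivalent to $\varinjlim_S X \in \pi^{-1}(\Xcal)$ for every such $X$, which is exactly the closure of $\pi^{-1}(\Xcal)$ under directed colimits.

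I do not anticipate any serious obstacle. The whole argument rests on the compatibility isomorphisms of Proposition \ref{prop: derived functors}, together with essential surjectivity of $\pi$ for the backward directions of (1) and (2). The only point worth noting is that the definition of closure under directed homotopy colimits has been formulated in terms of diagrams that already live in $\Ccal(\Acal)^S$, which sidesteps the nontrivial issue of lifting a diagram in $\Dcal(\Acal)$ to a strict diagram in $\Ccal(\Acal)$; consequently (3) becomes essentially a tautology after invoking Proposition \ref{prop: derived functors}(1).
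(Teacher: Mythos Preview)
Your proposal is correct and is exactly the argument the paper has in mind: the corollary is stated without proof immediately after Proposition~\ref{prop: derived functors}, and you have simply spelled out the routine verification that the compatibility isomorphisms of that proposition (together with essential surjectivity of $\pi$ for the converse directions of (1) and (2)) yield all three equivalences. Your observation that (3) is essentially tautological once one unpacks Definition~\ref{Def: closed hocolim}(2) is also precisely the point of formulating that definition in terms of diagrams in $\Ccal(\Acal)^S$.
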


The following lemma indicates that the property of being closed under pure subobjects or pure quotients can also be transferred from $\mathcal{D}(\Acal)$ to $\Ccal(\Acal)$.

\begin{lemma}\label{transfer purity}
Let $\Acal$ be a small dg category and let $\Xcal$ be a subcategory of $\Dcal(\Acal)$. If $\Xcal$ is closed under pure quotients (respectively, pure subobjects) in $\Dcal(\Acal)$, then so is its preimage $\pi^{-1}(\Xcal)$. 
\end{lemma}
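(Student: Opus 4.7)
I would prove the pure quotients case; the pure subobjects case is formally dual. Let $\Xcal\subseteq\Dcal(\Acal)$ be closed under pure quotients and let $g\colon X\to Y$ be a pure epimorphism in $\Ccal(\Acal)$ with $\pi(X)\in\Xcal$. The plan is to show that $\pi(g)$ is a pure epimorphism in $\Dcal(\Acal)$, from which the closure hypothesis on $\Xcal$ gives $\pi(Y)\in\Xcal$, i.e.~$Y\in\pi^{-1}(\Xcal)$.

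First I would unpack the pure epimorphism as a directed colimit $g=\varinjlim_I g_i$ of split epimorphisms $g_i\colon X_i\to Y_i$ in $\Ccal(\Acal)$. Since any functor preserves split epis, each $\pi(g_i)$ is a split epimorphism in $\Dcal(\Acal)$, and so $\mathbf{y}\pi(g_i)$ is a split epimorphism in the Grothendieck category $\mathsf{Mod}(\Tcal^c)$. By Corollary \ref{Cor: yoneda and hocolims}, the composite $\mathbf{y}\circ\pi$ preserves directed colimits, giving $\mathbf{y}\pi(g)=\varinjlim\mathbf{y}\pi(g_i)$. As a directed colimit of epimorphisms in a Grothendieck category, this is itself an epimorphism.

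Next, I would complete $\pi(g)$ to a triangle $W\to\pi(X)\to\pi(Y)\to W[1]$ and apply $\mathbf{y}$ to obtain a long exact sequence in $\mathsf{Mod}(\Tcal^c)$. Since $\Tcal^c$ is closed under shifts, the property that $\mathbf{y}(-)$ of a morphism is an epimorphism is invariant under shifting: for every $C\in\Tcal^c$, $\mathbf{y}(\pi(g)[-1])(C)=\mathbf{y}(\pi(g))(C[1])$. Hence both $\mathbf{y}\pi(g)$ and $\mathbf{y}(\pi(g)[-1])$ are epi. Reading off the long exact sequence, this forces both connecting maps $\mathbf{y}\pi(Y)\to\mathbf{y}W[1]$ and $\mathbf{y}\pi(Y)[-1]\to\mathbf{y}W$ to vanish, yielding a short exact sequence $0\to\mathbf{y}W\to\mathbf{y}\pi(X)\to\mathbf{y}\pi(Y)\to 0$. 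By definition the completed triangle is then pure, so $\pi(g)$ is a pure epimorphism in $\Dcal(\Acal)$, as required.

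The main subtle point I anticipate is the last step: deducing purity of the completed triangle from the epi-ness of $\mathbf{y}\pi(g)$, which requires the shift-invariance observation in order to obtain the mono on the left of the short exact sequence. An alternative route sidestepping this would be to apply Lemma \ref{pure mono} directly, observing that for each $i$ the split SES $0\to K_i\to X_i\to Y_i\to 0$ in $\Ccal(\Acal)$ (with $K_i=\ker g_i$) yields a split triangle in $\Dcal(\Acal)$, and the $\mathbf{y}$-colimit of these split triangles is, by Corollary \ref{Cor: yoneda and hocolims}, a short exact sequence of the form required to certify purity of the triangle completing $\pi(g)$.
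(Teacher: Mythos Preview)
Your proposal is correct. Both your main route and your alternative are valid.

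The paper's own proof is essentially your alternative route, but streamlined: after obtaining $\mathbf{y}\pi(g)=\varinjlim_I\mathbf{y}\pi(g_i)$ via Corollary~\ref{Cor: yoneda and hocolims}, it simply invokes the consequence of Lemma~\ref{pure mono} recorded immediately after that lemma---namely, that a morphism in $\Tcal$ is a pure epimorphism if and only if its image under $\mathbf{y}$ is a pure epimorphism in $\mathsf{Mod}(\Tcal^c)$. Since $\mathbf{y}\pi(g)$ is a directed colimit of split epimorphisms, it is a pure epimorphism in $\mathsf{Mod}(\Tcal^c)$, and the conclusion follows in one line. Your main route, by contrast, bypasses that black-box consequence and verifies directly, via the long exact sequence and the shift-invariance observation, that the completed triangle is pure; this is correct and self-contained but amounts to reproving the relevant part of the remark after Lemma~\ref{pure mono} on the spot. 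Your alternative sketch is a little more elaborate than necessary (tracking the kernels $K_i$ and identifying the colimit triangle), whereas the paper avoids this by working only with the morphism $g$ and the pure-epimorphism characterisation.
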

\begin{proof}
We prove the statement for pure quotients, since the other statement can be proved analogously. Let $M$ be an object in $\pi^{-1}(\Xcal)$ and suppose that there is a pure epimorphism $f\colon M\longrightarrow N$ in the locally finitely presented category $\Ccal(\Acal)$. Then we have that $f=\varinjlim_{I}f_i$ for some directed system of split epimorphisms $(f_i)_{i\in I}$ in $\Ccal(\Acal)$. Since, by Corollary \ref{Cor: yoneda and hocolims} the composition $\mathbf{y}\circ \pi$ commutes with directed colimits, it follows that $\mathbf{y}\pi(f)=\varinjlim_I \mathbf{y}\pi(f_i)$. Since $\pi(f_i)$ are split epimorphisms in $\Dcal(\Acal)$, from Lemma \ref{pure mono} we have that $\pi(f)$ is a pure epimorphism in $\Tcal$ and, thus, $\pi(N)$ must also lie in $\Xcal$. Hence $N$ lies in $\pi^{-1}(\Xcal)$ as wanted. 
\end{proof}

Now we can prove the analogous statement to Theorem \ref{lfp precover}(1).

\begin{proposition}\label{coprod colim}
Let $\Acal$ be a small dg category and let $\Xcal$ be a subcategory of $\Dcal(\Acal)$ that is closed under pure quotients. Then the subcategory $\Xcal$ is closed under directed homotopy colimits if and only if it is closed under coproducts.
\end{proposition}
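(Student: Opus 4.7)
The plan is to reduce the statement to Theorem \ref{lfp precover}(1) applied to the preimage $\pi^{-1}(\Xcal)$ in the abelian category $\Ccal(\Acal)$, using the transfer lemmas established immediately beforehand.

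First, I would invoke Lemma \ref{transfer purity}: since $\Xcal$ is closed under pure quotients in $\Dcal(\Acal)$, the preimage $\pi^{-1}(\Xcal)$ is closed under pure quotients in $\Ccal(\Acal)$. Since $\Ccal(\Acal)$ is a locally finitely presented Grothendieck category (by \cite[Lemma~2.12]{StPo}, as recalled in the text), Theorem \ref{lfp precover}(1) applies and yields that $\pi^{-1}(\Xcal)$ is closed under directed colimits if and only if it is closed under coproducts.

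Next, I would apply Corollary \ref{colim closed}(2) and (3) to translate these closure properties back to $\Xcal$: namely, $\Xcal$ is closed under coproducts if and only if $\pi^{-1}(\Xcal)$ is closed under coproducts, and $\Xcal$ is closed under directed homotopy colimits if and only if $\pi^{-1}(\Xcal)$ is closed under directed colimits. Chaining these equivalences with the one obtained from Theorem \ref{lfp precover}(1) gives the desired statement.

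There is no real obstacle: all the nontrivial work (the compatibility of the functor $\mathbf{y}\circ\pi$ with directed colimits, the computation of derived (co)products and directed homotopy colimits in $\Dcal(\Acal)$ via their underived counterparts in $\Ccal(\Acal)$, and the transfer of purity along $\pi^{-1}$) has been packaged into Proposition \ref{prop: derived functors}, Corollary \ref{Cor: yoneda and hocolims}, Corollary \ref{colim closed} and Lemma \ref{transfer purity}. The proof is therefore a short combinatorial assembly of these ingredients, and its entire content is that the abelian-categorical fact of Theorem \ref{lfp precover}(1) survives the passage to the triangulated localisation because every relevant closure condition has a well-behaved lift to $\Ccal(\Acal)$.
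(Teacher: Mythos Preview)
Your proposal is correct and follows essentially the same approach as the paper: lift the closure conditions to $\pi^{-1}(\Xcal)$ in $\Ccal(\Acal)$ via Lemma~\ref{transfer purity} and Corollary~\ref{colim closed}, then apply Theorem~\ref{lfp precover}(1). The only cosmetic difference is that the paper dispatches the implication ``closed under directed homotopy colimits $\Rightarrow$ closed under coproducts'' directly (since every coproduct is a directed homotopy colimit) rather than routing both directions through the preimage.
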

\begin{proof}
Since every coproduct is a directed homotopy colimit, it remains to show that if $\Xcal$ that is closed under coproducts then it is also closed under directed homotopy colimits. By Lemma \ref{transfer purity}, the subcategory $\pi^{-1}(\Xcal)$  of $\Ccal(\Acal)$ is closed under pure quotients and it is also clearly closed under coproducts. By Theorem \ref{lfp precover}(1), it follows that $\pi^{-1}(\Xcal)$ is closed under directed colimits and, therefore, by Corollary \ref{colim closed}, we conclude that $\Xcal$ is closed under directed homotopy colimits.
\end{proof}

\subsection{Compactly generated algebraic triangulated categories as stable categories}\label{Sec: Frob ex}

 The stable category $\underline{\Fcal}$ of a Frobenius exact category $\Fcal$ is always triangulated \cite[Theorem~2.6]{Happel} and many, if not all, triangulated categories arising in algebra are of this form.  This therefore motivates the next definition.

\begin{definition}\label{Def: alg}
A triangulated category is called \textbf{algebraic} if it is equivalent to the stable category of a Frobenius exact category.
\end{definition}

We will make use of the following Frobenius exact category yielding $\Dcal(\Acal)$ as its stable category.

\begin{proposition}[{see, for example, \cite[{Theorem VIII.4.2}]{BelRei}}]\label{cofibrant Frobenius}
Let $\Acal$ be a small dg category and let $\mathscr{C}$ be the subcategory of $\Ccal(\Acal)$ consisting of cofibrant objects in the projective model structure.  The following statements hold. 
\begin{enumerate}
\item The subcategory $\mathscr{C}$ with the exact structure inherited from the abelian structure on $\Ccal(\Acal)$ is a Frobenius exact category.
\item The restriction $\pi|_{\mathscr{C}}\colon\mathscr{C} \longrightarrow \Dcal(\Acal)$ induces an equivalence of categories between $\underline{\mathscr{C}}$ and $\Dcal(\Acal)$.
\end{enumerate}
\end{proposition}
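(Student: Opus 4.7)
The plan is to combine the structure of the projective model category on $\Ccal(\Acal)$ (Proposition \ref{model in DG}) with the standard recipe for extracting a Frobenius exact category from a model category in which every object is fibrant.

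For (1), I would first equip $\mathscr{C}$ with the exact structure whose conflations are the short exact sequences $0 \to X \to Y \to Z \to 0$ in $\Ccal(\Acal)$ with all three terms in $\mathscr{C}$. The axioms of an exact category are then inherited from the abelian structure on $\Ccal(\Acal)$ together with closure of $\mathscr{C}$ under extensions, direct summands, and shifts; these closure properties follow from the fact that cofibrations are stable under pushout in a model category and that $\mathscr{C}$ is the class of objects $X$ such that $0 \to X$ is a cofibration. Next, I would identify the projective-injective objects of $\mathscr{C}$ with the contractible cofibrant objects, i.e.\ the objects $P$ for which $\mathrm{id}_P$ is null-homotopic in the dg sense. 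To see that there are enough of these, one appeals to the classical cone (or disk) construction: for $X \in \mathscr{C}$, the mapping cone $C(\mathrm{id}_X)$ is contractible and fits into a short exact sequence $0 \to X \to C(\mathrm{id}_X) \to X[1] \to 0$ in $\Ccal(\Acal)$ whose outer terms are cofibrant, and hence so is the middle one. Since $\mathscr{C}$ is closed under shifts, the shifted sequence provides an analogous deflation from a contractible cofibrant object onto $X$. As projective and injective objects of this exact structure both coincide with the contractible cofibrant objects, $\mathscr{C}$ is Frobenius.

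For (2), I would invoke the general fact from model category theory that when every object of a model category $\mathcal{M}$ is fibrant, the localisation functor restricted to cofibrant objects is essentially surjective onto $\mathrm{Ho}(\mathcal{M})$ and identifies morphisms in the homotopy category with left-homotopy classes of maps between cofibrant objects (see, e.g., \cite[Chapter~1]{Hovey}). Essential surjectivity of $\pi|_\mathscr{C}$ then follows from cofibrant replacement, and full faithfulness reduces to showing that the model-theoretic left-homotopy relation on maps between cofibrant objects coincides with stable equivalence, i.e.\ that $f \sim g$ if and only if $f - g$ factors through a contractible cofibrant object. This identification is a direct consequence of the cone construction used in (1) together with the explicit description of cylinder objects in $\Ccal(\Acal)$. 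Combined with Proposition \ref{model in DG}, which identifies $\mathrm{Ho}(\Ccal(\Acal))$ with $\Dcal(\Acal)$, this yields the claimed equivalence $\underline{\mathscr{C}} \simeq \Dcal(\Acal)$ induced by $\pi|_\mathscr{C}$.

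The principal obstacle is the verification of two compatibilities: first, that the cone functor preserves cofibrancy, so that $C(\mathrm{id}_X)$ genuinely lies in $\mathscr{C}$; and second, that left-homotopy between cofibrant objects in $\Ccal(\Acal)$ coincides with null-homotopy modulo the ideal of maps factoring through contractible cofibrant objects. Both are sensitive to the explicit form of the generating cofibrations in the projective model structure on $\Ccal(\Acal)$ for a general small dg category $\Acal$; once an explicit description is fixed, both claims reduce to routine closure checks for $\mathscr{C}$ under the disk functor and a degreewise comparison of the two notions of homotopy.
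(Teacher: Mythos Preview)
The paper does not supply its own proof of this proposition: it is stated with a reference to \cite[Theorem VIII.4.2]{BelRei} and used as a black box. There is therefore nothing in the paper to compare your argument against.

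That said, your sketch is the standard route and is essentially what one finds in the cited reference (or in Hovey's treatment of abelian model structures). A couple of remarks that would tighten it. First, the identification of the projective-injective objects of $\mathscr{C}$ with the ``contractible cofibrant'' objects is better phrased via the cotorsion pair $(\mathscr{C},\Wcal_0)$ of Example~\ref{model cotorsion}: the projective-injectives are precisely $\mathscr{C}\cap\Wcal_0$, which in the projective model structure are the categorical projectives of $\Ccal(\Acal)$. This makes the ``enough projectives/injectives'' step immediate without needing to analyse generating cofibrations, since $\Ccal(\Acal)$ has enough projectives and the cotorsion pair is complete. Second, your closure argument for $C(\mathrm{id}_X)\in\mathscr{C}$ is fine once you note that $\mathscr{C}$ is extension-closed (from the cotorsion pair) and shift-closed; you do not need an explicit description of the generating cofibrations for this. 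With these observations the ``principal obstacles'' you flag largely dissolve.
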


We finish this section with a brief account of the connection between the torsion pairs in $\underline{\mathscr{C}}$ and the complete cotorsion pairs in $\mathscr{C}$.  This will prove useful in Section \ref{Sec: tors}.

\begin{definition}\label{Def: torsion pair}
A pair of subcategories $(\Ucal,\Vcal)$ in a triangulated category $\Tcal$ is called a \textbf{torsion pair} if 
\begin{enumerate}
\item $\Ucal$ and $\Vcal$ are closed under summands;
\item $\Hom_\Tcal(\Ucal,\Vcal)=0$;
\item For any object $X$ in $\Tcal$ there are objects $U$ in $\Ucal$, $V$ in $\Vcal$ and a triangle 
$U\longrightarrow X\longrightarrow V\longrightarrow U[1].$
\end{enumerate}
If $(\Ucal,\Vcal)$ is a torsion pair in $\Tcal$, then we say that $\Ucal$ is a \textbf{torsion class} and $\Vcal$ is a \textbf{torsionfree class}. \end{definition}

It follows easily from the definition that $\Vcal=\Ucal^\perp$ and $\Ucal={}^\perp\Vcal$. If $\Tcal$ is an algebraic triangulated category, then torsion pairs in $\Tcal$ are in bijection with certain pairs of subcategories of $\Fcal$. Let us recall the definition of a complete cotorsion pair in an exact category.

\begin{definition}
Let $\Fcal$ be an exact category. We say that a pair of subcategories  $(\mathcal{M},\mathcal{N})$ of $\Fcal$  is a \textbf{complete cotorsion pair} if the following statements hold.
\begin{itemize}
\item $\mathcal{N}=\Ker \Ext^1_\Fcal(\mathcal{M},-)$ and $\mathcal{M}=\Ker \Ext^1_\Fcal(-,\mathcal{N})$.
\item $\mathcal{M}$ is \textbf{special precovering}, i.e.~for every object $X$ in $\Fcal$ there are objects $M_1$ in $\mathcal{M}$, $N_1$ in $\mathcal{N}$ and a conflation
$$\xymatrix{0\ar[r] & N_1\ar[r]& M_1\ar[r]^f & X\ar[r] &0.}$$ 
\item $\mathcal{N}$ is \textbf{special preenveloping}, i.e.~for every object $X$ in $\Fcal$ there are objects $N_2$ in $\mathcal{N}$, $M_2$ in $\mathcal{M}$ and a conflation
$$\xymatrix{0\ar[r] & X\ar[r]^g& N_2\ar[r] & M_2\ar[r] &0.}$$ 
\end{itemize}
\end{definition}
From the above conflations, it is easy to see that, indeed, $f$ is a $\mathcal{M}$-precover and $g$ is a $\mathcal{N}$-preenvelope. 

\begin{example}\label{example Wakamatsu}
The well-known Wakamatsu's Lemma states that every extension-closed covering subcategory of an abelian category is special precovering. In particular, if $\Acal$ is a well-powered locally finitely presented abelian category and $\Xcal$ is a subcategory closed under extensions, directed colimits and pure quotients, then by Corollary \ref{cor covering} and Wakamatsu's Lemma, $\Xcal$ is special precovering in $\Acal$.
\end{example}

\begin{example}\label{model cotorsion}
Abelian model structures are intimately related with cotorsion pairs (see \cite{Hovey2}). In particular, the projective model structure in $\Ccal(\Acal)$ from Proposition \ref{model in DG} guarantees the existence of a cotorsion pair $(\mathscr{C},\Wcal_0)$, where $\mathscr{C}$ is the subcategory of cofibrant objects and $\Wcal_0$ is the category of trivial objects (i.e.~those which are quasi-isomorphic to zero or, in other words, acyclic).
\end{example}

\begin{theorem}[{\cite[Proposition 3.3]{SaSt}}]\label{cotorsion torsion}
Let $\Tcal$ be the stable category of a Frobenius exact category $\Fcal$ and let $\phi\colon \Fcal\longrightarrow \underline{\Fcal}=\Tcal$ denote the canonical functor. There is a bijection between complete cotorsion pairs in $\Fcal$ and torsion pairs in $\Tcal$ given by the assignment $(\mathcal{M},\mathcal{N})\mapsto (\phi\mathcal{M},(\phi\mathcal{N})[1])$.
\end{theorem}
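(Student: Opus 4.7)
The plan is to exhibit an inverse assignment and verify compatibility, using two standard facts about the stable category of a Frobenius exact category $\Fcal$: the natural isomorphism $\Hom_\Tcal(\phi X, \phi Y[1]) \cong \Ext^1_\Fcal(X, Y)$ (coming from the defining conflation of the shift) and the fact that every distinguished triangle in $\Tcal = \underline{\Fcal}$ is isomorphic to one induced by a conflation in $\Fcal$. A recurring technical observation is that if a subcategory $\mathcal{S}$ of $\Fcal$ is closed under summands and contains all projective-injective objects, then $\phi^{-1}(\phi\mathcal{S}) = \mathcal{S}$: any $Y$ with $\phi Y \cong \phi S$ for $S \in \mathcal{S}$ is a summand in $\Fcal$ of $S \oplus J$ for some projective-injective $J$, since the correction term $1_Y - \beta\alpha$ (for lifts $\alpha,\beta$ of the stable isomorphism) factors through a projective-injective.

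For the forward direction, given a complete cotorsion pair $(\mathcal{M},\mathcal{N})$, I will verify the three axioms of Definition \ref{Def: torsion pair} for $(\phi\mathcal{M}, (\phi\mathcal{N})[1])$. Both $\mathcal{M}$ and $\mathcal{N}$ are closed under summands in $\Fcal$ (by additivity of $\Ext^1$) and contain the projective-injectives (since $\Ext^1$ vanishes on them), so the observation above transfers closure under summands to $\phi\mathcal{M}$ and $(\phi\mathcal{N})[1]$ in $\Tcal$. The vanishing $\Hom_\Tcal(\phi\mathcal{M}, (\phi\mathcal{N})[1]) = 0$ is immediate from the Hom-Ext isomorphism. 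The torsion triangle for any $X \in \Fcal$ is obtained by rotating the triangle induced by a special $\mathcal{M}$-precovering conflation $0 \to N_1 \to M_1 \to X \to 0$.

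For the reverse direction, given a torsion pair $(\Ucal, \Vcal)$ in $\Tcal$, I will set $\mathcal{M} := \phi^{-1}(\Ucal)$ and $\mathcal{N} := \phi^{-1}(\Vcal[-1])$. Both classes are closed under summands and contain the projective-injectives (which $\phi$ annihilates). The orthogonality relations $\mathcal{N} = \Ker \Ext^1_\Fcal(\mathcal{M},-)$ and $\mathcal{M} = \Ker \Ext^1_\Fcal(-,\mathcal{N})$ translate through the Hom-Ext isomorphism into the equalities $\Ucal = {}^\perp \Vcal$ and $\Vcal = \Ucal^\perp$. For the special precover at $X \in \Fcal$, I will start from the torsion triangle $U \to \phi X \to V \to U[1]$, lift the first map to a morphism $M_0 \to X$ in $\Fcal$ with $M_0 \in \mathcal{M}$, and rectify it into a deflation by adjoining a deflation $P \to X$ from a projective $P$ (which lies in $\mathcal{M}$); uniqueness of cones then identifies the stable image of the kernel with $V[-1] \in \phi\mathcal{N}$, so the kernel lies in $\mathcal{N}$. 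For the special preenvelope I will instead apply the torsion pair to $\phi X[1]$ and shift back to a triangle $\phi X \to V'[-1] \to U' \to \phi X[1]$ with $V'[-1] \in \phi\mathcal{N}$ and $U' \in \phi\mathcal{M}$; I then lift the map $\phi X \to V'[-1]$ to $f \colon X \to Y$ in $\Fcal$ and turn it into an inflation via $(f,i)\colon X \to Y \oplus I(X)$ with $i$ an inflation into an injective.

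The two assignments are mutually inverse: starting from a cotorsion pair, the observation $\phi^{-1}(\phi\mathcal{S}) = \mathcal{S}$ recovers the original $\mathcal{M}$ and $\mathcal{N}$; starting from a torsion pair, the inclusions $\phi\mathcal{M} \subseteq \Ucal$ and $(\phi\mathcal{N})[1] \subseteq \Vcal$ are tautological from the definitions, while the reverse inclusions follow from the approximation triangles constructed above together with the uniqueness of the torsion decomposition. The main obstacle I anticipate is the careful bookkeeping required when lifting stable morphisms to genuine inflations or deflations in $\Fcal$: at each step one must enlarge by a projective-injective to rectify the lift, and verifying that this enlargement preserves membership in $\mathcal{M}$ or $\mathcal{N}$ depends essentially on the Frobenius hypothesis that projectives coincide with injectives and hence lie in \emph{both} classes of any cotorsion pair.
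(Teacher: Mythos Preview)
The paper does not supply its own proof of this theorem; it is quoted with a citation to \cite[Proposition~3.3]{SaSt} and used as a black box. Your proposal is a correct and complete argument along standard lines (the $\Hom$--$\Ext$ isomorphism in the stable category, lifting stable maps to genuine deflations/inflations by adjoining a projective-injective, and the $\phi^{-1}\phi\mathcal{S}=\mathcal{S}$ observation), and is essentially the proof given in the cited reference.
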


\section{Approximation theory in algebraic triangulated categories}\label{Sec: Approx}
In this section we discuss sufficient closure conditions for a subcategory of a compactly generated algebraic triangulated category to be precovering or preenveloping. The trick is to lift those closure conditions to a locally finitely presented category and use the results on approximation theory available there. All the results in this section will be stated for a compactly generated algebraic triangulated category $\Tcal$ but we wish to make reference to some notion of homotopy colimit in $\Tcal$.  We therefore make the following definition.

\begin{definition}
Let $\Tcal$ be a compactly generated algebraic triangulated category.  We will say that a subcategory $\Xcal$ is \textbf{closed under homotopy colimits} if there exists a small dg category $\Acal$ and an equivalence $F \colon \Tcal \rightarrow \Dcal(\Acal)$ such that $F(\Xcal)$ is closed under homotopy colimits in $\Dcal(\Acal)$.
\end{definition}

 The following theorem is a triangulated analogue of Theorem \ref{lfp precover}.  

\begin{theorem}\label{approx for def}
Let $\Xcal$ be a subcategory of a compactly generated algebraic triangulated category $\Tcal$.
\begin{enumerate}
\item If $\Xcal$ is closed under pure quotients and coproducts, then $\Xcal$ is precovering.
\item If $\Xcal$ is closed under pure subobjects and products, then $\Xcal$ is preenveloping.
\end{enumerate}
\end{theorem}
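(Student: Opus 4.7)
The plan is to invoke Keller's theorem to assume $\Tcal = \Dcal(\Acal)$ for a small dg category $\Acal$ equipped with the projective model structure on $\Ccal(\Acal)$, and to transfer the closure conditions on $\Xcal$ down to the preimage $\pi^{-1}(\Xcal)$ inside the locally finitely presented Grothendieck category $\Ccal(\Acal)$. Once $\pi^{-1}(\Xcal)$ is known to be precovering (respectively, preenveloping) in $\Ccal(\Acal)$ via Theorem \ref{lfp precover}, the approximations can be projected back down to $\Tcal$ by means of cofibrant replacements.

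For part (1), Corollary \ref{colim closed}(2) gives that $\pi^{-1}(\Xcal)$ is closed under coproducts and Lemma \ref{transfer purity} gives closure under pure quotients; hence Theorem \ref{lfp precover}(2) applies and $\pi^{-1}(\Xcal)$ is precovering in $\Ccal(\Acal)$. The next step is, given $Y$ in $\Tcal$, to choose a cofibrant object $M$ in $\Ccal(\Acal)$ with $\pi(M) \cong Y$, to take a $\pi^{-1}(\Xcal)$-precover $f \colon P \to M$, and to propose $\pi(f)$, composed with the isomorphism $\pi(M) \cong Y$, as the desired $\Xcal$-precover. To verify the universal property, given $h \colon X' \to Y$ with $X' \in \Xcal$, pick a cofibrant object $Q$ with $\pi(Q) \cong X'$, so that $Q$ lies in $\pi^{-1}(\Xcal)$. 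Since every object is fibrant in the projective model structure and $Q, M$ are cofibrant, $h$ is represented by an honest morphism $h' \colon Q \to M$ in $\Ccal(\Acal)$. The precover property of $f$ produces $t \colon Q \to P$ with $f \circ t = h'$, and applying $\pi$ gives the desired factorisation of $h$ through $\pi(f)$.

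Part (2) is entirely dual: Corollary \ref{colim closed}(1) and Lemma \ref{transfer purity} show that $\pi^{-1}(\Xcal)$ is closed under products and pure subobjects, so Theorem \ref{lfp precover}(3) makes $\pi^{-1}(\Xcal)$ preenveloping in $\Ccal(\Acal)$. For $Y \in \Tcal$ with cofibrant replacement $M$, take a $\pi^{-1}(\Xcal)$-preenvelope $g \colon M \to E$; the projection $\pi(g)$, identified along $\pi(M) \cong Y$, yields the required $\Xcal$-preenvelope. Given $h \colon Y \to X'$ with $X' \in \Xcal$, a cofibrant replacement $Q$ of $X'$ lies in $\pi^{-1}(\Xcal)$, the map $h$ lifts to $h' \colon M \to Q$ in $\Ccal(\Acal)$, and the preenvelope property produces $t \colon E \to Q$ factoring $h'$ through $g$, which descends to the required factorisation in $\Dcal(\Acal)$.

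The main obstacle is the descent step: ensuring that factorisations supplied by Theorem \ref{lfp precover} in $\Ccal(\Acal)$ actually descend to $\Dcal(\Acal)$. This is precisely where the projective model structure is essential, since it guarantees that every object is fibrant, and hence morphisms in $\Dcal(\Acal)$ out of a cofibrant object admit honest representatives in $\Ccal(\Acal)$. The careful use of cofibrant replacements at both source and target of the relevant morphisms is what makes the argument work; once this point is handled, the rest of the proof is a direct translation of the locally finitely presented statement.
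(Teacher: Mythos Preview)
Your proposal is correct and follows essentially the same approach as the paper: transfer the closure conditions to $\pi^{-1}(\Xcal)$ via Corollary~\ref{colim closed} and Lemma~\ref{transfer purity}, apply Theorem~\ref{lfp precover} in $\Ccal(\Acal)$, and descend using cofibrant replacements and the fact that every object is fibrant. The only cosmetic difference is that you take cofibrant replacements of both source and target throughout, whereas the paper is slightly more economical---in part~(1) it leaves the target $M$ arbitrary (only the source needs to be cofibrant to lift morphisms), and in part~(2) it leaves the object $X'\in\Xcal$ arbitrary (only the source $M$ needs a cofibrant replacement); your extra replacements are harmless but unnecessary.
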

\begin{proof}
(1) Since there exists a small dg category $\Acal$ such that $\Tcal \simeq \Dcal(\Acal)$, it suffices to prove (1) for $\Dcal(\Acal)$.  So let $\Xcal$ be a subcategory of $\Dcal(\Acal)$ that is closed under pure quotients and coproducts.  Let $\pi^{-1}(\Xcal)$ denote the preimage of $\Xcal$ in $\Ccal(\Acal)$ (as in Definition \ref{Def: closed hocolim}). Then, by Corollary \ref{colim closed} and Lemma \ref{transfer purity}, the subcategory $\pi^{-1}(\Xcal)$ is closed under coproducts and pure quotients. Thus, it follows from Theorem \ref{lfp precover}(2) that $\pi^{-1}(\Xcal)$ is precovering in $\Ccal(\Acal)$. We use this fact to show that $\Xcal$ is precovering in $\Dcal(\Acal)$. 

Let $\pi(M)$ be an arbitrary object in $\Dcal(\Acal)$ and consider a $\pi^{-1}(\Xcal)$-precover $p\colon X\rightarrow M$ of $M$ in $\Ccal(\Acal)$. We show that $\pi(p) \colon \pi(D) \rightarrow \pi(M)$ is a $\Xcal$-precover.  Let $f \colon \pi(E) \rightarrow \pi(M)$ be any morphism from an object in $\Xcal$ to $\pi(M)$ and let $c \colon C_E \rightarrow E$ be a cofibrant replacement of $E$.  Note that $\pi(c)$ is an isomorphism and so $f$ factors through $\pi(p)$ if and only if $f\circ \pi(c)$ factors through $\pi(p)$.  Moreover, the morphism $f \circ \pi(c)$ is of the form $\pi(g)$ for some $g\colon C_E \rightarrow M$ because $C_E$ is cofibrant.  Finally, we observe that $C_E$ is contained in $\pi^{-1}(\Xcal)$ and so $g$ factors through $p$.  Thus we have that $\pi(g) = f \circ \pi(c)$ (and therefore $f$) factors through $\pi(p)$ as required.

(2) The proof is very similar to (1), the only significant difference being that to build a $\Xcal$-preenvelope of an object $\pi(M)$ in $\Dcal(\Acal)$ we should consider a $\pi^{-1}(\Xcal)$-preenvelope of a cofibrant replacement $C_M$ of $M$. Indeed, if $p\colon C_M\longrightarrow M$ is such a cofibrant replacement and $f\colon C_M\longrightarrow X$ is a $\pi^{-1}(\Xcal)$-preenvelope of $C_M$, then the composition $\pi(f)\circ \pi(p)^{-1}$ can be shown as above to be a $\Xcal$-preenvelope of $\pi(M)$.
\end{proof}

\begin{corollary}
If $\Xcal$ is closed under pure quotients and directed homotopy colimits, then $\Xcal$ is precovering.
\end{corollary}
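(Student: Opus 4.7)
The plan is to reduce this immediately to Theorem \ref{approx for def}(1) by showing that, under the hypothesis of closure under pure quotients, closure under directed homotopy colimits upgrades to closure under coproducts. The tool for this is Proposition \ref{coprod colim}, which already gives the equivalence for subcategories of $\Dcal(\Acal)$.

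First I would unpack the definition of ``closed under directed homotopy colimits'' in the compactly generated algebraic setting: by hypothesis there exists a small dg category $\Acal$ and a triangle equivalence $F\colon \Tcal\to \Dcal(\Acal)$ such that $F(\Xcal)$ is closed under directed homotopy colimits in $\Dcal(\Acal)$. Since the class of pure triangles is defined via the restricted Yoneda functor and is therefore preserved and reflected by triangle equivalences, closure of $\Xcal$ under pure quotients in $\Tcal$ transfers to closure of $F(\Xcal)$ under pure quotients in $\Dcal(\Acal)$.

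Then I would apply Proposition \ref{coprod colim} to $F(\Xcal)$: since $F(\Xcal)$ is closed under pure quotients and under directed homotopy colimits in $\Dcal(\Acal)$, it is closed under coproducts. Transporting back along $F^{-1}$, the subcategory $\Xcal$ is closed under coproducts in $\Tcal$. Combined with closure under pure quotients, Theorem \ref{approx for def}(1) then yields that $\Xcal$ is precovering in $\Tcal$.

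There is no real obstacle here; the only mild subtlety is the dependence of the notion of ``directed homotopy colimit'' on the chosen dg model, but since the definition only asserts existence of one such $\Acal$, and the subsequent arguments are carried out entirely inside $\Dcal(\Acal)$, this causes no difficulty.
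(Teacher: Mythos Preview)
Your proposal is correct and follows essentially the same route as the paper: both argue that Proposition \ref{coprod colim} converts closure under directed homotopy colimits (plus pure quotients) into closure under coproducts, and then Theorem \ref{approx for def}(1) finishes. You have simply spelled out the transfer along the equivalence $F\colon \Tcal\to \Dcal(\Acal)$ that the paper leaves implicit.
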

\begin{proof}
This is immediate from Theorem \ref{approx for def} and Proposition \ref{coprod colim}.
\end{proof}

\begin{remark}\label{pure q pure s2}
Using Remark \ref{pure q pure s}, it can be seen that a subcategory $\Xcal$ of a compactly generated algebraic triangulated category $\Tcal$ that is closed under directed homotopy colimits and pure subobjects is also closed under pure quotients. Hence, such a subcategory $\Xcal$ is also precovering.
\end{remark}

\begin{example}\label{Ex PGen}
Consider an object $X$ in a compactly generated algebraic triangulated category $\Tcal$. Let $\mathsf{PGen}(X)$ denote the subcategory of $\Tcal$ whose objects are pure quotients of coproducts of $X$. Dually, let $\mathsf{PCogen}(X)$ denote the subcategory of $\Tcal$ whose objects are pure subobjects of products of $X$. Using the fact that products and coproducts preserve pure epimorphisms and pure monomorphisms, it then follows from Theorem \ref{approx for def} that  $\mathsf{PGen}(X)$ is precovering and that $\mathsf{PCogen}(X)$ is preenveloping.
\end{example}

Note that, in view of Remark \ref{pure q pure s2}, any subcategory closed under pure subobjects, directed homotopy colimits and products will be simultaneously precovering and preenveloping. It turns out that these closure conditions actually characterise an important type of subcategories.

\begin{definition}
Let $\Tcal$ be a compactly generated triangulated category. An additive (covariant) functor $F\colon\Tcal\longrightarrow \mathsf{Mod}(\mathbb{Z})$ is said to be \textbf{coherent} if there is an exact sequence
$$\xymatrix{\Hom_\Tcal(Y,-)\ar[rr]^{\Hom_\Tcal(f,-)}&&\Hom_\Tcal(X,-)\ar[rr]&&F\ar[rr]&&0}$$
with $f\colon X\longrightarrow Y$ a map in $\Tcal^c$. A subcategory $\Xcal$ of $\Tcal$ is said to be \textbf{definable} if there is a set of coherent functors $(F_i)_{i\in I}$ such that $F_i(X)=0$ for all $i$ in $I$ if and only if $X$ lies in $\Xcal$.
\end{definition}

The following theorem was originally stated in the more general setting of stable derivators (\cite[Theorem 3.11]{Laking}). Derived categories of small dg categories lie at the base of a stable derivator since they are homotopy categories of nice model categories (\cite[Proposition 1.30]{Groth}). We may, therefore, restate the theorem in our setting and furthermore, we use Proposition \ref{coprod colim} to `simplify' the original statement.
\begin{theorem}\label{closure for definable}
The following are equivalent for a subcategory $\Xcal$ of a compactly generated algebraic triangulated category.
\begin{enumerate}
\item $\Xcal$ is definable;
\item $\Xcal$ is closed under pure subobjects, products and directed homotopy colimits.
\item $\Xcal$ is closed under pure subobjects, products and pure quotients.
\end{enumerate}
\end{theorem}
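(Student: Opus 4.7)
The plan is to leverage the derivator-theoretic characterisation already cited from \cite[Theorem 3.11]{Laking} for the equivalence (1) $\Leftrightarrow$ (2), so that only (2) $\Leftrightarrow$ (3) remains to be proved. The derivator hypothesis is met because the derived category of any small dg category arises as the homotopy category of the projective model structure on $\Ccal(\Acal)$ and hence underlies a strong and stable derivator by \cite[Proposition 1.30]{Groth}.

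The direction (2) $\Rightarrow$ (3) I would dispose of immediately by invoking Remark \ref{pure q pure s2}: closure under pure subobjects and directed homotopy colimits in a compactly generated algebraic triangulated category automatically forces closure under pure quotients.

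For the substantive direction (3) $\Rightarrow$ (2), my strategy is to route through Proposition \ref{coprod colim}. Since $\Xcal$ is already closed under pure quotients, that proposition tells us that $\Xcal$ is closed under directed homotopy colimits as soon as we establish closure under coproducts. The key observation I will exploit is that the canonical morphism
$$\coprod_{i\in I} X_i \longrightarrow \prod_{i\in I} X_i$$
is a pure monomorphism in $\Tcal$ for any family $(X_i)_{i\in I}$. To verify this, I would note that $\mathbf{y}$ preserves both products (by the universal property of $\Hom$) and coproducts (since its values are evaluated on compact objects), so the image of this morphism under $\mathbf{y}$ is the canonical map $\coprod \mathbf{y}X_i \to \prod \mathbf{y}X_i$ in the locally finitely presented category $\mathsf{Mod}(\Tcal^c)$. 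The latter is the directed colimit, over finite subsets $J \subseteq I$, of the split monomorphisms $\coprod_{j\in J} \mathbf{y}X_j \hookrightarrow \prod_{i\in I} \mathbf{y}X_i$ given by extension by zero, and hence is a pure monomorphism in $\mathsf{Mod}(\Tcal^c)$. Appealing to the consequence of Lemma \ref{pure mono} recorded immediately after its statement, one concludes that $\coprod X_i \to \prod X_i$ is itself a pure monomorphism in $\Tcal$. Hence for $(X_i)_{i\in I}$ in $\Xcal$, the product $\prod X_i$ lies in $\Xcal$ by hypothesis, and $\coprod X_i$ is then a pure subobject of it, so $\coprod X_i$ lies in $\Xcal$ as required.

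I do not anticipate any serious obstacle: the heavy lifting is handled by \cite{Laking} and by Proposition \ref{coprod colim}, so the new content is purely the reduction from coproducts to pure subobjects of products. The only mildly subtle point is that this pure monomorphism between coproduct and product must be verified via the functor $\mathbf{y}$ into $\mathsf{Mod}(\Tcal^c)$ rather than directly in $\Tcal$, which has no intrinsic notion of subobject; but this is precisely the purpose of the pure-exact structure recalled in Subsection \ref{Sec: comp gen}.
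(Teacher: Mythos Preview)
Your proposal is correct and follows essentially the same route as the paper: (1) $\Leftrightarrow$ (2) via \cite{Laking}, (2) $\Rightarrow$ (3) via Remark \ref{pure q pure s2}, and (3) $\Rightarrow$ (2) by first deducing closure under coproducts from the fact that $\coprod X_i$ is a pure subobject of $\prod X_i$, then applying Proposition \ref{coprod colim}. The only difference is that the paper asserts ``any coproduct is a pure subobject of the corresponding product'' without justification, whereas you spell out the verification through $\mathbf{y}$ and Lemma \ref{pure mono}; your added detail is sound.
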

\begin{proof}
(1) $\Leftrightarrow$ (2): This statement holds in our setting by \cite[Theorem 3.11]{Laking}. 

 (2) $\Rightarrow$ (3):  It follows from Remark \ref{pure q pure s2} that if $\Xcal$ is closed under pure subobjects and directed homotopy colimits, then it is also closed under pure quotients.

(3) $\Rightarrow$ (2): Since any coproduct is a pure subobject of the corresponding product, if $\Xcal$ is closed under pure subobjects and products it is then closed under coproducts as well. If additionally $\Xcal$ is closed under pure quotients, then by Proposition \ref{coprod colim} it is also closed under directed homotopy colimits.
\end{proof}

As expected, definable subcategories therefore have nice approximation-theoretic properties.

\begin{corollary}\label{definable precover preenvelope}
If $\Xcal$ is a definable subcategory of a compactly generated algebraic triangulated category, then $\Xcal$ is precovering and preenveloping.
\end{corollary}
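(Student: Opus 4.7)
The plan is to deduce this corollary directly by combining the characterisation of definable subcategories in Theorem \ref{closure for definable} with the sufficient closure conditions for being precovering/preenveloping established in Theorem \ref{approx for def}. Since the bulk of the machinery has already been set up, the argument should be essentially a two-line chase through closure conditions.

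First I would invoke Theorem \ref{closure for definable}, in particular the equivalence $(1) \Leftrightarrow (3)$, to conclude that $\Xcal$ is closed under pure subobjects, products, and pure quotients. From this I would immediately apply Theorem \ref{approx for def}(2), whose hypothesis (closed under pure subobjects and products) is satisfied verbatim, to conclude that $\Xcal$ is preenveloping.

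For the precovering statement, the hypothesis of Theorem \ref{approx for def}(1) requires closure under pure quotients and coproducts. The first is given directly, and the second follows from closure under products and pure subobjects via the standard observation (already used in the proof of Theorem \ref{closure for definable}, implication $(3)\Rightarrow(2)$) that any coproduct embeds as a pure subobject of the corresponding product. Hence $\Xcal$ is closed under coproducts and Theorem \ref{approx for def}(1) yields that $\Xcal$ is precovering.

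There is really no obstacle here: the work has been done in assembling Theorem \ref{closure for definable} and Theorem \ref{approx for def}, so the corollary is a formal consequence. The only minor point worth spelling out is the passage from products plus pure subobjects to coproducts, but this is immediate from the pure embedding $\coprod_{i} X_i \hookrightarrow \prod_{i} X_i$ available in any compactly generated triangulated category.
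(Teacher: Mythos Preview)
Your proposal is correct and follows exactly the same route as the paper, which simply states that the result is an immediate consequence of Theorem~\ref{closure for definable} and Theorem~\ref{approx for def}. You have merely unpacked the one step (closure under coproducts via the pure embedding $\coprod X_i \hookrightarrow \prod X_i$) that the paper leaves implicit.
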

\begin{proof}
This is an immediate consequence of Theorem \ref{closure for definable} and Theorem \ref{approx for def}.
\end{proof}

\begin{remark}
Note that it was also shown in \cite[Proposition 4.5]{AMV3} that definable subcategories are preenveloping in any compactly generated triangulated category. 
\end{remark}

Using Theorem \ref{Neeman}, we provide sufficient conditions for a subcategory to be covering or enveloping.

\begin{corollary}\label{approx t-str}
Let $\Xcal$ be a subcategory of a compactly generated algebraic triangulated category.
\begin{enumerate}
\item If $\Xcal$ is suspended and closed under coproducts and pure quotients, then $\Xcal$ is covering.
\item If $\Xcal$ is cosuspended and closed under products and pure subobjects, then $\Xcal$ is enveloping.
\end{enumerate}
In particular, if $\Xcal$ is triangulated and definable, then it is both covering and enveloping.
\end{corollary}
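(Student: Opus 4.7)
The plan is to obtain this corollary as an almost immediate combination of results already assembled earlier in the paper: Theorem \ref{approx for def} supplies the precovering/preenveloping property from the stated purity conditions, while Theorem \ref{Neeman} upgrades these to covering/enveloping in the presence of (co)suspension. Theorem \ref{closure for definable} will then handle the final ``In particular'' clause.

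For part (1), I would first apply Theorem \ref{approx for def}(1) to the hypothesis that $\Xcal$ is closed under coproducts and pure quotients, obtaining that $\Xcal$ is precovering in $\Tcal$. Since $\Xcal$ is by assumption suspended, Theorem \ref{Neeman}(1) then promotes this to the stronger conclusion that $\Xcal$ is in fact covering (indeed, coreflective). Part (2) proceeds in an entirely dual fashion: Theorem \ref{approx for def}(2) produces $\Xcal$-preenvelopes from closure under products and pure subobjects, and Theorem \ref{Neeman}(2) upgrades these to the existence of $\Xcal$-envelopes under the cosuspended hypothesis.

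For the ``In particular'' statement, assume $\Xcal$ is both triangulated and definable. Being triangulated, $\Xcal$ is simultaneously suspended and cosuspended. Being definable, Theorem \ref{closure for definable} guarantees that $\Xcal$ is closed under pure subobjects, products, and pure quotients; closure under coproducts then follows from the observation (used in the proof of (3) $\Rightarrow$ (2) of that theorem) that any coproduct is a pure subobject of the corresponding product. Hence $\Xcal$ satisfies the hypotheses of both (1) and (2), and is therefore simultaneously covering and enveloping.

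There is no real obstacle here: the statement is essentially a concatenation of three earlier results, and the only minor verification is that a definable subcategory inherits closure under coproducts from its closure under products and pure subobjects, which is built into the proof of Theorem \ref{closure for definable}.
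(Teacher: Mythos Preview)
Your proof is correct and matches the paper's approach exactly: parts (1) and (2) follow by combining Theorem~\ref{approx for def} with Theorem~\ref{Neeman}, and the final clause follows from Theorem~\ref{closure for definable} together with (1) and (2). The paper's proof is terser but makes precisely the same deductions, and even notes the alternative route for the last statement via Theorem~\ref{Neeman} and Corollary~\ref{definable precover preenvelope}.
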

\begin{proof}
Statements (1) and (2) follow from Theorem \ref{Neeman} and Theorem \ref{approx for def}. The last statement follows from Theorem \ref{closure for definable}, combining (1) and (2) (or, alternatively, from Theorem \ref{Neeman} and Corollary \ref{definable precover preenvelope}).
\end{proof}

\begin{remark}\label{rem pure sub}
Suspended subcategories closed under coproducts are also closed under directed homotopy colimits. This is a conjugation of \cite[Proposition 2.4]{StPo} and \cite[Theorem 7.13]{PS}. It then follows Remark \ref{pure q pure s2} and part (1) of the corollary above that if $\Xcal$ is suspended and closed under coproducts and pure subobjects, then $\Xcal$ is covering. 
\end{remark}

Also by Theorem \ref{Neeman}, subcategories satisfying the conditions of the corollary above are part of certain torsion pairs (namely t-structures). That is the topic of our next section.

\section{Torsion pairs in algebraic triangulated categories}\label{Sec: tors}
Typical examples of (pre)covering and (pre)enveloping subcategories of triangulated categories occur in torsion pairs (see Definition \ref{Def: torsion pair}).  Clearly, every torsion class is precovering and every torsionfree class is preenveloping. In the next theorem we will make use of the results of Section \ref{Sec: Approx}, as well as the correspondence given by Theorem \ref{cotorsion torsion} in order to establish sufficient conditions, via closure properties, for a subcategory to be a torsion class. We will need the following lemma.

\begin{lemma}\label{cc pair}
Let $\Fcal$ be an exact category with enough injectives and $\mathcal{M}$ a subcategory of $\Fcal$. Then there is a complete cotorsion pair $(\mathcal{M}, \mathcal{N})$ if and only if $\mathcal{M}$ is a special precovering and idempotent complete subcategory of $\Fcal$.
\end{lemma}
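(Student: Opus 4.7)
The plan is to prove both directions separately. The forward direction is essentially the definition together with an easy closure property, while the backward direction relies on a classical Salce-type construction.

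For the forward direction, assume $(\mathcal{M}, \mathcal{N})$ is a complete cotorsion pair. Then $\mathcal{M}$ is special precovering by definition, so I only need to verify idempotent completeness. This follows because $\mathcal{M} = \Ker\,\Ext^1_\Fcal(-, \mathcal{N})$, and the vanishing of $\Ext^1(-,N)$ is respected by direct summands: if $X \oplus Y \in \mathcal{M}$, then for each $N \in \mathcal{N}$ the functor $\Ext^1_\Fcal(-,N)$ sends the split conflation $0 \to X \to X\oplus Y \to Y \to 0$ to a split sequence, forcing $\Ext^1(X,N) = 0 = \Ext^1(Y,N)$.

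For the backward direction, assume $\mathcal{M}$ is special precovering and idempotent complete, and set $\mathcal{N} := \Ker\,\Ext^1_\Fcal(\mathcal{M}, -)$. First, I would observe that $\mathcal{N}$ is closed under extensions in $\Fcal$: given a conflation $0 \to N_1 \to E \to N_2 \to 0$ with $N_i \in \mathcal{N}$, applying $\Hom_\Fcal(M,-)$ for $M \in \mathcal{M}$ and using the induced long exact sequence shows $\Ext^1_\Fcal(M,E) = 0$. Next I would verify $\mathcal{M} = \Ker\,\Ext^1_\Fcal(-,\mathcal{N})$. The inclusion $\subseteq$ is immediate by definition of $\mathcal{N}$. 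For the reverse, take $X \in \Ker\,\Ext^1_\Fcal(-,\mathcal{N})$ and a special $\mathcal{M}$-precover conflation $0 \to N_1 \to M_1 \to X \to 0$ with $M_1 \in \mathcal{M}$, $N_1 \in \mathcal{N}$. Since $\Ext^1_\Fcal(X,N_1) = 0$, this conflation splits, exhibiting $X$ as a summand of $M_1 \in \mathcal{M}$; idempotent completeness of $\mathcal{M}$ then gives $X \in \mathcal{M}$.

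The crucial step is to show $\mathcal{N}$ is special preenveloping, and this is where I would use enough injectives via Salce's trick. Given $X \in \Fcal$, embed it in an injective object via a conflation $0 \to X \to I \to Y \to 0$; note $I \in \mathcal{N}$ since $\Ext^1_\Fcal(M,I)=0$ for all $M \in \mathcal{M}$. Now take a special $\mathcal{M}$-precover conflation $0 \to N \to M \to Y \to 0$ with $M \in \mathcal{M}$, $N \in \mathcal{N}$, and form the pullback
\begin{equation*}
\xymatrix{
0 \ar[r] & X \ar[r] \ar@{=}[d] & E \ar[r] \ar[d] & M \ar[r] \ar[d] & 0 \\
0 \ar[r] & X \ar[r]           & I \ar[r]         & Y \ar[r]       & 0
}
\end{equation*}
The left column of the pullback square yields a conflation $0 \to N \to E \to I \to 0$, and by the extension-closure of $\mathcal{N}$ established above, $E \in \mathcal{N}$. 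The top row $0 \to X \to E \to M \to 0$ is then the desired special $\mathcal{N}$-preenveloping conflation of $X$.

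The main obstacle is the preenveloping step: combining enough injectives (to start the construction), the extension-closure of $\mathcal{N}$ (which requires the initial long exact sequence argument), and the exactness properties of the pullback in an exact category. The idempotent completeness hypothesis, meanwhile, is used only in the identification $\mathcal{M} = \Ker\,\Ext^1_\Fcal(-,\mathcal{N})$, where it is indispensable to upgrade "summand of an object of $\mathcal{M}$" to "object of $\mathcal{M}$".
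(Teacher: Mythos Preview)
Your proof is correct and follows essentially the same approach as the paper: both directions are argued identically, with the converse defining $\mathcal{N} := \Ker\,\Ext^1_\Fcal(\mathcal{M},-)$, using the splitting of a special precover to obtain $\mathcal{M} = \Ker\,\Ext^1_\Fcal(-,\mathcal{N})$, and then invoking Salce's trick via enough injectives for the special preenveloping property. The only difference is that you spell out the pullback construction in Salce's argument explicitly (including the auxiliary extension-closure of $\mathcal{N}$), whereas the paper simply cites Salce's Lemma.
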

\begin{proof}
It is clear from the definition that a subcategory $\mathcal{M}$ which is part of a complete cotorsion pair $(\mathcal{M},\mathcal{N})$ must be special precovering and idempotent complete. We prove the converse statement. Indeed, let $\mathcal{M}$ be special precovering and idempotent complete, and consider the class $\mathcal{N} := \Ker \Ext^1_\Fcal(\mathcal{M},-)$. We must show that $\mathcal{M}=\Ker \Ext^1_\Fcal(-,\mathcal{N})$.  Clearly $\mathcal{M}$ is contained in $\Ker \Ext^1_\Fcal(-,\mathcal{N})$. Suppose $X$ is in $\Ker \Ext^1_\Fcal(-,\mathcal{N})$ and let 
$$\xymatrix{0\ar[r] & N_1\ar[r]& M_1\ar[r]^f & X\ar[r] &0}$$ be a conflation of $\Fcal$, where $f$ is a special precover of $X$.  Then, since $\Ext^1_\Fcal(X, N_1) = 0$, the sequence is split and so $X$ lies in $\mathcal{M}$.

Finally we must obtain a special $\mathcal{N}$-preenvelope for each object $Y$ in $\Fcal$.  Since $\Fcal$ has enough injectives, we may apply Salce's Lemma (see, for example, \cite[Lemma 2.2.6]{GoebelTrlifaj}) to obtain a conflation $$\xymatrix{0\ar[r] & Y\ar[r]^g& N_2\ar[r] & M_2 \ar[r] &0}$$ with $M_2$ in $\mathcal{M}$ and $N_2$ in $\mathcal{N}$.
\end{proof}

We can now prove the main theorem of this section.  

\begin{theorem}\label{closure tors class}
Let $\Tcal$ be a compactly generated algebraic triangulated category. Then every subcategory $\Xcal$ that is closed under extensions, pure quotients and coproducts is a torsion class. In particular, every definable extension-closed subcategory is a torsion class.
\end{theorem}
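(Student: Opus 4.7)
The plan is to realise $\Tcal$ as the stable category $\underline{\mathscr{C}}$ of cofibrant objects in $\Ccal(\Acal)$ (Proposition \ref{cofibrant Frobenius}), to build a complete cotorsion pair $(\mathcal{M},\mathcal{N})$ in $\mathscr{C}$ whose image under $\phi\colon\mathscr{C}\to\underline{\mathscr{C}}$ recovers $\Xcal$, and then to invoke the bijection of Theorem \ref{cotorsion torsion} to conclude that $\Xcal$ is a torsion class. The natural candidate is $\mathcal{M}:=\pi^{-1}(\Xcal)\cap\mathscr{C}$; note that $\Xcal$ is automatically closed under direct summands (since split epimorphisms are pure), so $\mathcal{M}$ will be idempotent complete.

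The first step is to establish an auxiliary complete cotorsion pair $(\pi^{-1}(\Xcal),\Ycal)$ in the abelian category $\Ccal(\Acal)$. The preimage $\pi^{-1}(\Xcal)$ inherits all three closure conditions from $\Xcal$: it is closed under coproducts by Corollary \ref{colim closed}, under pure quotients by Lemma \ref{transfer purity}, and under extensions because any short exact sequence in $\Ccal(\Acal)$ induces a distinguished triangle in $\Dcal(\Acal)$. Since $\Ccal(\Acal)$ is a well-powered locally finitely presented Grothendieck category with enough injectives, Corollary \ref{cor covering} shows that $\pi^{-1}(\Xcal)$ is covering, Wakamatsu's Lemma (Example \ref{example Wakamatsu}) upgrades this to special precovering, and Salce's Lemma supplies the complete cotorsion pair.

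The main obstacle is transferring this cotorsion pair from $\Ccal(\Acal)$ to the exact subcategory $\mathscr{C}$. Given $X\in\mathscr{C}$, the idea is to start with a special precover $0\to Y\to M\to X\to 0$ in $\Ccal(\Acal)$ and then replace $M$ by a cofibrant model $C_M\twoheadrightarrow M$ whose kernel $K$ lies in $\Wcal_0$. The composition $C_M\twoheadrightarrow X$ yields a short exact sequence $0\to Y'\to C_M\to X\to 0$, where $C_M\in\mathcal{M}$ since $\pi(C_M)\cong\pi(M)\in\Xcal$, and a long-exact-sequence argument, using $\mathcal{M}\subseteq\pi^{-1}(\Xcal)$ and $\mathcal{M}\subseteq\mathscr{C}={}^\perp\Wcal_0$, shows that $\Ext^1_{\Ccal(\Acal)}(\mathcal{M},Y')=0$. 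The delicate point I expect to be the crux is verifying that $Y'\in\mathscr{C}$, so that the sequence is genuinely a conflation in $\mathscr{C}$; this should follow from the fact that the projective model structure of Proposition \ref{model in DG} is stable, making $(\mathscr{C},\Wcal_0)$ a hereditary cotorsion pair and forcing $\Ext^2_{\Ccal(\Acal)}(X,\Wcal_0)=0$.

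Once $\mathcal{M}$ is shown to be special precovering in $\mathscr{C}$, Lemma \ref{cc pair} provides the desired complete cotorsion pair in $\mathscr{C}$, and Theorem \ref{cotorsion torsion} produces the torsion pair $(\Xcal,(\phi\mathcal{N})[1])$ in $\Tcal$. For the ``in particular'' clause, Theorem \ref{closure for definable} characterises definable subcategories as those closed under products, pure subobjects and pure quotients; since any coproduct is a pure subobject of the corresponding product, a definable subcategory is automatically closed under coproducts, so adding extension closure places us exactly in the setting of the first part of the statement.
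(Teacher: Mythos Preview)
Your proposal is correct and takes essentially the same approach as the paper's proof: set $\mathcal{M}=\pi^{-1}(\Xcal)\cap\mathscr{C}$, obtain special $\pi^{-1}(\Xcal)$-precovers in $\Ccal(\Acal)$ via Corollary~\ref{cor covering} and Wakamatsu's Lemma, compose with a cofibrant replacement, check via the Snake Lemma that the new kernel is $\Ext^1$-orthogonal to $\mathcal{M}$, and then apply Lemma~\ref{cc pair} and Theorem~\ref{cotorsion torsion}. The one place where you are more explicit than the paper is in checking that the kernel $Y'$ lies in $\mathscr{C}$ so that the sequence is a conflation there; your intuition that this follows from $(\mathscr{C},\Wcal_0)$ being hereditary is correct, though the clean justification is simply that $\Wcal_0$ (the acyclic objects) is closed under cokernels of monomorphisms by the long exact cohomology sequence, rather than an appeal to stability of the model structure.
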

\begin{proof}
Let $\Acal$ be a small dg category such that $\Tcal=\Dcal(\Acal)$ for which the subcategory $\Xcal$ of $\Tcal$ satisfies the closure conditions above. Let $\mathscr{C}$ be the Frobenius exact subcategory of $\Ccal(\Acal)$ as in Proposition \ref{cofibrant Frobenius}. By Theorem \ref{cotorsion torsion}, it is sufficient to show that there is a cotorsion pair $(\mathcal{M},\mathcal{N})$ in $\mathscr{C}$ such that $\pi(\mathcal{M}) = \Xcal$.  So let $\pi^{-1}(\Xcal)$ be the preimage of $\Xcal$ (see Definition \ref{Def: closed hocolim}) and let $\mathcal{M} = \mathscr{C}\cap\pi^{-1}(\Xcal)$.  By \cite[Proposition~1.1.15]{Becker}, we have that $\pi(\mathcal{M}) = \Xcal$. 
 
We will show that $\mathcal{M}$ is idempotent complete and special precovering, and the result then follows from Lemma \ref{cc pair}. First, since $\Xcal$ is closed under pure quotients, then so is $\pi^{-1}(\Xcal)$ by Lemma \ref{transfer purity}. In particular, $\pi^{-1}(\Xcal)$ is closed under direct summands and, since $\mathscr{C}$ also has this property, it follows that indeed $\mathcal{M}$ is idempotent complete. So it only remains to show that $\mathcal{M}$ is special precovering. Note that, since $\pi^{-1}(\Xcal)$ is also closed under coproducts, it follows from Corollary \ref{cor covering} that it is covering in $\Ccal(\Acal)$ and, moreover, it contains all the projective objects of $\Ccal(\Acal)$ (i.e.~the trivial cofibrant objects in $\Ccal(\Acal)$). Since $\pi^{-1}(\Xcal)$ is also extension-closed it then follows from Wakamatsu's Lemma (see Example \ref{example Wakamatsu}) that for each $N$ in $\Ccal(\Acal)$ there is a short exact sequence
$$\xymatrix{0\ar[r]&K\ar[r]&X\ar[r]^f&N\ar[r]&0}$$
with $K$ in $\mathsf{Ker}\Ext^1_{\Ccal(\Acal)}(\pi^{-1}(\Xcal),-)$ and $X$ in $\pi^{-1}(\Xcal)$. Moreover, the subcategory of cofibrant objects $\mathscr{C}$ is part of a complete cotorsion pair $(\mathscr{C},\Wcal_0)$, where $\Wcal_0$ is the subcategory of $\Ccal(\Acal)$ formed by the acyclic dg modules (see Example \ref{model cotorsion}).  Therefore there exists a short exact sequence
$$\xymatrix{0\ar[r]&W\ar[r]&C\ar[r]^g&X\ar[r]&0}$$
with $W$ in $\Wcal_0$ and $C$ in $\mathscr{C}$.
We claim that $f\circ g\colon C\longrightarrow N$ is a special $\mathcal{M}$-precover. First observe that indeed $C$ lies in $\mathcal{M}$ since $\pi(C)\cong \pi(X)$. It now suffices to check that $\Ext^1_{\Ccal(\Acal)}(\mathcal{M},\Ker(f\circ g))=0$. The Snake Lemma guarantees that there is a short exact sequence
$$\xymatrix{0\ar[r]&W\ar[r]&\Ker(f\circ g)\ar[r]&K\ar[r]&0.}$$
Applying $\Ext^1_{\Ccal(\Acal)}(M,-)$ to the sequence, with $M$ in $\mathcal{M}$, yields $\Ext^1_{\Ccal(\Acal)}(M, \Ker(f\circ g))=0$ as wanted.
\end{proof}

\begin{remark}
It is well-known that in a well-powered cocomplete abelian category, torsion classes are precisely those which are closed under extensions, coproducts and quotients. The theorem above provides us with a triangulated analogue for the sufficiency of these closure conditions. Note, however, that while torsion classes in abelian categories are covering, this is not necessarily the case in triangulated categories.
\end{remark}

This theorem allows us to construct a series of torsion pairs. 

\begin{corollary}\label{ex torsion pairs}
Let $\Tcal$ be a compactly generated algebraic triangulated categories. Then the following classes are torsion classes in $\Tcal$.
\begin{enumerate}
\item $P^{\perp}$ for any pure-projective object $P$;
\item ${}^{\perp}E$ for any pure-injective object $E$;
\item $\mathsf{PGen}(P)$ for a pure-projective object $P$ satisfying $\Hom_\Tcal(P,P^{(I)}[1])=0$ for any set $I$.
\end{enumerate}
\end{corollary}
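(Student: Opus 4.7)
In all three parts the strategy is to apply Theorem~\ref{closure tors class}, so the task is to verify closure under extensions, pure quotients, and coproducts in each case. Underlying parts (1) and (2) is the observation that for a pure-projective object $P$, the functor $\Hom_\Tcal(P,-)$ is exact on every pure triangle; dually, for $E$ pure-injective, $\Hom_\Tcal(-,E)$ is exact on pure triangles. For the first assertion, given a pure triangle $A\to B\to C\to A[1]$ and a map $g:P\to C$, the base-change along $g$ is a pure triangle $A\to B'\to P\to A[1]$ (its connecting map equals $h\circ g$, where the original connecting map $h$ vanishes under $\mathbf{y}$) that splits by pure-projectivity of $P$, yielding a lift $P\to B$ of $g$.

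For part (1), extension- and pure-quotient-closure of $P^\perp$ follow from the long exact sequence and from the observation above, respectively. The main obstacle is coproduct-closure. For this I complete the canonical map $\coprod_i X_i\to\prod_i X_i$ to a triangle and exhibit its $\mathbf{y}$-image as the directed colimit, over finite $F\subset I$, of the $\mathbf{y}$-image of the split triangles $\prod_F X_i\hookrightarrow\prod_I X_i\twoheadrightarrow\prod_{I\setminus F}X_i$; by Lemma~\ref{pure mono} this shows the triangle is pure. Applying $\Hom_\Tcal(P,-)$ (exact on pure triangles) then gives an injection $\Hom_\Tcal(P,\coprod_i X_i)\hookrightarrow\prod_i\Hom_\Tcal(P,X_i)$, which vanishes when each $X_i\in P^\perp$. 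Part (2) is dual, with coproduct-closure now immediate from $\Hom_\Tcal(\coprod_i X_i,E)=\prod_i\Hom_\Tcal(X_i,E)$.

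For part (3), closure of $\mathsf{PGen}(P)$ under coproducts and pure quotients is tautological, as coproducts and compositions of pure epis are pure epis. The substantive and hardest step is extension-closure, which is where the hypothesis $\Hom_\Tcal(P,P^{(I)}[1])=0$ enters. Given a triangle $X\to M\to Y\to X[1]$ and pure epis $\alpha:P^{(I)}\twoheadrightarrow X$, $\beta:P^{(J)}\twoheadrightarrow Y$, apply exactness of $\Hom_\Tcal(P^{(J)},-)$ to the shift of the pure triangle associated to $\alpha$: combined with the hypothesis this gives $\Hom_\Tcal(P^{(J)},X[1])=0$, so $\beta$ lifts to some $\tilde\beta:P^{(J)}\to M$ along $M\to Y$. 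The candidate pure epi is $\phi:P^{(I\sqcup J)}\to M$ with components $(X\to M)\circ\alpha$ and $\tilde\beta$. To verify $\phi$ is a pure epi, I show surjectivity of $\mathbf{y}(\phi)$: given $\gamma:T\to M$ with $T$ compact, lift $(M\to Y)\circ\gamma$ through $\beta$ to $\gamma_J$, subtract $\tilde\beta\circ\gamma_J$ from $\gamma$ to obtain a map factoring through $X\to M$, and lift the residue through $\alpha$ to $\gamma_I$; then $(\gamma_I,\gamma_J):T\to P^{(I\sqcup J)}$ lifts $\gamma$ along $\phi$, proving $M\in\mathsf{PGen}(P)$.
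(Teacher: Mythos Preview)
Your proof is correct. Parts (2) and (3) follow essentially the same route as the paper (indeed, for (3) the paper asserts without further comment that the map $(r\circ p,-\tilde q)$ is a pure epimorphism, while you spell out the lifting argument against compact objects, which is a welcome addition).

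The genuine difference is in part (1). You establish coproduct-closure of $P^\perp$ by exhibiting the canonical map $\coprod_i X_i \to \prod_i X_i$ as a pure monomorphism and then using that $\Hom_\Tcal(P,-)$ is left exact on pure triangles. The paper instead observes that $P^\perp$ is closed under pure subobjects, pure quotients \emph{and products}, and then invokes Theorem~\ref{closure for definable} to conclude that $P^\perp$ is definable; coproduct-closure is then a consequence of definability. Your argument is more elementary and self-contained (it does not need the characterisation of definable subcategories), while the paper's argument yields the stronger conclusion that $P^\perp$ is definable, which is of independent interest and is used implicitly later (e.g.\ in the discussion of smashing subcategories). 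Note also that the pure-monomorphism fact you prove is exactly the observation used in the proof of (3)$\Rightarrow$(2) in Theorem~\ref{closure for definable}, so the two arguments are closely related under the hood.
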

\begin{proof}
(1) If $P$ is pure-projective, then it is easy to see that $P^\perp$ is closed under pure subobjects, pure quotients and products. Therefore, by Theorem \ref{closure for definable}, $P^\perp$ is definable. In particular it is also closed under coproducts. Clearly  $P^\perp$ is extension-closed and, therefore, it satisfies the assumptions of Theorem \ref{closure tors class}.

(2) If $E$ is pure-injective, then it is easy to see that ${}^\perp E$ satisfies the assumptions of Theorem \ref{closure tors class}.

(3) By Example \ref{Ex PGen}, the subcategory $\mathsf{PGen}(P)$ is closed under coproducts and pure quotients. In view of Theorem \ref{closure tors class} it remains to show that it is extension-closed. First we observe that, since  $\Hom_\Tcal(P,P^{(I)}[1])=0$, we in fact have that  $\Hom_\Tcal(P,M[1])=0$ for any $M$ in $\mathsf{PGen}(P)$. Indeed, if $f\colon P^{(I)}\longrightarrow M$ is a pure epimorphism, then so is $f[1]$ and, therefore, $\Hom_\Tcal(P,f[1])$ is surjective (since $P$ is pure-projective). This proves the claim. Finally, consider a triangle
$$\xymatrix{X\ar[r]^r&Y\ar[r]^s&Z\ar[r]^t&X[1]}$$
with $X$ and $Z$ in $\mathsf{PGen}(P)$. Given a pure epimorphism $q\colon P^{(J)}\longrightarrow Z$, since $t\circ q=0$, there is a morphism $\tilde{q}\colon P^{(J)}\longrightarrow Y$ such that $t\circ \tilde{q}=q$. Finally, given also a pure epimorphism $p\colon P^{(I)}\longrightarrow X$, it follows that the morphism $(r\circ p,-\tilde{q})\colon P^{(I)}\oplus P^{(J)}\longrightarrow Y$ is a pure epimorphism, proving that $Y$ lies in $\mathsf{PGen}(P)$. 
\end{proof}

\begin{remark}
We observe that the torsion classes considered above relate to the following three examples.
\begin{enumerate}
\item It is shown in \cite{AI} that, in a compactly generated triangulated category $\Tcal$, the orthogonal $\Scal^\perp$ to a set $\Scal$ of compact objects (or, equivalently, the orthogonal $S^\perp$ to the pure-projective object $S$ obtained as the coproduct of all objects in $\Scal$) is a torsionfree class. In \cite{StPo} it is then shown that the subcategory $\Scal^\perp$ is also a torsion class if the category $\Tcal$ is furthermore assumed to be algebraic. The statement (1) of the Corollary above is a slight generalisation of this second fact. 
\item Statement (2) of the above Corollary is, to some extent, dual to that of \cite{AI}, where it is shown that compact objects generate torsion pairs. Here we show that pure-injective objects cogenerate torsion pairs in triangulated categories, just as they cogenerate cotorsion pairs in module categories (see, for example, \cite{GoebelTrlifaj}). Certain objects cogenerating torsion pairs were studied in \cite{OPS} - these are called 0-cocompact. All known examples of 0-cocompact objects are pure-injective. On the other hand, it follows from the definition that a 0-cocompact object $X$ has the property that ${}^\perp X$ is closed under countable products. There are examples of pure-injective objects that do not satisfy this property (see \cite{AH}) and that, therefore, are not 0-cocompact. In general, however, the relation between the notions of pure-injectivity and 0-cocompactness remains rather mysterious.
\item Statement (3) of the above Corollary is analogous to a statement well-known for module categories, namely that given a module $M$ over a ring $R$ such that $\mathsf{Ext}^1_R(M,\mathsf{Gen}(M))=0$, then $\mathsf{Gen}(M)$ is a torsion class - see for example \cite[Lemma 2.3]{AMV1}.
\end{enumerate}
\end{remark}

Theorem \ref{closure tors class} allows us to give a criterion for a torsionfree class to be also a torsion class. 
 
\begin{definition} If a subcategory $\Zcal$ of $\Tcal$ is simultaneously a torsion and a torsionfree class, then we call $\Zcal$ a \textbf{TTF class} and $({}^\perp\Zcal,\Zcal,\Zcal^\perp)$ a \textbf{TTF triple}.  Moreover, we say that the torsion pairs $({}^\perp\Zcal,\Zcal)$ and $(\Zcal,\Zcal^\perp)$ are \textbf{adjacent}.  \end{definition}

\begin{proposition}\label{Cor: def torsf is TTF}
Let $\Tcal$ be a compactly generated algebraic triangulated category. Every torsionfree class closed under coproducts and pure quotients is a TTF class. In particular, every definable torsion free class is a TTF class.
\end{proposition}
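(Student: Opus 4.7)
The plan is to invoke Theorem \ref{closure tors class}, which tells us that any subcategory of $\Tcal$ closed under extensions, pure quotients and coproducts is a torsion class. Since our hypothesis already provides closure under coproducts and pure quotients, the only missing ingredient is closure under extensions.

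First I would observe that since $\Vcal$ is, by assumption, a torsionfree class, there exists a torsion class $\Ucal$ with $\Vcal = \Ucal^\perp$. Closure of $\Vcal$ under extensions is then a standard and purely formal consequence of being a right $\Hom$-orthogonal in a triangulated category: given a triangle
$$\xymatrix{X\ar[r] & Y\ar[r] & Z\ar[r] & X[1]}$$
with $X, Z \in \Vcal$, applying $\Hom_\Tcal(U,-)$ for any $U$ in $\Ucal$ yields an exact sequence with $\Hom_\Tcal(U,X) = \Hom_\Tcal(U,Z) = 0$, forcing $\Hom_\Tcal(U,Y) = 0$ and hence $Y \in \Ucal^\perp = \Vcal$. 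With extension-closure in hand, Theorem \ref{closure tors class} immediately yields that $\Vcal$ is a torsion class, so $\Vcal$ is both a torsion class and a torsionfree class, i.e.\ a TTF class.

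For the second assertion, it suffices to show that a definable torsionfree class satisfies the closure hypotheses of the first part. By Theorem \ref{closure for definable}, a definable subcategory is closed under products, pure subobjects, and pure quotients. Since every coproduct embeds as a pure subobject into the corresponding product (a fact immediate from the description of pure monomorphisms in Lemma \ref{pure mono}, or alternatively from Proposition \ref{coprod colim} together with closure under pure subobjects), such a subcategory is automatically closed under coproducts as well, and the first part applies.

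I do not anticipate any real obstacle: the argument is a direct combination of Theorem \ref{closure tors class}, the formal extension-closure of $\Hom$-orthogonals in a triangulated category, and Theorem \ref{closure for definable}. The only mildly subtle point is observing that coproducts are pure subobjects of products, which is the only place where one might be tempted to look for a gap, but it is immediate in the compactly generated setting.
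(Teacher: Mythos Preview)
Your proposal is correct and follows essentially the same approach as the paper: the paper's proof is the single sentence ``This follows from Theorem \ref{closure tors class} using the fact that torsionfree classes are extension-closed,'' and your argument simply unpacks this, supplying the (standard) verification that $\Ucal^\perp$ is extension-closed and that definable subcategories are closed under coproducts via the pure-subobject-of-product observation (the latter is exactly the argument the paper uses in the proof of Theorem \ref{closure for definable}, (3)$\Rightarrow$(2)).
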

\begin{proof}
This follows from Theorem \ref{closure tors class} using the fact that torsionfree classes are extension-closed.
\end{proof}

The question of when torsion pairs admit adjacent torsion pairs is extensively approached in \cite[Section 3]{Bondarko}. TTF classes are interesting from many different points of view. Examples of suspended or cosuspended TTF classes arise naturally in silting theory (see, for example, \cite{MV2}). In this case, the torsion pairs involved are t-structures or co-t-structures. 

\begin{definition} If $(\Ucal,\Vcal)$ is a torsion pair, then we say that it is 
\begin{itemize}
\item a \textbf{t-structure} if $\Ucal$ is suspended (or, equivalently, if $\Vcal$ is cosuspended); 
\item a \textbf{co-t-structure} if $\Ucal$ is cosuspended (or, equivalently, if $\Vcal$ is suspended);
\item \textbf{left nondegenerate} if $\bigcap_{i\in \Z} \Ucal[i] = 0$; 
\item \textbf{right nondegenerate} if $\bigcap_{i\in \Z} \Vcal[i] =0$.
\end{itemize}
If $(\Ucal,\Vcal)$ is a t-structure, we say that $\Ucal$ is an \textbf{aisle}, $\Vcal$ is a \textbf{coaisle} and  $\Ucal[-1]\cap \Vcal$ is the \textbf{heart} of $(\Ucal,\Vcal)$.
\end{definition}

It is well-known (see \cite{BBD}) that the heart $\Hcal$ of a t-structure in a triangulated category $\Tcal$ is an abelian category and that there is a naturally defined cohomological functor $H^0\colon\Tcal\longrightarrow \Hcal$. Moreover, in this case, torsion precovers and torsionfree preenvelopes can be chosen functorially (and we will denote the approximation triangle for an object $X$ in $\Tcal$ by $U_X \rightarrow X \rightarrow V_X\rightarrow U_X[1]$).  Although this statement goes back to the definition of t-structure in \cite{BBD}, it can also be obtained as a consequence of Theorem \ref{Neeman}. In fact, the following is a well-known result due to Keller and Vossieck.

\begin{theorem}[{\cite[Proposition 1.1]{KV}}]\label{KV}
Let $\Tcal$ be a triangulated category and $\Ucal$ a suspended subcategory of $\Tcal$. The following statements are equivalent.
\begin{enumerate}
\item There is a t-structure $(\Ucal,\Ucal^\perp)$ is $\Tcal$;
\item The inclusion of $\Ucal$ in $\Tcal$ admits a right adjoint;
\item The inclusion of $\Ucal^\perp$ in $\Tcal$ admits a left adjoint.
\end{enumerate}
\end{theorem}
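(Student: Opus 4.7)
The plan is to reduce the two equivalences to Theorem~\ref{Neeman}, combined with a direct construction of the approximation triangles from the adjunctions. First I observe that when $\Ucal$ is suspended, $\Ucal^\perp$ is automatically cosuspended: for $V\in\Ucal^\perp$ and $U\in\Ucal$, the identification $\Hom_\Tcal(U,V[-1])=\Hom_\Tcal(U[1],V)$ vanishes since $U[1]\in\Ucal$. Hence Theorem~\ref{Neeman}(1) already gives the equivalence between (2) and ``$\Ucal$ is precovering in $\Tcal$'', and Theorem~\ref{Neeman}(2) gives the equivalence between (3) and ``$\Ucal^\perp$ is preenveloping in $\Tcal$''.

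The implication (1)$\Rightarrow$(2) is then immediate: if $(\Ucal,\Ucal^\perp)$ is a t-structure, then $\Ucal$ is a torsion class and the approximation triangles furnish $\Ucal$-precovers for every object; dually for (1)$\Rightarrow$(3). For (2)$\Rightarrow$(1) I would let $R$ denote the right adjoint to $\iota\colon\Ucal\hookrightarrow\Tcal$ with counit $\epsilon$, and for each $X$ form the triangle $\iota R(X)\xrightarrow{\epsilon_X}X\to V_X\to \iota R(X)[1]$. Since $\iota R(X)\in\Ucal$, it remains to verify $V_X\in\Ucal^\perp$. Applying $\Hom_\Tcal(\iota U,-)$ for $U\in\Ucal$, the adjunction isomorphism $\Hom_\Tcal(\iota U,\iota R(X))\cong\Hom_\Tcal(\iota U,X)$ forces, via the long exact sequence, the group $\Hom_\Tcal(\iota U,V_X)$ to embed into the kernel of the map $(\epsilon_X[1])_*\colon \Hom_\Tcal(\iota U,\iota R(X)[1])\to\Hom_\Tcal(\iota U,X[1])$. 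Using that $\Ucal$ is closed under positive shifts (so $\iota R(X)[1]\in\Ucal$), the universal property of the counit at $X[1]$ produces a comparison morphism $\xi\colon R(X)[1]\to R(X[1])$ in $\Ucal$ with $\epsilon_X[1]=\epsilon_{X[1]}\circ\iota\xi$, and a second application of the adjunction (now at $X[1]$) reduces the desired vanishing to the injectivity of $\Hom_\Ucal(U,\xi)$. The implication (3)$\Rightarrow$(1) is handled dually, using the unit of the left adjoint to $\Ucal^\perp\hookrightarrow\Tcal$ to build the triangle whose torsion part must be shown to lie in $\Ucal$.

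The hard part is the last injectivity step in (2)$\Rightarrow$(1), i.e.\ verifying that $V_X$ lies in $\Ucal^\perp$. A naive approach via the long exact sequence leads to the circular requirement $\Hom_\Tcal(\iota U[1],V_X)=0$; the genuine argument must use the suspended closure of $\Ucal$ in an essential way — for instance through an octahedral argument on the composable pair $\iota R(X)[1]\xrightarrow{\iota\xi}\iota R(X[1])\xrightarrow{\epsilon_{X[1]}}X[1]$, which relates the fibre of $\xi$ to comparison data between the approximation triangles at $X$ and $X[1]$ and thereby yields the required injectivity.
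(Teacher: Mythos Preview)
The paper does not give its own proof of Theorem~\ref{KV}: it is simply quoted from \cite[Proposition~1.1]{KV}, with no argument supplied. So there is nothing in the paper to compare your attempt against.

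That said, your outline is structurally sound and the reduction via Theorem~\ref{Neeman} is legitimate (note, however, that the paper presents Theorem~\ref{Neeman} as logically parallel to Theorem~\ref{KV} rather than prior to it, so in a self-contained treatment you would want to avoid any appearance of circularity). The genuine issue is that your proof of $(2)\Rightarrow(1)$ stops short of actually establishing the key injectivity: you correctly isolate the problem---showing that $(\epsilon_X[1])_*\colon\Hom_\Tcal(U,R(X)[1])\to\Hom_\Tcal(U,X[1])$ is injective for $U\in\Ucal$---and you point in the right direction (an octahedral-type argument exploiting that $\Ucal$ is extension-closed), but you do not carry it out.

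Here is one way to finish. Given $g\colon U\to R(X)[1]$ with $\epsilon_X[1]\circ g=0$, complete $g$ to a triangle $R(X)\to U'\to U\xrightarrow{g}R(X)[1]$. Since $\Ucal$ is closed under extensions, $U'\in\Ucal$. The vanishing $\epsilon_X[1]\circ g=0$ forces $\epsilon_X$ to factor as $R(X)\to U'\xrightarrow{k}X$ for some $k$. Because $U'\in\Ucal$, the counit property gives $k=\epsilon_X\circ m$ for some $m\colon U'\to R(X)$; then $\epsilon_X\circ m\circ(R(X)\to U')=\epsilon_X$, and since $\Hom_\Tcal(R(X),\epsilon_X)$ is a bijection, $m\circ(R(X)\to U')=\mathrm{id}_{R(X)}$. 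Thus $R(X)\to U'$ is split mono, the triangle splits, and $g=0$. This is precisely the ``suspended closure used essentially'' that you allude to; once written out, the implication $(2)\Rightarrow(1)$ is complete, and $(3)\Rightarrow(1)$ is genuinely dual.
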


Theorem \ref{KV} together with Theorem \ref{Neeman} allows us to restate Corollary \ref{approx t-str} in the following way.

\begin{proposition}\label{adjoints}
Let $\Xcal$ be a subcategory of a compactly generated algebraic triangulated category.
\begin{enumerate}
\item If $\Xcal$ is suspended and closed under coproducts and pure quotients, then $(\Xcal,\Xcal^\perp)$ is a t-structure.
\item If $\Xcal$ is cosuspended and closed under products and pure subobjects, then $({}^\perp\Xcal,\Xcal)$ is a t-structure.
\end{enumerate}
In particular, if $\Xcal$ is triangulated and definable, then both $({}^\perp\Xcal,\Xcal)$ and $(\Xcal,\Xcal^\perp)$ are t-structures i.e.~$\Xcal$ is a triangulated TTF class.
\end{proposition}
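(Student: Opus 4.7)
The plan is to combine the approximation-theoretic results of Section \ref{Sec: Approx} with the translation, provided by Theorems \ref{Neeman} and \ref{KV}, between approximation properties of (co)suspended subcategories and the existence of adjoints to the inclusion functor. In fact, once the present framework is in place this proposition is essentially a bookkeeping exercise, so I would present it as a short chain of implications.

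For part (1), the hypotheses together with Theorem \ref{approx for def}(1) immediately give that $\Xcal$ is precovering in $\Tcal$. Since $\Xcal$ is moreover suspended, Theorem \ref{Neeman}(1) upgrades this to the statement that $\Xcal$ is coreflective, i.e.\ the inclusion $\Xcal\hookrightarrow\Tcal$ admits a right adjoint. Theorem \ref{KV} then yields the desired t-structure $(\Xcal,\Xcal^\perp)$. Part (2) is handled by the formally dual chain: Theorem \ref{approx for def}(2) gives that $\Xcal$ is preenveloping, Theorem \ref{Neeman}(2) promotes this to reflectivity of $\Xcal$, and the dual of Theorem \ref{KV} (applicable because $\Xcal$ is cosuspended) produces the t-structure $({}^\perp\Xcal,\Xcal)$.

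For the final statement, suppose $\Xcal$ is triangulated and definable. Theorem \ref{closure for definable} translates definability into closure under products, pure subobjects and pure quotients; closure under coproducts is then automatic, since any coproduct is a pure subobject of the corresponding product. Because $\Xcal$ is triangulated it is both suspended and cosuspended, so the hypotheses of parts (1) and (2) are simultaneously satisfied. Applying each part produces the t-structures $(\Xcal,\Xcal^\perp)$ and $({}^\perp\Xcal,\Xcal)$, so $\Xcal$ is indeed a triangulated TTF class.

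The only step that is not immediate from citing an earlier result is the small verification, in the last paragraph, that closure under products and pure subobjects forces closure under coproducts (used to bridge the two halves of the "in particular" statement); this is the potential source of friction in the argument, but is already implicit in the proof of Theorem \ref{closure for definable}. Apart from that there is no real obstacle: the substantive content has been absorbed into the earlier theorems, and the proposition is their direct consequence.
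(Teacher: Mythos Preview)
Your proof is correct and follows essentially the same route as the paper: the proposition is presented there as a direct restatement of Corollary~\ref{approx t-str} via Theorems~\ref{Neeman} and~\ref{KV}, which is exactly the chain of implications you spell out. The verification that closure under products and pure subobjects forces closure under coproducts is likewise handled in the paper just as you indicate, by noting that a coproduct is a pure subobject of the corresponding product.
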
 

For a subcategory $\Xcal$ of a triangulated category $\Tcal$ and $I$ a subset of $\Z$, we write:
 \[ \Xcal^{\perp_I} := \bigcap_{i\in I} (\Xcal[-i])^\perp \hspace{20pt} {}^{\perp_I}\Xcal := \bigcap_{i\in I} {}^\perp(\Xcal[i]). \] 
For each $i$ in $\Z$, we will denote the set $\{j\in \Z \mid j > i\}$ by $>i$; similarly for $<i$, $\leq i$, $\geq i$, $i$. The following corollary uses the proposition above to build t-structures out of pure-injective objects in a minimal way.

\begin{corollary}\label{t-structure from pure-injective}
Let $\Tcal$ be a compactly generated algebraic triangulated category and $E$ a pure-injective object in $\Tcal$. Then there a t-structure $({}^{\perp_{\leq 0}}E, \Vcal_E)$ and $\Vcal_E$ is the smallest coaisle containing $E$.
\end{corollary}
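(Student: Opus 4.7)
The plan is to verify that $\Ucal := {}^{\perp_{\leq 0}}E$ satisfies the hypotheses of Proposition \ref{adjoints}(1); this will produce the t-structure $(\Ucal, \Vcal_E)$ with $\Vcal_E := \Ucal^\perp$. After that, confirming that $\Vcal_E$ is the smallest coaisle containing $E$ will be a formal orthogonality argument.

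To apply Proposition \ref{adjoints}(1), I need $\Ucal$ to be suspended and closed under coproducts and pure quotients. For suspension, use $\Hom_\Tcal(X[1], E[i]) = \Hom_\Tcal(X, E[i-1])$: when $i\leq 0$, also $i-1\leq 0$, so $X\in\Ucal$ forces $X[1]\in\Ucal$. Closure under coproducts is immediate because $\Hom_\Tcal(-, E[i])$ sends coproducts to products. For closure under pure quotients, I write $\Ucal = \bigcap_{i\leq 0} {}^{\perp}(E[i])$ and observe that each shift $E[i]$ is again pure-injective; the same argument implicit in Corollary \ref{ex torsion pairs}(2) shows that $^\perp(E[i])$ is closed under pure quotients, and intersections preserve this closure property. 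Proposition \ref{adjoints}(1) then yields the t-structure $(\Ucal,\Vcal_E)$.

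For minimality, first note $E \in \Vcal_E$: setting $i = 0$ in the defining condition of $\Ucal$ gives $\Hom_\Tcal(X, E) = 0$ for every $X\in\Ucal$. Now let $(\Ucal', \Vcal)$ be any t-structure with $E \in \Vcal$. Since $\Ucal'$ is suspended, any $X\in\Ucal'$ satisfies $X[j]\in\Ucal'$ for all $j\geq 0$; combined with $\Hom_\Tcal(\Ucal',\Vcal) = 0$ and $E\in\Vcal$, this forces $\Hom_\Tcal(X, E[i]) = \Hom_\Tcal(X[-i], E) = 0$ for every $i\leq 0$. Hence $\Ucal'\subseteq\Ucal$, and consequently $\Vcal_E = \Ucal^\perp \subseteq (\Ucal')^\perp = \Vcal$.

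The main obstacle is unpacking why $^\perp(E[i])$ is closed under pure quotients, i.e.~the key property of pure-injectives that $\Hom_\Tcal(-, E[i])$ converts pure triangles into short exact sequences. The standard way to see this is to take a pure triangle $L\to M\to N\to L[1]$ and push it out along any morphism $f\colon L\to E[i]$ to obtain a pure triangle starting at $E[i]$ (purity of the pushout is read off at the level of $\mathbf{y}$-images in $\mathsf{Mod}(\Tcal^c)$, using Lemma \ref{pure mono}), which then splits by pure-injectivity of $E[i]$; the splitting provides the required extension of $f$ along $M\to L$.
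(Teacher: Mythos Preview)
Your proof is correct and follows the same approach as the paper, which simply invokes Proposition~\ref{adjoints}(1) without spelling out the verification of its hypotheses; you supply those details (suspension, closure under coproducts and pure quotients) and then give the same orthogonality argument for minimality. One small slip: in your final paragraph the extension is of $f$ along the pure monomorphism $L\to M$ (not ``$M\to L$''), and note that this pushout argument literally yields surjectivity of $\Hom_\Tcal(M,E[i])\to\Hom_\Tcal(L,E[i])$, whereas closure under pure quotients needs injectivity of $\Hom_\Tcal(N,E[i])\to\Hom_\Tcal(M,E[i])$---but this follows immediately by applying your surjectivity statement to the shifted pure triangle $L[1]\to M[1]\to N[1]\to L[2]$ and using the long exact sequence.
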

\begin{proof}
By Proposition \ref{adjoints}(1), there is a t-structure of the form  $({}^{\perp_{\leq 0}}E, \Vcal_E)$ and, clearly, $E$ lies in $\Vcal_E$. Suppose now that $\Vcal$ is a coaisle containing $E$. Then $E[k]$ lies in $\Vcal$ for all $k\leq 0$ and, therefore, ${}^\perp\Vcal$ is contained in ${}^{\perp_{\leq 0}}E$. Hence, we have $\Vcal_E\subseteq\Vcal$.
\end{proof}
\section{Definability and recollements}\label{Sec: Recoll}

In this section we consider the case of triangulated TTF classes, i.e.~triangulated subcategories which are both torsion and torsionfree classes. We know from Proposition \ref{adjoints} that triangulated TTF classes are bireflective subcategories. These turn out to be related to certain diagrams of functors called recollements. Let us recall and relate all these concepts.

\begin{definition}\label{Def: recollement}
A diagram of triangulated categories and triangle functors of the form
\begin{equation}\label{rec}
\xymatrix@C=0.5cm{\Bcal \ar[rrr]^{i_*} &&& \Tcal \ar[rrr]^{j^*}  \ar @/_1.5pc/[lll]_{i^*}  \ar @/^1.5pc/[lll]_{i^!} &&& \Ycal\ar @/_1.5pc/[lll]_{j_!} \ar @/^1.5pc/[lll]_{j_*} }
\end{equation}
is said to be a \textbf{recollement} of $\Tcal$ if
 \begin{enumerate}
\item $(i^*, i_*, i^!)$ and $(j_!, j^*, j_*)$ are adjoint triples;
\item $i_*$, $j_!$ and $j_*$ are fully faithful; and
\item $\im i_* = \Ker j^*$.
\end{enumerate}
Two recollements given by triples $(i^*, i_*, i^!)$, $(j_!, j^*, j_*)$ and $(i^{\prime*}, i_*^\prime, i^{\prime!})$, $(j_!^\prime, j^{\prime*}, j_*^\prime)$ respectively  are \textbf{equivalent} if $\im i_* = \im i_*^\prime$, $\im j_! = \im j_!^\prime$ and $\im j_* = \im j_*^\prime$.
\end{definition}

Recall that given a recollement as above, for each object $X$ in the triangulated $\Tcal$, the units and counits of the adjunctions yield triangles as follows
\begin{equation}\nonumber
j_!j^*X\longrightarrow X\longrightarrow i_*i^*X\longrightarrow j_!j^*X[1]\ \ \ \ \ \ \ \ \ i_*i^!X\longrightarrow X\longrightarrow j_*j^*X\longrightarrow i_*i^!X[1].
\end{equation}

Triangulated TTF triples also turn out to be related to the notion of (co)smashing subcategory.

\begin{definition}
A triangulated subcategory $\Scal$ of a triangulated category $\Tcal$ with products and coproducts is said to be \textbf{smashing} if $\Scal$ is coreflective and $\Scal^\perp$ is closed under coproducts. Dually, we say that a triangulated subcategory $\Ccal$ is \textbf{cosmashing} if it is reflective and ${}^\perp\Ccal$ is closed under products.
\end{definition}

We now state the following equivalent descriptions of triangulated TTF classes. Most of the conditions are well-known, however we provide an argument for the sake of completion.

\begin{proposition}\label{Prop: eq bireflective}
Let $\Tcal$ be a compactly generated algebraic triangulated category.  The following are equivalent for a triangulated subcategory $\Bcal$ of $\Tcal$.
\begin{enumerate}
\item $\Bcal$ is a TTF class in $\Tcal$;
\item $\Bcal=\Scal^\perp$ for a smashing subcategory $\Scal$;
\item $\Bcal={}^\perp\Ccal$ for a cosmashing subcategory $\Ccal$;
\item $\Bcal$ is bireflective;
\item There is a recollement of $\Tcal$ of the form (\ref{rec});
\item $\Bcal$ is definable;
\item $\Bcal$ is closed under products and pure subobjects.
\end{enumerate}
\end{proposition}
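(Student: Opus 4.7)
The plan is to establish the seven equivalences by a cycle combining classical torsion-pair machinery for conditions $(1)$--$(5)$ with the purity-definability results from Sections \ref{Sec: Approx}--\ref{Sec: tors} for $(6)$ and $(7)$. First I would verify the block $(1)\Leftrightarrow(2)\Leftrightarrow(3)\Leftrightarrow(4)\Leftrightarrow(5)$, which is essentially classical. The equivalence $(1)\Leftrightarrow(4)$ follows from Theorem \ref{Neeman}: a torsion class is coreflective and a torsionfree class is reflective, so a TTF class is bireflective; conversely, the two adjoints associated to a bireflective triangulated subcategory $\Bcal$ produce torsion pairs $({}^\perp\Bcal,\Bcal)$ and $(\Bcal,\Bcal^\perp)$ via the usual adjunction triangles. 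Setting $\Scal:={}^\perp\Bcal$ and $\Ccal:=\Bcal^\perp$, the smashing and cosmashing conditions reduce to $\Bcal$ being closed under coproducts and products respectively, which holds automatically for bireflective $\Bcal$, giving $(1)\Leftrightarrow(2)\Leftrightarrow(3)$. Finally, $(1)\Leftrightarrow(5)$ is the standard bijection between TTF triples and recollements, with $i_*$ the inclusion of $\Bcal$, $\Ycal$ the Verdier quotient $\Tcal/\Bcal$, and the remaining functors assembled from the torsion-pair truncations.

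For $(6)\Leftrightarrow(7)$, I would use Theorem \ref{closure for definable}, which characterises definability as closure under products, pure subobjects, and pure quotients. The assumption that $\Bcal$ is triangulated makes the pure-quotient condition automatic from the pure-subobject one: given a pure triangle $L\to M\to N\to L[1]$ with $M\in\Bcal$, pure-subobject closure places $L$ in $\Bcal$, and then $N$ is forced into $\Bcal$ as the cone of a morphism between two objects of the triangulated subcategory $\Bcal$. Thus $(7)\Rightarrow(6)$, and $(6)\Rightarrow(7)$ is immediate from the same theorem. The implication $(6)\Rightarrow(1)$ is then exactly Proposition \ref{adjoints}, since a triangulated definable subcategory is simultaneously a torsion class and a torsionfree class.

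It remains to close the cycle via $(5)\Rightarrow(7)$. Closure of $\Bcal=\Ker j^*$ under products is immediate: $j^*$ is right adjoint to $j_!$, hence preserves products. For pure-subobject closure, take a pure triangle $L\to M\to N\to L[1]$ in $\Tcal$ with $M\in\Bcal$. By Lemma \ref{pure mono}, this triangle is a directed colimit in $\mathsf{Mod}(\Tcal^c)$ of split triangles in $\Tcal$. The plan is to apply $j^*$ and conclude that $j^*L\to j^*M=0\to j^*N\to j^*L[1]$ is a pure triangle in $\Ycal$, from which $j^*L$ is a pure subobject of zero, hence $\mathbf{y}(j^*L)=0$, which by compact generation of $\Ycal$ forces $j^*L=0$ and therefore $L\in\Ker j^*=\Bcal$.

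The hard part will be justifying that $j^*$ preserves pure triangles and that $\Ycal$ is compactly generated. Both issues use the algebraic hypothesis essentially. The recollement can be lifted to the underlying dg-module level, where $j^*$ is induced by an exact and directed-colimit-preserving functor; by a derivation argument in the spirit of Proposition \ref{prop: derived functors}, this transfers to preservation of directed homotopy colimits in $\Dcal(\Acal)\simeq\Tcal$, and combined with $j^*$ being a triangle functor (hence preserving split triangles) one obtains preservation of pure triangles via Lemma \ref{pure mono}. The same lift equips $\Ycal$ with a compactly generated algebraic structure. This model-theoretic bookkeeping is the only non-formal step in the argument, and is where one really uses that $\Tcal$ is algebraic rather than merely compactly generated.
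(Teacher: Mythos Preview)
Your overall scheme is correct, and your argument for $(7)\Rightarrow(6)$ is in fact cleaner than the paper's: deducing pure-quotient closure directly from pure-subobject closure by noting that the third term of a pure triangle is a cone of a map between two objects of the triangulated subcategory $\Bcal$ avoids the detour through coproducts and directed homotopy colimits that the paper takes via Remark~\ref{rem pure sub}.

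The divergence is in how you link the block $(1)$--$(5)$ to $(6)$--$(7)$. You attempt $(5)\Rightarrow(7)$ by arguing that $j^*$ preserves pure triangles, to be justified by lifting the recollement to the dg level. This step is genuinely problematic as written: lifting an arbitrary recollement of $\Dcal(\Acal)$ to dg functors is a substantial theorem in its own right and is nowhere developed in the paper, and you also need $\Ycal$ to be compactly generated even to speak of pure triangles there---which is not automatic for smashing localisations of compactly generated categories (they are only well-generated in general). The paper sidesteps all of this by closing the loop via $(1)\Rightarrow(6)$ instead, invoking Krause's result \cite[Theorem~12.1]{Krause2.5}: for a TTF triple $(\Scal,\Bcal,\Ccal)$ in any compactly generated $\Tcal$, one has $\Bcal=\mathcal{I}^\perp$ for the ideal $\mathcal{I}$ of morphisms in $\Tcal^c$ factoring through $\Scal$, and completing each such morphism to a triangle exhibits $\Bcal$ as the vanishing locus of a set of coherent functors. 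This single external citation replaces your entire lifting programme and does not even use the algebraic hypothesis.
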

\begin{proof}
(1) $\Leftrightarrow$ (2) $\Leftrightarrow$ (3): We refer to \cite[Corollary 3.15]{NS} and references therein.

(1) $\Leftrightarrow$ (4): This follows, for example, from Theorem \ref{KV}.

(4) $\Rightarrow$ (5): Define $i_*\colon\Bcal\longrightarrow \Tcal$ to be the inclusion functor, $\Ycal:=\Tcal/\Bcal$ to be the Verdier quotient and let $j^*\colon \Tcal\longrightarrow \Ycal$ be the corresponding Verdier quotient functor. Since $\Bcal$ is bireflective, it then follows from \cite[Theorem 2.1]{CPS} that $j^*$ has both left and right adjoints and that these are fully faithful.

(5) $\Rightarrow$ (4): This is clear by definition of bireflective subcategory.

(1) $\Rightarrow$ (6): Let $(\Scal, \Bcal, \Ccal)$ be a triangulated TTF triple. Consider the ideal $\mathcal{I}$ of morphisms in $\Tcal^c$ consisting of morphisms factoring through the smashing subcategory $\Scal$.  By \cite[Theorem 12.1]{Krause2.5}, the class $\mathcal{I}^\perp := \{ B \in \Tcal \mid \Hom_\Tcal(g, B) = 0 \text{ for all } g \in \mathcal{I}\}$ coincides with the class $\Bcal$.  For each $g$ in $\mathcal{I}$, we may complete to a triangle 
$$L \overset{g}{\longrightarrow} M \overset{f}{\longrightarrow} N \longrightarrow L[1]$$ 
and note that $\Hom_\Tcal(g, B) = 0$ if and only if $\Hom_\Tcal(f, B)$ is surjective.  That is, if and only if the coherent functor $F_f := \Coker(\Hom_\Tcal(f, -) )$ vanishes on $B$.  Thus, the class $\Bcal$ is definable.

(6) $\Rightarrow$ (1): This follows from Proposition \ref{adjoints}.

(6) $\Rightarrow$ (7): This follows from Theorem \ref{closure for definable}.

(7) $\Rightarrow$ (6): Since $\Bcal$ closed under pure subojects and products, it is also closed under coproducts. Since it is furthermore triangulated, it is then closed under directed homotopy colimits (see Remark \ref{rem pure sub}), and the result then follows from Theorem \ref{closure for definable}.
\end{proof}

\begin{remark}
The bijective correspondence between triangulated TTF classes, smashing subcategories and equivalence classes of recollements is well-understood for a large class of triangulated categories (see, for example, \cite{Nicolas}). Moreover, in a compactly generated triangulated category, the equivalence between (2) and (6) can be found in \cite{Krause2.5}. It boils down to the fact that smashing subcategories of compactly generated triangulated categories are determined by certain ideals of morphisms between compact objects. We use those arguments to prove (1) $\Rightarrow$ (6), and provide new arguments for the converse implication.
\end{remark}

\begin{remark}
The Telescope Conjecture for compactly generated triangulated categories asserts that every smashing subcategory is compactly generated, i.e.~that given a smashing subcategory $\Scal$ of a compactly generated triangulated category $\Tcal$, there is a set of compact objects $\Kcal$ such that $\Scal^\perp=\Kcal^\perp$. If $\Tcal$ is furthermore algebraic, Proposition \ref{Prop: eq bireflective} allows us to restate the Telescope Conjecture in an equivalent way: for every triangulated subcategory $\Bcal$ closed under products and pure subobjects, there is a set of compact objects $\Kcal$ such that $\Bcal=\Kcal^\perp$. The Telescope Conjecture is, however, known to be false for general compactly generated algebraic triangulated categories (see \cite{Keller2} and \cite{BaSt} for counterexamples). Still, it remains a difficult problem to identify which triangulated categories satisfy the Telescope Conjecture. 
\end{remark}

Next we observe that given an extension-closed definable subcategory of a compactly generated algebraic triangulated category (which by Theorem \ref{closure tors class} is a torsion class), we can build a subcategory satisfying the equivalent conditions of the proposition above.

\begin{corollary}\label{definable to bireflective}
Let $\Xcal$ be a subcategory of a compactly generated algebraic triangulated category $\Tcal$. If $\Xcal$ is closed under extensions, products and pure subobjects, then $\cap_{i\in\mathbb{Z}}\Xcal[i]$ is a triangulated TTF class.
\end{corollary}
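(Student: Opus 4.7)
My plan is to reduce to the implication $(7) \Rightarrow (1)$ of Proposition \ref{Prop: eq bireflective}, which states that any triangulated subcategory of $\Tcal$ closed under products and pure subobjects is automatically a (triangulated) TTF class. Setting $\Bcal := \bigcap_{i\in\Zbb}\Xcal[i]$, it therefore suffices to verify that $\Bcal$ is a triangulated subcategory and is closed under products and under pure subobjects.

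First I would check that $\Bcal$ is a triangulated subcategory. Stability under $[\pm 1]$ is immediate, since reindexing the intersection by a shift returns $\Bcal$ itself. For closure under summands, note that split monomorphisms fit into split triangles and hence into pure triangles, so pure-subobject closure of $\Xcal$ already implies summand closure of $\Xcal$; this then passes to each $\Xcal[i]$ and to the intersection. For closure under extensions, the shift of a triangle is again a triangle, so the extension-closure of $\Xcal$ transfers to each $\Xcal[i]$, and intersections of extension-closed subcategories are extension-closed.

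Next I would verify the remaining two closure properties. Since each shift $[i]$ is an autoequivalence of $\Tcal$, it commutes with products, so each $\Xcal[i]$ inherits closure under products from $\Xcal$, and hence so does $\Bcal$. For pure subobjects the key point is that shifts preserve pure triangles: by Lemma \ref{pure mono} a triangle is pure precisely when it is a directed colimit in $\mathsf{Mod}(\Tcal^c)$ of split triangles, and the endofunctor of $\mathsf{Mod}(\Tcal^c)$ induced by precomposing with the autoequivalence $[-i]$ on $\Tcal^c$ is an exact autoequivalence, hence sends such presentations to presentations of the same form. Thus each $\Xcal[i]$ is closed under pure subobjects, and so is $\Bcal$.

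The only mildly nontrivial point is the preservation of purity under shifts, but this reduces to the exactness of an autoequivalence of $\mathsf{Mod}(\Tcal^c)$ as above. With both lists of closure properties verified, $\Bcal$ fulfils condition (7) of Proposition \ref{Prop: eq bireflective} and is therefore a triangulated TTF class, which is the desired conclusion.
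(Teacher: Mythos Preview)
Your proposal is correct and follows essentially the same route as the paper: verify that $\Bcal=\bigcap_{i\in\Zbb}\Xcal[i]$ is a triangulated subcategory closed under products and pure subobjects, and then invoke Proposition~\ref{Prop: eq bireflective}. The paper's proof is terser---it declares the closure properties ``clear'' and only spells out shift-stability---while you supply the details (summand closure via split/pure monomorphisms, preservation of purity under shifts), but the strategy is identical.
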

\begin{proof}
It is clear that the intersection is closed under extensions, products and pure subobjects. Now observe that if $X$ lies in $\Xcal[i]$ for all $i$ in $\mathbb{Z}$, then $X[k]$ lies in $\Xcal[k+i]$ for all $i$ in $\mathbb{Z}$, thus showing that $\cap_{i\in\mathbb{Z}}\Xcal[i]$ is triangulated. The assertion then follows directly from Proposition \ref{Prop: eq bireflective}.
\end{proof}

Recollements of triangulated categories are used to glue torsion pairs. We recall the following well-known theorem (whose proof is the same as the original statement made for t-structures in \cite{BBD}). Given two subcategories $\Wcal$ and $\Zcal$ of $\Tcal$, we denote by $\Wcal\ast \Zcal$ the subcategory of $\Tcal$ whose objects are those $X$ in $\Tcal$ for which there are objects $W$ in $\Wcal$, $Z$ in $\Zcal$ and a triangle
$$W\longrightarrow X\longrightarrow Z\longrightarrow W[1].$$

\begin{theorem}[{\cite[Th\'eor\`eme 1.4.10, Proposition 1.4.12]{BBD}}]\label{glueing}
Let $\Tcal$ be a triangulated category and consider a recollement of $\Tcal$ of the form (\ref{rec}), and denote by $(\Scal,\Bcal,\Ccal)$ its associated triangulated TTF triple.
\begin{enumerate}
\item If $(\Wcal,\Zcal)$ and $(\Ucal,\Vcal)$ are torsion pairs in $\Bcal$ and $\Ycal$ respectively, then there is a torsion pair in $\Tcal$ of the form $(j_!\Ucal\ast i_*\Wcal, i_*\Zcal\ast j_*\Vcal)$. 
\item A torsion pair $(\Mcal,\Ncal)$ in $\Tcal$ is obtained by the construction in (1) if and only if $j_!j^*\Mcal\subseteq \Mcal$ and if and only if $j_*j^*\Ncal\subseteq \Ncal$, in which case the torsion pairs that give rise to it are $(j^*\Mcal,j^*\Ncal)$ in $\Ycal$ and $(i^*\Mcal,i^!\Ncal)$ in $\Bcal$. Moreover, in this case, we have that $j_!j^*\Mcal=\Mcal\cap\Scal$ and $j_*j^*\Ncal=\Ncal\cap\Ccal$.
\end{enumerate}
\end{theorem}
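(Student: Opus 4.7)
The plan is to execute the BBD-type gluing construction, which applies verbatim to general torsion pairs since Definition \ref{Def: torsion pair} imposes no shift compatibility. For (1), set $\Mcal := j_!\Ucal \ast i_*\Wcal$ and $\Ncal := i_*\Zcal \ast j_*\Vcal$ and verify the three torsion pair axioms. Closure under summands transfers from $\Ucal,\Vcal,\Wcal,\Zcal$ via standard octahedral manipulations applied to the defining triangles.

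For the Hom-vanishing $\Hom_\Tcal(\Mcal,\Ncal) = 0$, I would break any morphism $M \to N$ along the defining triangles of $M$ and $N$ to reduce to the four pairings $\Hom_\Tcal(j_!U,i_*Z)$, $\Hom_\Tcal(j_!U,j_*V)$, $\Hom_\Tcal(i_*W,i_*Z)$ and $\Hom_\Tcal(i_*W,j_*V)$. Using the recollement identities $j^*i_* = 0$, $i^*j_! = 0$, $i^!j_* = 0$ together with the standard adjunctions and the fully faithfulness of $i_*, j_!, j_*$, each pairing collapses either to zero directly or to the Hom-vanishing already assumed within $\Ycal$ or $\Bcal$. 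For the approximation triangle of a given $X$, I would first take a $(\Ucal,\Vcal)$-approximation $U \to j^*X \to V$ in $\Ycal$; the adjoint morphism $j_!U \to X$ completes to a triangle $j_!U \to X \to X' \to j_!U[1]$ with $j^*X' \cong V$. Next, take a $(\Wcal,\Zcal)$-approximation $W \to i^*X' \to Z$ in $\Bcal$; the adjoint morphism $X' \to i_*Z$ completes to a triangle $M' \to X' \to i_*Z \to M'[1]$. Two applications of the octahedral axiom to these pairs of composable triangles should yield a torsion triangle $M'' \to X \to N \to M''[1]$ with $M''$ fitting in $j_!U \to M'' \to i_*W$ (so $M'' \in \Mcal$) and $N$ fitting in $i_*Z \to N \to j_*V$ (so $N \in \Ncal$), the latter exploiting $j^*X' \cong V$ together with the $j^* \dashv j_*$ adjunction to introduce the $j_*V$ factor. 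The main obstacle will be the diagram chase verifying that the octahedral outputs truly assemble into the claimed $\ast$-decompositions.

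For (2), necessity of $j_!j^*\Mcal \subseteq \Mcal$ is immediate from $j_!j^*\Mcal \subseteq j_!\Ucal \subseteq \Mcal$ when $(\Mcal,\Ncal)$ arises via the gluing. Conversely, given such a torsion pair, set $\Ucal := j^*\Mcal$, $\Vcal := j^*\Ncal$, $\Wcal := i^*\Mcal$, $\Zcal := i^!\Ncal$; the torsion pair axioms in $\Ycal$ and $\Bcal$ follow from those in $\Tcal$ using the fully faithfulness of $j_!, j_*, i_*$ and the recollement adjunctions. The gluing triangle $j_!j^*M \to M \to i_*i^*M$, combined with the hypothesis $j_!j^*\Mcal \subseteq \Mcal$ and the extension-closure of $\Mcal$, forces $i_*i^*M \in \Mcal$ and hence $i^*M \in \Wcal$, so $M \in j_!\Ucal \ast i_*\Wcal$. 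The identities $\Mcal \cap \Scal = j_!j^*\Mcal$ and $\Ncal \cap \Ccal = j_*j^*\Ncal$ follow because $\Scal$ is the kernel of $i^*$ and $\Ccal$ is the kernel of $i^!$, so any $M \in \Mcal \cap \Scal$ has $i^*M = 0$ and the gluing triangle gives $M \cong j_!j^*M$. Finally, the equivalence $j_!j^*\Mcal \subseteq \Mcal \Leftrightarrow j_*j^*\Ncal \subseteq \Ncal$ follows from the chain of adjunctions: both conditions are equivalent to $\Hom_\Ycal(j^*\Mcal, j^*\Ncal) = 0$, since $\Hom_\Tcal(j_!j^*M, N) \cong \Hom_\Ycal(j^*M, j^*N) \cong \Hom_\Tcal(M, j_*j^*N)$ and $(\Mcal,\Ncal)$ is already a torsion pair in $\Tcal$.
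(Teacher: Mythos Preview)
The paper does not supply its own proof of this statement; it is quoted from \cite{BBD} with the remark that the t-structure argument there carries over to torsion pairs, so there is nothing to compare your sketch against beyond the BBD original. Your outline is recognisably that strategy and is largely sound, but two steps need correction.

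In part (1), approximating $i^*X'$ rather than $i^!X'$ creates an obstacle. With your choice, the fibre $M'$ of $X' \to i_*Z$ satisfies $j^*M' \cong V$ and $i^*M' \cong W$, so its recollement triangle reads $j_!V \to M' \to i_*W$; the octahedral on $X \to X' \to i_*Z$ then produces a triangle $j_!U \to M'' \to M'$, not $j_!U \to M'' \to i_*W$ as you claim, and it is not clear how a further octahedral repairs this. The BBD move is to take $W \to i^!X' \to Z$ instead: the adjoint $i_*W \to X'$ has cofibre $N$ with $i^!N \cong Z$ and $j^*N \cong V$, so the recollement triangle $i_*i^!N \to N \to j_*j^*N$ immediately exhibits $N \in i_*\Zcal \ast j_*\Vcal$, and a single octahedral on $X \to X' \to N$ yields $M \to X \to N$ with $j_!U \to M \to i_*W$.

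In part (2), the sentence ``extension-closure of $\Mcal$ forces $i_*i^*M \in \Mcal$'' is not correct as written: extension-closure constrains the middle term of a triangle, not the cone. You do not actually need that conclusion. Since $\Wcal := i^*\Mcal$ by definition, the recollement triangle $j_!j^*M \to M \to i_*i^*M$ already shows $M \in j_!\Ucal \ast i_*\Wcal$ tautologically, and dually $\Ncal \subseteq i_*\Zcal \ast j_*\Vcal$; comparing the two torsion pairs then gives equality. The rest of your argument for (2), including the equivalence of the two inclusion conditions via the adjunction chain and the identification of $\Mcal\cap\Scal$ and $\Ncal\cap\Ccal$, is correct.
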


The torsion pair in $\Tcal$ obtained in the theorem above is said to be \textbf{glued} from the torsion pair $(\Wcal,\Zcal)$ in $\Bcal$ and the torsion pair $(\Ucal,\Vcal)$ in $\Ycal$ along the given recollement.

The following well-known lemma will be useful in what follows.

\begin{lemma}\label{lem: restricts}
Let $\Ccal$ be a triangulated subcategory of a triangulated category $\Tcal$.  Let $(\Ucal, \Vcal)$ be a torsion pair in $\Tcal$ and suppose $\Ucal \subseteq \Ccal$, then $(\Ucal, \Vcal \cap \Ccal)$ is a torsion pair in $\Ccal$.  Similarly, if $\Vcal \subseteq \Ccal$, then $(\Ucal  \cap \Ccal, \Vcal)$ is a torsion pair in $\Ccal$.
\end{lemma}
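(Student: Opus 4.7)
The plan is to verify the three axioms of a torsion pair from Definition \ref{Def: torsion pair} for the pair $(\Ucal, \Vcal \cap \Ccal)$ in $\Ccal$; the second statement will follow by an entirely dual argument.

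First I would check closure under summands. Since $\Ucal$ is closed under summands in $\Tcal$ and $\Ucal \subseteq \Ccal$, it is automatically closed under summands when viewed as a subcategory of $\Ccal$. For $\Vcal \cap \Ccal$, suppose $X = X_1 \oplus X_2$ lies in $\Vcal \cap \Ccal$; then $X_1, X_2$ lie in $\Vcal$ (closure of $\Vcal$ in $\Tcal$) and they lie in $\Ccal$ (since $\Ccal$, being triangulated, is in particular closed under summands). Next, the Hom-vanishing $\Hom_\Ccal(\Ucal, \Vcal \cap \Ccal) = 0$ is immediate from $\Hom_\Tcal(\Ucal, \Vcal) = 0$, because $\Ccal$ is a full subcategory of $\Tcal$.

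The only condition requiring a genuine argument is the existence of approximation triangles inside $\Ccal$. Given any object $X$ in $\Ccal$, apply the torsion pair $(\Ucal, \Vcal)$ in $\Tcal$ to obtain a triangle
\[ U \longrightarrow X \longrightarrow V \longrightarrow U[1] \]
with $U \in \Ucal$ and $V \in \Vcal$. By hypothesis $U \in \Ucal \subseteq \Ccal$, and $X \in \Ccal$ by assumption. Since $\Ccal$ is a triangulated subcategory of $\Tcal$, it is closed under cones (and shifts), so $V \in \Ccal$ as well. Hence $V \in \Vcal \cap \Ccal$, and the triangle lives entirely in $\Ccal$, providing the required approximation sequence.

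The second statement, under the hypothesis $\Vcal \subseteq \Ccal$, follows by the exact dual argument: the subcategory $\Vcal$ is automatically contained in $\Ccal$ with its closure properties inherited, the Hom-vanishing is trivial, and for each $X \in \Ccal$ the fibre $U$ in the approximation triangle $U \to X \to V \to U[1]$ lies in $\Ccal$ because $V \in \Vcal \subseteq \Ccal$ and $X \in \Ccal$ (using closure of $\Ccal$ under shifts and cones of the rotated triangle). There is no real obstacle here — the lemma is a direct instance of the principle that a torsion pair restricts to any triangulated subcategory containing one of the two classes, and the key point is simply that triangulated subcategories are closed under extensions/cones.
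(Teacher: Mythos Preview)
Your proof is correct and follows essentially the same approach as the paper: verify that the first two axioms restrict trivially, then for the approximation triangle use that $U$ and $X$ lie in $\Ccal$ and that $\Ccal$ is closed under cones (equivalently, extensions and shifts) to conclude $V \in \Vcal \cap \Ccal$. The paper is slightly more terse but the argument is the same.
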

\begin{proof}
Clearly the first two conditions of Definition \ref{Def: torsion pair} will restrict to $\Ccal$.  Moreover, if  $\Ucal$ is contained in $\Ccal$ then, for any $X$ in $\Ccal$, the first two terms of the triangle given by Definition \ref{Def: torsion pair} are contained in $\Ccal$.  Since $\Ccal$ is closed under extensions and shifts, it follows that the third term will be contained in $\Vcal \cap \Ccal$ and so $(\Ucal, \Vcal \cap \Ccal)$ is a torsion pair in $\Ccal$.  A similar argument yields the second claim.
\end{proof}

We finish with a result that states that any torsion pair with a definable torsion or torsionfree class can be obtained by glueing along a recollement of a trivial torsion pair with a torsion pair with certain nondegeneracy conditions. Moreover, we show that there is a natural operation generating a new torsion pair from an old one (see also \cite[Section 10]{PsSurvey} for an analogous construction). 

\begin{proposition}\label{Prop: constructing recoll}
Let $\Tcal$ be a compactly generated algebraic triangulated category and let $(\Ucal, \Vcal)$ be a torsion pair in $\Tcal$ with $\Ucal$ a definable subcategory. Consider the bireflective subcategory $\Bcal=\bigcap_{i\in \Z}\Ucal[i]={}^{\perp_\Z}\Vcal$ given by Corollary \ref{definable to bireflective} and the associated recollement 
\[\xymatrix@C=0.5cm{\Bcal \ar[rrr]^{i_*} &&& \Tcal \ar[rrr]^{j^*}  \ar @/_1.5pc/[lll]_{i^*}  \ar @/^1.5pc/[lll]_{i^!} &&& \Ycal\ar @/_1.5pc/[lll]_{j_!} \ar @/^1.5pc/[lll]_{j_*} }\]
where $i_*$ is the corresponding inclusion functor into $\Tcal$. Denote by $\Scal$ and $\Ccal$ the associated smashing and cosmashing subcategories, respectively.  Then the following statements hold. 
\begin{enumerate}
\item The torsion pair $(\Ucal,\Vcal)$ in $\Tcal$ is obtained by glueing:
\begin{itemize}
\item the left nondegenerate torsion pair $(j^*\Ucal,j^*\Vcal)$ which, under the equivalence between $\Ycal$ and $\Ccal$ induced by $j_*$, corresponds to the torsion pair $(\Ucal \cap \Ccal, \Vcal)$ in $\Ccal$;
\item the trivial torsion pair $(\Bcal,0)$ in $\Bcal$. 
\end{itemize}
\item The pair $(\Ucal \cap \Scal,\Bcal\ast \Vcal)$ is a left nondegenerate torsion pair in $\Tcal$ obtained by glueing\textcolor{blue}{:}
\begin{itemize}
\item the left nondegenerate torsion pair $(j^*\Ucal,j^*\Vcal)$;
\item the trivial torsion pair $(0,\Bcal)$ in $\Bcal$. 
\end{itemize}
\end{enumerate}
\end{proposition}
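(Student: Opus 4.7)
My plan is to apply the glueing machinery of Theorem \ref{glueing} to the recollement associated to $\Bcal$. The fundamental observation is that $\Bcal=\bigcap_{i\in\Z}\Ucal[i]$ is a triangulated subcategory contained in $\Ucal$, so in particular $\Bcal\subseteq\Ucal[i]$ for every $i$ and $\Hom_\Tcal(\Bcal,\Vcal)=0$. Combined with the extension-closedness of $\Ucal$ (as a torsion class), this will verify all the compatibility conditions needed to glue.

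For part (1), I would first check that $j_!j^*\Ucal\subseteq\Ucal$: given $X\in\Ucal$, the (rotated) counit triangle $i_*i^*X[-1]\to j_!j^*X\to X\to i_*i^*X$ realises $j_!j^*X$ as an extension of $X\in\Ucal$ by $i_*i^*X[-1]\in\Bcal\subseteq\Ucal$. Theorem \ref{glueing}(2) then yields that $(\Ucal,\Vcal)$ is glued from $(j^*\Ucal,j^*\Vcal)$ in $\Ycal$ and $(i^*\Ucal,i^!\Vcal)$ in $\Bcal$. I would identify the latter pair as $(\Bcal,0)$ using $i^*i_*=\mathrm{id}$ (for $i^*\Ucal=\Bcal$, since $\Bcal\subseteq\Ucal$) and the adjunction $(i_*,i^!)$ applied to $\Hom_\Tcal(\Bcal,\Vcal)=0$ (for $i^!\Vcal=0$). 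The identification of $(j^*\Ucal,j^*\Vcal)$ in $\Ccal$ as $(\Ucal\cap\Ccal,\Vcal)$ then follows from $\Vcal\subseteq\Ccal$ (immediate from $i^!\Vcal=0$ via the triangle $i_*i^!V\to V\to j_*j^*V$) together with $\Ucal\cap\Ccal=j_*j^*\Ucal$, obtained by an extension argument on the dual unit triangle analogous to the one above. Left nondegeneracy of $(j^*\Ucal,j^*\Vcal)$ is then proved as follows: for $Y\in\bigcap_i j^*\Ucal[i]$, pick $U_i\in\Ucal[i]$ with $j^*U_i\cong Y$ for each $i$; the same extension argument shows $j_*j^*U_i\in\Ucal[i]$, so $j_*Y\in\bigcap_i\Ucal[i]=\Bcal=\ker j^*$, and thus $Y\cong j^*j_*Y=0$.

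For part (2), I would apply Theorem \ref{glueing}(1) directly to the torsion pairs $(j^*\Ucal,j^*\Vcal)$ in $\Ycal$ and $(0,\Bcal)$ in $\Bcal$, obtaining the glued torsion pair $(j_!j^*\Ucal,\;\Bcal\ast j_*j^*\Vcal)$ in $\Tcal$. Using $\Vcal\subseteq\Ccal$ from part (1), we have $j_*j^*\Vcal=\Vcal$; and $j_!j^*\Ucal=\Ucal\cap\Scal$, where one inclusion uses the extension argument from part (1) together with $\Scal=\mathrm{im}\,j_!$, and the other inclusion uses that $X\cong j_!j^*X$ for any $X\in\Scal$. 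Thus the glued torsion pair reads $(\Ucal\cap\Scal,\,\Bcal\ast\Vcal)$. Left nondegeneracy reduces to $\bigcap_i(\Ucal\cap\Scal)[i]=\bigl(\bigcap_i\Ucal[i]\bigr)\cap\Scal=\Bcal\cap\Scal=0$, where the last vanishing follows because $\Bcal=\Scal^\perp$ forces $\mathrm{id}_X=0$ for any $X\in\Bcal\cap\Scal$. The main obstacle here is mostly bookkeeping: keeping straight the four identifications $j_!j^*\Ucal=\Ucal\cap\Scal$, $j_*j^*\Vcal=\Vcal$, $i^*\Ucal=\Bcal$ and $i^!\Vcal=0$, all of which are variations on exploiting $\Bcal\subseteq\Ucal$ together with the recollement triangles.
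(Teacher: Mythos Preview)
Your proposal is correct and follows essentially the same route as the paper: both arguments verify the compatibility condition of Theorem~\ref{glueing}(2), identify the glued pieces, and check left nondegeneracy via $\Bcal\cap\Ccal=0$ (equivalently, $\Bcal=\ker j^*$). The only cosmetic differences are that the paper verifies the equivalent condition $j_*j^*\Vcal\subseteq\Vcal$ (via $\Vcal\subseteq\Ccal$) rather than your $j_!j^*\Ucal\subseteq\Ucal$, and it obtains the identifications $j_*j^*\Ucal=\Ucal\cap\Ccal$ and $j_!j^*\Ucal=\Ucal\cap\Scal$ by quoting Lemma~\ref{lem: restricts} and Theorem~\ref{glueing}(2) rather than redoing the extension argument by hand.
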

\begin{proof}
Firstly, note that indeed the equality $\bigcap_{i\in \Z}\Ucal[i] = {}^{\perp_\Z}\Vcal$ follows immediately from the fact that, for each $i$ in $\Z$, we have $\Ucal[i] = {}^\perp\Vcal[i]$. 

(1) First observe that, since $\Vcal$ is contained in $\Ccal$, we have that $j_*j^*\Vcal=\Vcal$. Therefore, by Theorem \ref{glueing}, the torsion pair $(\Ucal,\Vcal)$ is obtained by glueing $(j^*\Ucal,j^*\Vcal)$ and $(i^*\Ucal,i^!\Vcal)$. Moreover, by Lemma \ref{lem: restricts}, $(\Ucal \cap \Ccal, \Vcal)$ is a torsion pair in $\Ccal$ with torsionfree class $j_*j^*\Vcal=\Vcal$. Therefore, it must be the one corresponding under the equivalence $j_*$ between $\Ycal$ and $\Ccal$ to the torsion pair $(j^*\Ucal,j^*\Vcal)$ (i.e.~we get that $j_*j^*\Ucal=\Ucal\cap \Ccal$). Note also that $\cap_{i\in\Z}(\Ucal\cap\Ccal)[i]=\Bcal\cap\Ccal=0$ and, thus, $(j^*\Ucal,j^*\Vcal)$ is indeed left nondegenerate. Finally, observe that $i^!\Vcal=0$ since $\Ker(i^!)=\Ccal$ and $\Vcal$ is contained in $\Ccal$. Therefore we also have $i^*\Ucal=\Bcal$, completing the proof.

(2) We glue the torsion pairs  $(j^*\Ucal,j^*\Vcal)$ in $\Ycal$ and $(0,\Bcal)$ in $\Bcal$. By Theorem \ref{glueing}(1), the resulting torsion pair is $(j_!j^*\Ucal\ast 0,\Bcal\ast j_*j^*\Vcal)$. Since, as seen in Theorem \ref{glueing}(2), we have $j_!j^*\Ucal=\Ucal\cap\Scal$ and $j_*j^*\Vcal=\Vcal\cap\Ccal=\Vcal$, we get the desired torsion pair. Finally, observe that, as in (1), it is easy to see that the resulting torsion pair is left nondegenerate.
\end{proof}

Next we will consider the dual case of a definable torsionfree class. It follows from Proposition \ref{Cor: def torsf is TTF} that such a class is in fact a TTF class and, therefore, we obtain the following stronger result.

\begin{proposition}\label{Prop: constructing recoll dual}
Let $\Tcal$ be a compactly generated algebraic triangulated category and let $(\Ucal, \Vcal)$ be a torsion pair in $\Tcal$ with $\Vcal$ a definable subcategory. Consider the bireflective subcategory $\Bcal=\bigcap_{i\in \Z}\Vcal[i]={\Ucal}^{\perp_\Z}$ given by Corollary \ref{definable to bireflective} and the associated recollement 
\begin{equation}\label{def recoll}
\xymatrix@C=0.5cm{\Bcal \ar[rrr]^{i_*} &&& \Tcal \ar[rrr]^{j^*}  \ar @/_1.5pc/[lll]_{i^*}  \ar @/^1.5pc/[lll]_{i^!} &&& \Ycal\ar @/_1.5pc/[lll]_{j_!} \ar @/^1.5pc/[lll]_{j_*} }
\end{equation}
where $i_*$ is the corresponding inclusion functor into $\Tcal$. Denote by $\Scal$ and $\Ccal$ the associated smashing and cosmashing subcategories, respectively.  Then the following statements hold. 
\begin{enumerate}
\item The torsion pair $(\Ucal,\Vcal)$ in $\Tcal$ is obtained by glueing:
\begin{itemize}
\item the right nondegenerate torsion pair $(j^*\Ucal,j^*\Vcal)$ which, under the equivalence between $\Ycal$ and $\Scal$ induced by $j_!$, corresponds to the torsion pair $(\Ucal, \Vcal\cap\Scal)$ in $\Scal$;
\item the trivial torsion pair $(0,\Bcal)$ in $\Bcal$. 
\end{itemize}
\item The pair $(\Ucal\ast\Bcal,\Vcal\cap\Ccal)$ is a right nondegenerate torsion pair in $\Tcal$ obtained by glueing
\begin{itemize}
\item the right nondegenerate torsion pair $(j^*\Ucal,j^*\Vcal)$;
\item the trivial torsion pair $(\Bcal,0)$ in $\Bcal$.
\end{itemize}
\item The class $\Vcal$ can be expressed as $\Bcal\ast(\Vcal\cap\Ccal)$ and also as $(\Vcal\cap\Scal)\ast \Bcal$.
\end{enumerate}
\end{proposition}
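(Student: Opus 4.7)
The plan is to mirror the proof of Proposition \ref{Prop: constructing recoll}, exploiting the extra input that $\Vcal$, being definable and extension-closed, is in fact a TTF class by Proposition \ref{Cor: def torsf is TTF}. Before anything else I would record the basic inclusions that organise the whole argument: since $\Bcal=\bigcap_{i\in\Z}\Vcal[i]\subseteq\Vcal$, taking left perpendiculars gives $\Ucal={}^\perp\Vcal\subseteq{}^\perp\Bcal=\Scal$; moreover $\Bcal\cap\Scal=\Bcal\cap{}^\perp\Bcal=0$ and dually $\Bcal\cap\Ccal=0$.

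For part (1), I would apply Theorem \ref{glueing}(2), first checking that $j_*j^*\Vcal\subseteq\Vcal$. Given $V\in\Vcal$, the recollement triangle $i_*i^!V\to V\to j_*j^*V\to i_*i^!V[1]$ has $i_*i^!V\in\Bcal\subseteq\Vcal$ and, since $\Bcal$ is triangulated, also $i_*i^!V[1]\in\Vcal$; rotating exhibits $j_*j^*V$ as an extension of $V$ by $i_*i^!V[1]$, and extension-closure of the torsionfree class $\Vcal$ gives $j_*j^*V\in\Vcal$. Theorem \ref{glueing}(2) then identifies the second glueing datum as $(i^*\Ucal,i^!\Vcal)$: the inclusion $\Ucal\subseteq\Scal=\Ker(i^*)$ yields $i^*\Ucal=0$, while $\Bcal\subseteq\Vcal$ together with $i^!i_*\cong\mathrm{id}_\Bcal$ yields $i^!\Vcal=\Bcal$. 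The same theorem also gives $j_!j^*\Ucal=\Ucal\cap\Scal=\Ucal$, so under the equivalence $\Ycal\simeq\Scal$ induced by $j_!$ the torsion pair $(j^*\Ucal,j^*\Vcal)$ corresponds to $(\Ucal,\Vcal\cap\Scal)$ in $\Scal$. Right nondegeneracy follows at once: $\bigcap_i(\Vcal\cap\Scal)[i]=\bigl(\bigcap_i\Vcal[i]\bigr)\cap\Scal=\Bcal\cap\Scal=0$.

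For part (2), I would invoke Theorem \ref{glueing}(1) to glue $(j^*\Ucal,j^*\Vcal)$ with $(\Bcal,0)$; the resulting torsion pair is $\bigl(j_!j^*\Ucal\ast i_*\Bcal,\,i_*(0)\ast j_*j^*\Vcal\bigr)=(\Ucal\ast\Bcal,\,\Vcal\cap\Ccal)$, where the identifications $j_!j^*\Ucal=\Ucal$ and $j_*j^*\Vcal=\Vcal\cap\Ccal$ are recorded in part (1). Right nondegeneracy of its torsionfree class is again $\bigcap_i(\Vcal\cap\Ccal)[i]=\Bcal\cap\Ccal=0$.

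Part (3) I would read off the two recollement triangles applied to any $V\in\Vcal$. The triangle $i_*i^!V\to V\to j_*j^*V\to i_*i^!V[1]$ exhibits $V$ as an extension of $i_*i^!V\in\Bcal$ by $j_*j^*V\in\Vcal\cap\Ccal$, giving $\Vcal\subseteq\Bcal\ast(\Vcal\cap\Ccal)$; the reverse containment uses only $\Bcal\subseteq\Vcal$, $\Vcal\cap\Ccal\subseteq\Vcal$ and extension-closure of $\Vcal$. The analogous argument with the triangle $j_!j^*V\to V\to i_*i^*V\to j_!j^*V[1]$, this time observing by rotation and extension-closure of $\Vcal$ that $j_!j^*V\in\Vcal\cap\Scal$, yields $\Vcal=(\Vcal\cap\Scal)\ast\Bcal$. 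No single step looks genuinely hard; the only mild obstacle is to correctly read off the glueing data from Theorem \ref{glueing}(2), which is precisely why I would collect the basic inclusions $\Ucal\subseteq\Scal$ and $\Bcal\subseteq\Vcal$ at the outset.
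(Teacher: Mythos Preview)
Your argument is correct. Parts (1) and (2) are exactly the dualisation of Proposition~\ref{Prop: constructing recoll} that the paper itself invokes, so there is no real difference there.

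Part (3), however, you handle differently from the paper. The paper exploits that $\Vcal$ is a TTF class (Proposition~\ref{Cor: def torsf is TTF}): it applies Proposition~\ref{Prop: constructing recoll}(1) to the \emph{adjacent} torsion pair $(\Vcal,\Vcal^\perp)$ to obtain $j_*j^*\Vcal=\Vcal\cap\Ccal$ and to express $\Vcal$ as the torsion class $(\Vcal\cap\Scal)\ast\Bcal$ of a glued pair. You instead read both decompositions directly off the two canonical recollement triangles for $V\in\Vcal$, using only $\Bcal\subseteq\Vcal$, extension-closure of $\Vcal$, and the identification $j_*j^*\Vcal=\Vcal\cap\Ccal$ already supplied by Theorem~\ref{glueing}(2) in your part~(1). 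This is more elementary: it never uses that $\Vcal$ is a torsion class, only that it is a torsionfree class (hence extension-closed) containing the triangulated subcategory $\Bcal$. In particular, your opening remark that you will ``exploit the extra input that $\Vcal$ is a TTF class'' is misleading---nothing in your actual argument uses it, and you could delete that sentence without loss. What the paper's route buys is a more structural picture (both identities in (3) arise as instances of the glueing formula for genuine torsion pairs), whereas your route shows that (3) holds under weaker hypotheses on $\Vcal$.
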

\begin{proof}
Statements (1) and (2) are dual to Proposition \ref{Prop: constructing recoll}.  For statement (3) we use the fact that, by Proposition \ref{Cor: def torsf is TTF}, we have a torsion pair $(\Vcal,\Vcal^\perp)$. Indeed, this allows us to conclude, by Proposition \ref{Prop: constructing recoll}(1), that $j_*j^*\Vcal=\Vcal\cap\Ccal$. Using item (1) above, we then obtain that $\Vcal=\Bcal\ast(\Vcal\cap\Ccal)$ by Theorem \ref{glueing}. On the other hand, by applying Proposition \ref{Prop: constructing recoll}(1) to the torsion pair $(\Vcal,\Vcal^\perp)$ we have that it is obtained by glueing $(j^*\Vcal,j^*(\Vcal^\perp))$ and $(\Bcal,0)$.  By item (1) above, we have that $j_!j^*\Vcal = \Vcal \cap\Scal$ and so we conclude that $\Vcal = (\Vcal\cap\Scal)\ast \Bcal$ by Theorem \ref{glueing}.
\end{proof}

We end this paper by considering a special case of the above proposition where $\Vcal$ is cosuspended. This allows us to provide a structural description of t-structures with a definable coaisle.

\begin{proposition}
Let $\Tcal$ be a compactly generated algebraic triangulated category and let $(\Ucal, \Vcal)$ be a t-structure in $\Tcal$ with $\Vcal$ a definable subcategory. Consider the bireflective subcategory $\Bcal =\bigcap_{i\in \Z}\Vcal[i]$ given by Corollary \ref{definable to bireflective} and denote by $\Scal = {}^\perp\Bcal$ and $\Ccal=\Bcal^\perp$ the associated smashing and cosmashing subcategories, respectively.  Then there is a pure-injective object $C$ such that 
\begin{enumerate}
\item The smallest coaisle containing $C$ is $\Vcal_C=\Vcal\cap\Ccal$.
\item The definable coaisle $\Vcal$ can be expressed as $\Bcal\ast\Vcal_C$.
\item The heart $\Hcal$ of $(\Ucal,\Vcal)$ is equivalent to the heart of the t-structure $({}^{\perp_{\leq 0}}C,\Vcal_C)$.
\end{enumerate}
\end{proposition}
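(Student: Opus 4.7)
The plan is to build on Proposition \ref{Prop: constructing recoll dual} in an essential way, with the decomposition $\Vcal = \Bcal \ast (\Vcal \cap \Ccal)$ from part (3) of that result providing the backbone. I would start by verifying that $\Vcal \cap \Ccal$ is a definable cosuspended subcategory of $\Tcal$: it is cosuspended because $\Vcal$ is cosuspended and $\Ccal$ is triangulated; and it is definable because both $\Vcal$ and $\Ccal$ are definable (the latter by Proposition \ref{Prop: eq bireflective}(6)), so by Theorem \ref{closure for definable} their intersection is closed under products, pure subobjects and pure quotients.

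Next I would produce the pure-injective object $C$. The key external input is that any definable subcategory of a compactly generated triangulated category admits a pure-injective \emph{elementary cogenerator}, namely an object $C$ lying in the subcategory such that every object in the subcategory embeds as a pure subobject into some power $C^I$. Take such an elementary cogenerator $C$ of $\Vcal \cap \Ccal$, which is pure-injective and lies in $\Vcal \cap \Ccal$. Since $\Vcal \cap \Ccal$ is cosuspended and closed under products and pure subobjects, Proposition \ref{adjoints}(2) guarantees it is a coaisle containing $C$, so by minimality $\Vcal_C \subseteq \Vcal \cap \Ccal$. Conversely, given $V \in \Vcal \cap \Ccal$, the pure embedding $V \hookrightarrow C^I$ together with the fact that $\Vcal_C$ is closed under products and pure subobjects (by Proposition \ref{adjoints}(2) and its proof) yields $V \in \Vcal_C$. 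This gives statement (1), and statement (2) is then immediate from $\Vcal = \Bcal \ast (\Vcal \cap \Ccal) = \Bcal \ast \Vcal_C$.

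For statement (3), I would use the glueing description from Proposition \ref{Prop: constructing recoll dual}(1): the t-structure $(\Ucal,\Vcal)$ arises by glueing $(j^*\Ucal, j^*\Vcal)$ in $\Ycal$ with the trivial t-structure $(0,\Bcal)$ in $\Bcal$. The general theory of glued t-structures in the sense of \cite{BBD} ensures that whenever the component on $\Bcal$ is trivial, the heart of the glued t-structure is equivalent to the heart of the non-trivial component, so $\Hcal \simeq \textrm{heart}(j^*\Ucal, j^*\Vcal)$. Transporting along the triangle equivalence $j_* \colon \Ycal \to \Ccal$, this becomes the heart computed in $\Ccal$ of the t-structure with coaisle $j_* j^* \Vcal = \Vcal \cap \Ccal$. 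Finally, since $\Vcal_C = \Vcal \cap \Ccal \subseteq \Ccal$, the heart $({}^\perp\Vcal_C)[-1] \cap \Vcal_C$ of $({}^{\perp_{\leq 0}}C,\Vcal_C)$ computed in $\Tcal$ automatically sits in $\Ccal$ and coincides with the heart of the restricted t-structure in $\Ccal$. Chaining these equivalences yields $\Hcal \simeq$ heart of $({}^{\perp_{\leq 0}}C,\Vcal_C)$.

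The main obstacle is the construction of $C$: it rests on the availability of pure-injective elementary cogenerators for definable subcategories of compactly generated (algebraic) triangulated categories, a non-trivial input traceable to work of Krause and appearing in the derivator-theoretic context of \cite{Laking}. The secondary technical point is confirming that the heart-of-a-glueing principle of \cite{BBD} transfers verbatim from the t-structure setting described there to our algebraic setting, but this is a routine verification once the recollement in \eqref{def recoll} is in hand. All remaining steps are natural consequences of results already established in the earlier sections.
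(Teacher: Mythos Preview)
Your approach diverges from the paper's in the construction of $C$, and your argument for part~(1) contains a genuine gap.

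The paper constructs $C$ via Brown representability. Since $\Vcal$ is definable, the heart $\Hcal$ is Grothendieck (by \cite{SSV,Laking}) with an injective cogenerator $E$; as $H^0$ commutes with coproducts, the functor $\Hom_\Hcal(H^0(-),E)$ is representable by an object $C$, pure-injective by \cite{AMV3}. The explicit isomorphism $\Hom_\Tcal(-,C)\cong\Hom_\Hcal(H^0(-),E)$ is then the engine of the proof of (1): one checks $H^0(\Ucal\ast\Bcal)=0$, so $C\in(\Ucal\ast\Bcal)^\perp=\Vcal\cap\Ccal$; conversely, for $Z\in{}^{\perp_{\leq 0}}C$ one uses the recollement triangle $j_!j^*Z\to Z\to i_*i^*Z$, the identity $H^0(j_!j^*Z[k])\cong H^0(Z[k])$, and right nondegeneracy of $(\Ucal,\Vcal\cap\Scal)$ in $\Scal$ to force $j_!j^*Z\in\Ucal$ and hence $Z\in\Ucal\ast\Bcal$.

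You instead take $C$ to be an elementary cogenerator of the definable subcategory $\Vcal\cap\Ccal$. The inclusion $\Vcal_C\subseteq\Vcal\cap\Ccal$ is fine. For the reverse inclusion you assert that $\Vcal_C$ is closed under products and pure subobjects, citing Proposition~\ref{adjoints}(2). But that proposition runs in the opposite direction: it says a cosuspended subcategory with those closure properties is a coaisle, \emph{not} that the coaisle $\Vcal_C=({}^{\perp_{\leq 0}}C)^\perp$ produced by Corollary~\ref{t-structure from pure-injective} enjoys them. Coaisles are automatically closed under products, but closure under pure subobjects is precisely a definability-type condition that is not known for $\Vcal_C$ even when $C$ is pure-injective. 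Concretely, a pure monomorphism $V\hookrightarrow C^I$ gives a map $\Hom_\Tcal(Z,V)\to\Hom_\Tcal(Z,C^I)=0$ for $Z\in{}^{\perp_{\leq 0}}C$, but this map is injective only when $Z$ is pure-projective; for arbitrary $Z$ you cannot conclude $\Hom_\Tcal(Z,V)=0$. So the inclusion $\Vcal\cap\Ccal\subseteq\Vcal_C$ is not established, and (1) does not follow. The paper's representability description of $C$ is exactly what lets one bypass this obstacle by computing ${}^{\perp_{\leq 0}}C$ directly via $H^0$.

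Your treatments of (2) and (3) match the paper's and would go through once (1) is in place.
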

\begin{proof}
Since $\Vcal$ is definable we have that, by \cite[Theorem C]{SSV} and \cite[Theorem 3.11]{Laking}, the heart $\Hcal=\Ucal[-1]\cap\Vcal$ of $(\Ucal,\Vcal)$ is a Grothendieck abelian category. In particular, $\Hcal$ has an injective cogenerator $E$. Since $\Vcal$ is closed under coproducts, it follow that the cohomological functor $H^0\colon\Tcal\longrightarrow \Hcal$ commutes with coproducts (\cite[Lemma 3.3]{AMV3}). Using Brown representability as in \cite{NSZ}, we then conclude that there is an object $C$ representing the functor $\Hom_\Hcal(H^0(-),E)$ i.e., 
$$\Hom_\Tcal(-,C)\cong \Hom_\Hcal(H^0(-),E).$$
Moreover, by the proof of \cite[Theorem 3.6]{AMV3}, we have that $C$ is pure-injective.

(1) By Proposition \ref{Prop: constructing recoll dual}(2), we have a t-structure $(\Ucal\ast\Bcal,\Vcal\cap\Ccal)$ in $\Tcal$. We will show that this t-structure coincides with the t-structure $({}^{\perp_{\leq 0}}C,\Vcal_C)$ given by Corollary \ref{t-structure from pure-injective}. To show that $\Vcal_C$ is contained in $\Vcal\cap\Ccal$, it suffices to show that $C$ lies in $\Vcal\cap\Ccal$. By construction of the functor $H^0$ (see, for example, \cite{BBD}), it follows that $H^0(\Ucal)=0$ and $H^0(\Vcal[k])=0$ for all $k<0$. In particular, we have that $H^0(\Bcal)=0$ and, since $H^0$ is a cohomological functor, it follows that $H^0(\Ucal\ast\Bcal)=0$. This implies that, by construction of $C$,  we have that $C$ lies in $(\Ucal\ast\Bcal)^{\perp}=\Vcal\cap\Ccal$. 

For the reverse inclusion, let $Z$ be an object in ${}^{\perp_{\leq 0}}C$, and consider the triangle arising from the recollement (\ref{def recoll})
$$j_!j^*Z\longrightarrow Z\longrightarrow i_*i^!Z\longrightarrow j_!j^*Z[1].$$
Applying the cohomological functor $H^0$ to the triangle, we conclude that $H^0(j_!j^*Z[k])\cong H^0(Z[k])$ for all $k$ in $\Zbb$ because $H^0(\Bcal)=0$. Moreover, we have that $H^0(Z[k])=0$ for all $k\geq 0$ by definition of $C$ and choice of $Z$. Finally, note that by Proposition \ref{Prop: constructing recoll dual}(1), the functor $j_!$ identifies the right nondegenerate t-structure $(j^*\Ucal,j^*\Vcal)$ with $(\Ucal,\Vcal\cap\Scal)$ in $\Scal$.  Since $j_!j^*Z$ lies in $\Scal$, the nondegeneracy guarantees that $j_!j^*Z$ lies in $\Ucal$ if and only if $H^0(j_!j^*[k])=0$ for all $k\geq 0$ (see, for example, \cite{BBD}). Thus we have that, indeed, $j_!j^*Z$ lies in $\Ucal$ and $Z$ lies in $\Ucal\ast\Bcal$. This proves that ${}^{\perp_{\leq 0}}C\subseteq \Ucal\ast\Bcal$ and, hence, $\Vcal_C\supseteq \Vcal\cap\Ccal$.

(2) This follows by using (1) together with Proposition \ref{Prop: constructing recoll dual}(3).

(3) Recall that, by Proposition \ref{Prop: constructing recoll dual}(2), the t-structure $(\Ucal,\Vcal)$ can be obtained by glueing the t-structures $(0,\Bcal)$ and $(j^*\Ucal,j^*\Vcal)$. It then follows from \cite{BBD} (see also \cite[Section 4]{PsSurvey} for details) that the functor $j^*|_{\Hcal}\colon \Hcal\longrightarrow j^*\Hcal$ is a Serre quotient functor inducing a recollement of abelian categories and that, moreover, $\Ker(j^*|_{\Hcal})$ is the heart of the t-structure $(0,\Bcal)$ (which is zero). Therefore, 
$j^*|_{\Hcal}$ is indeed an equivalence of categories and since $(j_*\Ucal,j^*\Vcal)$ identifies with $(\Ucal\ast\Bcal,\Vcal\cap\Ccal)$ via the functor $j_*$ (see Proposition \ref{Prop: constructing recoll dual}(3)), the result follows by (1) above.
\end{proof}

\begin{remark}
Note that, in \cite{NSZ}, the pure-injective object $C$ obtained in the above proposition is known as a partial cosilting object.
\end{remark}

\end{document}